\newcommand\blfootnote[1]{%
	\begingroup
	\renewcommand\thefootnote{}\footnote{#1}%
	\addtocounter{footnote}{-1}%
	\endgroup}
\theoremstyle{plain}
\newtheorem{theorem}{Theorem}[section]
\newtheorem{proposition}[theorem]{Proposition}
\newtheorem{lemma}[theorem]{Lemma}
\newtheorem{corollary}[theorem]{Corollary}
\theoremstyle{definition}
\newtheorem{definition}[theorem]{Definition}
\newtheorem{remark}[theorem]{Remark}
\newtheorem{example}[theorem]{Example}
\newtheorem{question}[theorem]{Question}
\newtheorem{problem}[theorem]{Problem}
\numberwithin{equation}{section}
\newcommand{\NN}{\mathbb{N}}
\newcommand{\II}{\mathbb{I}}
\newcommand{\Ac}{\mathcal{A}}
\newcommand{\Bc}{\mathcal{B}}
\newcommand{\Cc}{\mathcal{C}}
\newcommand{\Fc}{\mathcal{F}}
\newcommand{\Kc}{\mathcal{K}}
\newcommand{\Pc}{\mathcal{P}}
\newcommand{\Tc}{\mathcal{T}}
\newcommand{\dsup}{\overline{\operatorname{dens}}}
\newcommand{\dinf}{\underline{\operatorname{dens}}}
\newcommand{\cl}{\overline}
\newcommand{\diam}{\operatorname{diam}}
\newcommand{\orb}{\mathcal{O}}
\newcommand{\lspan}{\operatorname{span}}
\newcommand{\eps}{\varepsilon}
\newcommand{\Prox}{\operatorname{Prox}}
\newcommand{\D}[1]{\text{D}#1$\tfrac{\text{1}}{\text{2}}$}
\newcommand{\DC}[1]{\text{DC}#1$\tfrac{\text{1}}{\text{2}}$}
\newcommand{\com}{\overline}
\newcommand{\fuz}{\hat}
\newcommand{\eend}{\operatorname{end}}
\newcommand{\send}{\operatorname{send}}
\begin{document}
\begin{center}
	\begin{LARGE}
		{\bf Li-Yorke chaos on fuzzy dynamical systems}
	\end{LARGE}
\end{center}


\begin{center}
	\begin{Large}
		by
	\end{Large}
\end{center}


\begin{center}
	\begin{Large}
		Illych \'Alvarez \& Antoni L\'opez-Mart\'inez\blfootnote{\textbf{2020 Mathematics Subject Classification}: 37B02, 37B05, 37B25, 54A40, 54B20.\\ \textbf{Key words and phrases}: Topological dynamics, Fuzzy dynamical systems, Li-Yorke chaos, Proximality, Sensitivity.\\ \textbf{Journal-ref}: Information Sciences, Volume 749, article number 123527, (2026).\\ \textbf{DOI}: \href{https://doi.org/10.1016/j.ins.2026.123527}{https://doi.org/10.1016/j.ins.2026.123527}}
	\end{Large}
\end{center}

\vspace*{0.2cm}

\begin{abstract}
	Given a dynamical system $(X,f)$ we investigate how several variants of Li-Yorke chaos behave with respect to the extended systems $(\mathcal{K}(X),\overline{f})$ and $(\mathcal{F}(X),\hat{f})$, where~$\overline{f}$ is the hyperextension of~$f$ acting on the space $\mathcal{K}(X)$ of non-empty compact subsets of $X$, and where~$\hat{f}$ denotes the Zadeh extension of~$f$ acting on the space $\mathcal{F}(X)$ of normal fuzzy subsets of $X$. We first prove that the main variants of Li-Yorke chaos transfer from $(X,f)$ to $(\mathcal{K}(X),\overline{f})$ and from $(\mathcal{K}(X),\overline{f})$~to~$(\mathcal{F}(X),\hat{f})$, but that the converse implications do not hold in general. However, combining the notions of proximality and sensitivity we introduce Cantor-dense Li-Yorke chaos, and we prove that this strengthened variant of chaos does transfer from $(\mathcal{F}(X),\hat{f})$ to $(\mathcal{K}(X),\overline{f})$ under natural assumptions.
\end{abstract}

\vspace*{0.2in}

\section{Introduction}

A \textit{dynamical system} is a pair $(X,f)$ formed by a continuous map $f \colon X \longrightarrow X$ acting on a topological space $(X,\tau)$, usually called the \textit{phase space}. The study of dynamical systems is fundamental in many real-world contexts, since any process that evolves over time can be modelled as a dynamical system: given an initial state of the system $x_0 \in X$ one can study its \textit{$f$-orbit}
\[
\mathcal{O}_f(x_0) := \{ x_0, f(x_0), f^2(x_0), \dots \} = \{ f^n(x_0) \ ; \ n \geq 0 \},
\]
where $f^n(x_0)$ represents the state of the system after $n$ time steps. In order to quantify deviations or distances between states, it is common to assume that $(X,\tau)$ is endowed with a metric $d$, turning it into a metric space $(X,d)$. However, in many realistic situations, the exact state of a system may not be fully determined or accessible. In these cases, it is natural to study the evolution of \textit{sets} or \textit{fuzzy sets} rather than individual points, being this (fuzzy) sets a usually good enough representation of statistical possible states of the system (see \cite{Kupka2011_IS_on}). This perspective, often referred to as \textit{collective dynamics}, emphasizes the interplay between the \textit{individual dynamics} of $(X,f)$ and the induced dynamics on the hyperspaces of non-empty compact sets $(\Kc(X),\com{f})$ and fuzzy sets $(\Fc(X),\fuz{f})$. Understanding how dynamical properties transfer from the base system to its hyperspaces extensions is a question of both theoretical and potential applied relevance. In fact, motivated by recent developments at the interface between dynamical systems, optimisation, and control theory (see for instance \cite{SusantoSuHa2025_RCO_LQR}, where a fuzzy hyperspace viewpoint relied on previous theoretical results such as those in \cite{AlvarezLoPe2025_FSS_recurrence}), in this paper we focus on providing a rigorous theoretical framework for the dynamical behaviour of the systems $(X,f)$, $(\Kc(X),\com{f})$ and $(\Fc(X),\fuz{f})$ with respect to Li-Yorke chaotic properties. While our work is primarily theoretical, we anticipate that it may inform future applications, as has already occurred with \cite{AlvarezLoPe2025_FSS_recurrence}.

As we were advancing in the previous paragraph, our main interest in this paper lies in trying to understand the interplay between the Li-Yorke chaotic-type properties exhibited by $(X,f)$ and those presented by its induced extensions, namely $(\Kc(X),\com{f})$ and $(\Fc(X),\fuz{f})$. Here, $\com{f}$ denotes the usual {\em hyperextension} of $f$, acting on the hyperspace $\Kc(X)$ of non-empty compact subsets of $X$, while $\fuz{f}$ stands for the so-called {\em Zadeh extension} (or {\em fuzzification}) of $f$, acting on the space $\Fc(X)$ of normal fuzzy sets of $X$. Given a metric space $(X,d)$ we will endow the hyperspace $\Kc(X)$ with the usual {\em Hausdorff metric}~$d_H$, while~$\Fc(X)$ will be equipped with four distinguished metrics: the {\em supremum metric}~$d_{\infty}$, the {\em Skorokhod metric}~$d_{0}$, the {\em sendograph metric}~$d_{S}$ and the {\em endograph metric}~$d_{E}$. For each $\rho \in \{ d_{\infty}, d_{0}, d_{S}, d_{E} \}$ we will denote the corresponding metric space $(\Fc(X),\rho)$ by $\Fc_{\infty}(X)$, $\Fc_{0}(X)$, $\Fc_{S}(X)$ and $\Fc_{E}(X)$, respectively. Using this fuzzy framework, the second author of this paper proved in his recent works \cite{Lopez2026_IJFS_topological-I,Lopez2026_JIA_topological-II} that:
\begin{enumerate}[--]
	\item If $\Pc$ denotes any of the dynamical notions of {\em topological $\Ac$-transitivity}, {\em topological $(\ell,\Ac)$-recurrence}, {\em Devaney chaos} or the {\em specification property}, then \textbf{all} the dynamical systems $(\Kc(X),\com{f})$, $(\Fc_{\infty}(X),\fuz{f})$, $(\Fc_{0}(X),\fuz{f})$, $(\Fc_{S}(X),\fuz{f})$, and $(\Fc_{E}(X),\fuz{f})$ exhibit $\Pc$ whenever \textbf{any} of them does (see \cite{Lopez2026_IJFS_topological-I}).
	
	\item However, for the so-called {\em contractive}, {\em expansive}, {\em expanding}, {\em positively expansive}, {\em chain recurrence} and the {\em shadowing} properties, it was proved that $(\Kc(X),\com{f})$ exhibits each of these properties if and only if so does $(\Fc_{\infty}(X),\fuz{f})$, but that extremely radical behaviours are presented by $(\Fc_{E}(X),\fuz{f})$, and sometimes by $(\Fc_{0}(X),\fuz{f})$ and $(\Fc_{S}(X),\fuz{f})$, with respect to these dynamical notions (see \cite{Lopez2026_JIA_topological-II}).
\end{enumerate}
Both papers \cite{Lopez2026_IJFS_topological-I,Lopez2026_JIA_topological-II} were motivated by the references \cite{Kupka2011_IS_on} and \cite{AlvarezLoPe2025_FSS_recurrence,BartollMaPeRo2022_AXI_orbit,JardonSan2021_FSS_expansive,JardonSan2021_IJFS_sensitivity,JardonSanSan2020_FSS_some,JardonSanSan2020_MAT_transitivity,MartinezPeRo2021_MAT_chaos}, where the aforementioned dynamical properties were initially studied for fuzzy systems. Since the notions of Li-Yorke and uniform distributional chaos were considered in \cite{MartinezPeRo2021_MAT_chaos} but not addressed in \cite{Lopez2026_IJFS_topological-I,Lopez2026_JIA_topological-II}, the aim of this paper is to investigate the main variants of Li-Yorke chaos for the system $(\Fc(X),\fuz{f})$. As we shall see throughout this paper, although $(\Kc(X),\com{f})$ and $(\Fc_{\infty}(X),\fuz{f})$ behave in parallel with respect to all the properties studied in \cite{Lopez2026_IJFS_topological-I,Lopez2026_JIA_topological-II}, they differ when it comes to Li-Yorke-type chaotic properties. This observation explains why Li-Yorke chaos and its variants were not considered in \cite{Lopez2026_IJFS_topological-I,Lopez2026_JIA_topological-II} and, for completeness, we provide a summary of the theory developed in those papers together with the results of the present work in Section~\ref{Sec_6:conclusions}.

Focusing on Li-Yorke-type chaotic properties, which were originally studied in \cite{MartinezPeRo2021_MAT_chaos} in the context of fuzzy dynamical systems, we must start by mentioning that in \cite[Proposition~3]{MartinezPeRo2021_MAT_chaos} it was shown that the notions of Li-Yorke and uniform distributional chaos transfer from $(X,f)$ to $(\Kc(X),\com{f})$, but also from $(\Kc(X),\com{f})$ to the extensions $(\Fc_{\infty}(X),\fuz{f})$ and $(\Fc_{0}(X),\fuz{f})$. For the reader's convenience, we recall the precise result established there:

\begin{proposition}[\textbf{\cite[Proposition~3]{MartinezPeRo2021_MAT_chaos}}]
	Let $f$ be a continuous map on a metric space $X$. Then:
	\begin{enumerate}[{\em(i)}]
		\item If there exists a ($\eps$-distributionally) scrambled set $S$ for $f$, then there exist ($\eps$-distributionally) scrambled sets $\com{S}$ and $\fuz{S}$ for $\com{f}$ and $\fuz{f}$, respectively, with the same cardinality as $S$.
		
		\item If there exists a ($\eps$-distributionally) scrambled set $\com{S}$ for $\com{f}$, then there exists a ($\eps$-distributionally) scrambled set $\fuz{S}$ for $\fuz{f}$ with the same cardinality as $\com{S}$.
		
		\item If the map $f$ is Li-Yorke (uniformly distributionally) chaotic on $X$, then $\com{f}$ is Li-Yorke (uniformly distributionally) chaotic on the hyperspace $\Kc(X)$.
		
		\item If the map $\com{f}$ is Li-Yorke (uniformly distributionally) chaotic on the hyperspace $\Kc(X)$, then $\fuz{f}$ is Li-Yorke (uniformly distributionally) chaotic on the spaces of fuzzy sets $\Fc_{\infty}(X)$ and on $\Fc_{0}(X)$.
	\end{enumerate}
\end{proposition}

In addition, the following question was indirectly posed in \cite[Remark~1]{MartinezPeRo2021_MAT_chaos}:

\begin{question}\label{Ques:Li-Yorke.transfer}
	Is there a dynamical system $(X,f)$ for which $(\Fc_{\infty}(X),\fuz{f})$ or $(\Fc_{0}(X),\fuz{f})$ are Li-Yorke or uniformly distributionally chaotic but such that $(\Kc(X),\com{f})$ and hence $(X,f)$ are not?
\end{question}

Although in \cite{MartinezPeRo2021_MAT_chaos} the authors considered only two types of Li-Yorke chaos (the classical one and uniform distributional chaos), many other notions of this kind exist. Moreover, the metrics $d_{S}$ and $d_{E}$, and consequently the associated dynamical systems $(\Fc_{S}(X),\fuz{f})$ and $(\Fc_{E}(X),\fuz{f})$, were not addressed in~\cite{MartinezPeRo2021_MAT_chaos}. Motivated by these facts, in this paper we extend \cite[Proposition~3]{MartinezPeRo2021_MAT_chaos} to mean Li-Yorke chaos and to several variants of distributional chaos (namely, DC1, \DC{1}, DC2, \DC{2}, and DC3). We consider all the metrics $d_{\infty}$, $d_{0}$, $d_{S}$, and $d_{E}$; see Theorem~\ref{The:scrambled} below. 

In this paper, we also provide a more sophisticated extension of \cite[Proposition~3]{MartinezPeRo2021_MAT_chaos}, showing that the extended fuzzy system $(\Fc(X),\fuz{f})$ may exhibit very strong Li-Yorke-type chaotic properties under extremely weak assumptions on $(X,f)$; see Theorem~\ref{The:pairs}. As an application, we obtain a strong positive answer to Question~\ref{Ques:Li-Yorke.transfer}; see Example~\ref{Exa_1:main}. Furthermore, by studying the notions of proximality and sensitivity (to which Section~\ref{Sec_4:sensitivity} is devoted), we establish several new results concerning what we call \emph{Cantor-dense Li-Yorke chaos}; see Theorems~\ref{The:cantor} and~\ref{The:operators}.

The paper is organized as follows. In Section~\ref{Sec_2:notation} we introduce the variants of Li-Yorke chaos that we consider, together with the necessary background on fuzzy sets. In Section~\ref{Sec_3:Li-Yorke} we present our main results, extending \cite[Proposition~3]{MartinezPeRo2021_MAT_chaos} in two directions (see Theorems~\ref{The:scrambled} and~\ref{The:pairs}) and addressing questions of the type posed in Question~\ref{Ques:Li-Yorke.transfer} (see Examples~\ref{Exa_1:main},~\ref{Exa_2:E_MLYC}, and~\ref{Exa_3:E_DC3}). In Section~\ref{Sec_4:sensitivity} we analyze the notions of proximality and sensitivity. These are then used in Section~\ref{Sec_5:Cantor} to show that Question~\ref{Ques:Li-Yorke.transfer} admits a negative answer for \emph{Cantor-dense Li-Yorke chaos} on complete metric spaces and for certain linear operators (see Theorems~\ref{The:cantor} and~\ref{The:operators}). Finally, we conclude with a section summarising the main contributions of the paper and outlining possible directions for future research (see Section~\ref{Sec_6:conclusions}).

\section{General background and some key lemmas}\label{Sec_2:notation}

In this paper we combine the theory of dynamical systems with that of hyperspaces of compact and fuzzy sets. In this section, we start by recalling the definition of the main variants of Li-Yorke chaos, namely mean Li-Yorke and distributional chaos. Then we recall the definitions of $\Kc(X)$ and $\Fc(X)$, of the maps $\com{f}$ and $\fuz{f}$, and of the several metrics that we consider on such spaces. From now on let~$\NN$ be the set of strictly positive integers, set $\NN_0 := \NN \cup \{0\}$, and let $\II$ be the unit interval $[0,1]$.

\subsection{Li-Yorke chaos and its main variants}\label{SubSec_2.1:LY.MLY.DC}

The initial concept of chaos for dynamical systems was introduced by Li and Yorke in~\cite{LiYorke1975_AMM_period}, where they considered the next objects with respect to a continuous map $f:X\longrightarrow X$ on a metric space $(X,d)$:
\begin{enumerate}[--]
	\item A pair $(x,y) \in X\times X$ is said to be a {\em Li-Yorke pair} ({\em LY pair} for short), for the map $f$ and the metric~$d$, if there exists some $\eps>0$ such that
	\[
	\limsup_{n\to\infty} d(f^n(x),f^n(y)) \geq \eps \quad \text{ while } \quad \liminf_{n\to\infty} d(f^n(x),f^n(y)) = 0.
	\]
	In this case one also says that $(x,y)$ is a {\em Li-Yorke $\eps$-pair} ({\em LY $\eps$-pair} for short).
	
	\item A subset $S \subset X$ is said to be {\em LY scrambled} (resp.\ {\em LY $\eps$-scrambled}), for the map $f$ and the metric~$d$ (and resp.\ for some $\eps>0$), if for every pair of distinct points $x\neq y$ with $x,y\in S$ we have that the pair $(x,y)$ is a LY pair (resp.\ LY $\eps$-pair) for the map $f$ and the metric $d$ (and resp.\ for $\eps>0$).
	
	\item A dynamical system $(X,f)$ is said to be (resp.\ {\em uniformly}) {\em Li-Yorke chaotic}, for the metric $d$, if there exists an uncountable set $S \subset X$ that is LY scrambled (resp.\ LY $\eps$-scrambled) for the map~$f$ and the metric~$d$ (and resp.\ for some $\eps>0$). For short, we will write that $(X,f)$ is {\em LYC} (resp.\ {\em U-LYC}).
\end{enumerate}
Several alternative but closely related notions of chaos have been introduced to capture the complexity of dynamical systems from different points of view. For instance we have the concept of mean Li-Yorke chaos, which, although studied earlier under different names, was formally coined in~\cite{HuangLiYe2014_JFA_stable}:
\begin{enumerate}[--]
	\item A pair $(x,y) \in X\times X$ is said to be a {\em mean Li-Yorke pair} ({\em MLY pair} for short), for the map $f$ and the metric~$d$, if there exists some $\eps>0$ such that
	\[
	\limsup_{n\to\infty} \frac{1}{n} \sum_{j=1}^n d(f^j(x),f^j(y)) \geq \eps \quad \text{ while } \quad \liminf_{n\to\infty} \frac{1}{n} \sum_{j=1}^n d(f^j(x),f^j(y)) = 0.
	\]
	In this case one also says that $(x,y)$ is a {\em mean Li-Yorke $\eps$-pair} ({\em MLY $\eps$-pair} for short).
	
	\item A subset $S \subset X$ is said to be {\em MLY scrambled} (resp.\ {\em MLY $\eps$-scrambled}), for the map $f$ and the metric~$d$ (and resp.\ for some $\eps>0$), if for every pair of distinct points $x\neq y$ with $x,y\in S$ we have that $(x,y)$ is a MLY pair (resp.\ MLY $\eps$-pair) for the map $f$ and the metric $d$ (and resp.\ for $\eps>0$).

	\item A system $(X,f)$ is said to be (resp.\ {\em uniformly}) {\em mean Li-Yorke chaotic}, for the metric~$d$, if there exists an uncountable set $S \subset X$ that is MLY scrambled (resp.\ MLY $\eps$-scrambled) for~$f$ and the metric~$d$ (and resp.\ for some $\eps>0$). For short, we will write that $(X,f)$ is {\em MLYC} (resp.\ {\em U-MLYC}).
\end{enumerate}
It is not hard to check that every MLY $\eps$-pair is a LY $\eps$-pair, so that MLYC implies LYC and the same holds between their uniform variants. Another generalization of LYC is that of distributional chaos, whose type~1 version was introduced by Schweizer and Sm\'ital~\cite{SchweizerS1994_TAMS_measures} to characterize maps with positive topological entropy. In \cite{SmitalS2004_CSF_distributional} and \cite{BalibreaSS2005_CSF_the-three} distributional chaos of types~2~and~3 were introduced, and in~\cite{DolezelovaRothRoth2016_IJBC_on}~and~\cite{Hantakova2017_IJBC_iteration} two intermediate notions were proposed (types~1$\tfrac{1}{2}$ and~2$\tfrac{1}{2}$). To define them, let us recall that the {\em lower} and {\em upper densities} of a subset $A \subset \NN$ are defined as the quantities
\[
\dinf(A) := \liminf_{n\to\infty} \frac{\#(A \cap [1,n])}{n} \quad \text{ and } \quad \dsup(A) := \limsup_{n\to\infty} \frac{\#(A \cap [1,n])}{n}.
\]
Following the above references, given a continuous map $f:X\longrightarrow X$ on a metric space $(X,d)$ and a pair $(x,y)\in X\times X$, the {\em lower distributional function} generated by $f$ for the pair $(x,y)$ is the function
\[
\Phi_{(x,y)}(\delta) := \dinf \bigl(\{ j \in \NN \ ; \ d(f^j(x),f^j(y))<\delta \}\bigr) \quad \text{ for each } \delta>0,
\]
and the {\em upper distributional function} generated by $f$ for the pair $(x,y)$ is the function
\[
\Phi^*_{(x,y)}(\delta) := \dsup \bigl(\{ j \in \NN \ ; \ d(f^j(x),f^j(y))<\delta \}\bigr) \quad \text{ for each } \delta>0.
\]
With these notations, following \cite{SchweizerS1994_TAMS_measures,DolezelovaRothRoth2016_IJBC_on,Hantakova2017_IJBC_iteration,JiangLi2025_JMAA_chaos}:
\begin{enumerate}[--]
	\item A pair $(x,y) \in X\times X$ is said to be a {\em distributional pair of type 1} ({\em D1 pair} for short), for the map~$f$ and the metric~$d$, if there exists some $\eps>0$ such that $\Phi_{(x,y)}(\eps)=0$ while $\Phi^*_{(x,y)}(\delta)=1$ for all $\delta>0$. In this case one also says that $(x,y)$ is a {\em distributional $\eps$-pair of type 1} ({\em D1 $\eps$-pair} for short).
	
	\item A subset $S \subset X$ is said to be {\em D1 scrambled} (resp.\ {\em D1 $\eps$-scrambled}), for the map $f$ and the metric~$d$ (and resp.\ for some $\eps>0$), if for every pair of distinct points $x\neq y$ with $x,y\in S$ we have that the pair $(x,y)$ is a D1 pair (resp.\ D1 $\eps$-pair) for the map $f$ and the metric $d$ (and resp.\ for $\eps>0$).
	
	\item A system $(X,f)$ is said to be (resp.\ {\em uniformly}) {\em distributionally chaotic of type 1}, for the metric~$d$, if there exists an uncountable set $S \subset X$ that is D1 scrambled (resp.\ D1 $\eps$-scrambled) for~$f$ and the metric~$d$ (and resp.\ for some $\eps>0$). For short, we will write that $(X,f)$ is {\em DC1} (resp.\ {\em U-DC1}).
\end{enumerate}
For type 1$\tfrac{\text{1}}{\text{2}}$, as far as we know, the uniform version that we are about to define has not been considered in the literature. Nevertheless, following \cite{DolezelovaRothRoth2016_IJBC_on,Hantakova2017_IJBC_iteration}, we have that:
\begin{enumerate}[--]
	\item A pair $(x,y) \in X\times X$ is said to be a {\em distributional pair of type 1$\tfrac{\text{1}}{\text{2}}$} ({\em \D{1} pair} for short), for the map~$f$ and the metric~$d$, if $\lim_{\delta\to0^+} \Phi_{(x,y)}(\delta)=0$ while $\Phi^*_{(x,y)}(\delta)=1$ for all $\delta>0$.
	
	\item A subset $S \subset X$ is said to be {\em \D{1} scrambled}, for the map $f$ and the metric~$d$, if for every pair of distinct points $x\neq y$ with $x,y\in S$ we have that the pair $(x,y)$ is a \D{1} pair for the map $f$ and the metric $d$. In addition, we will say that the set $S$ is {\em \D{1} U-scrambled} if it is \D{1} scrambled and
	\[
	\lim_{\delta\to0^+} \sup\{ \Phi_{(x,y)}(\delta) \ ; \ x,y \in S \text{ with } x\neq y \} = 0.
	\]
	
	\item A system $(X,f)$ is said to be (resp.\ {\em uniformly}) {\em distributionally chaotic of type 1$\tfrac{\text{1}}{\text{2}}$}, for the metric~$d$, if there exists an uncountable set $S \subset X$ that is \D{1} scrambled (resp.\ \D{1} U-scrambled) for the map~$f$ and the metric~$d$. For short, we will write that $(X,f)$ is {\em \DC{1}} (resp.\ {\em U-\DC{1}}).
\end{enumerate}
For type 2, following \cite{SmitalS2004_CSF_distributional,DolezelovaRothRoth2016_IJBC_on,Hantakova2017_IJBC_iteration,JiangLi2025_JMAA_chaos}:
\begin{enumerate}[--]
	\item A pair $(x,y) \in X\times X$ is said to be a {\em distributional pair of type 2} ({\em D2 pair} for short), for~$f$ and the metric~$d$, if there exists some $0<\eps\leq 1$ such that $\Phi_{(x,y)}(\eps) \leq 1-\eps$ while $\Phi^*_{(x,y)}(\delta)=1$ for all $\delta>0$. In this case one also says that $(x,y)$ is a {\em distributional $\eps$-pair of type 2} ({\em D2 $\eps$-pair} for short).
	
	\item A subset $S \subset X$ is said to be {\em D2 scrambled} (resp.\ {\em D2 $\eps$-scrambled}), for the map $f$ and the metric~$d$ (and resp.\ for some $0<\eps\leq 1$), if for every pair of distinct points $x\neq y$ with $x,y\in S$ we have that the pair $(x,y)$ is a D2 pair (resp.\ D2 $\eps$-pair) for~$f$ and the metric $d$ (and resp.\ for $0<\eps\leq 1$).
	
	\item A system $(X,f)$ is said to be (resp.\ {\em uniformly}) {\em distributionally chaotic of type 2}, for the metric~$d$, if there exists an uncountable set $S \subset X$ that is D2 scrambled (resp.\ D2 $\eps$-scrambled) for~$f$ and the metric~$d$ (and resp.\ for some $\eps>0$). For short, we will write that $(X,f)$ is {\em DC2} (resp.\ {\em U-DC2}).
\end{enumerate}
For type 2$\tfrac{\text{1}}{\text{2}}$, as in the case of type~1$\tfrac{1}{2}$, and as far as we know, the uniform version that we are about to define has not been considered in the literature. Nevertheless, following \cite{DolezelovaRothRoth2016_IJBC_on,Hantakova2017_IJBC_iteration}, we have that:
\begin{enumerate}[--]
	\item A pair $(x,y) \in X\times X$ is said to be a {\em distributional pair of type 2$\tfrac{\text{1}}{\text{2}}$} ({\em \D{2} pair} for short), for the map~$f$ and the metric~$d$, if there exist two positive values $\eps,c>0$ such that
	\[
	\Phi_{(x,y)}(\delta) < c < \Phi^*_{(x,y)}(\delta) \quad \text{ whenever } 0 < \delta \leq \eps.
	\]
	In this case we will say that $(x,y)$ is a {\em distributional $(\eps,c)$-pair of type 2$\tfrac{\text{1}}{\text{2}}$} ({\em \D{2} $(\eps,c)$-pair} for short).
	
	\item A subset $S \subset X$ is said to be {\em \D{2} scrambled} (resp.\ {\em \D{2} $(\eps,c)$-scrambled}), for~$f$ and the metric~$d$ (and resp.\ for some $\eps,c>0$), if for every pair of distinct points $x\neq y$ with $x,y\in S$ we have that $(x,y)$ is a \D{2} pair (resp.\ \D{2} $(\eps,c)$-pair) for the map $f$ and the metric $d$ (and resp.\ for $\eps,c>0$).
	
	\item A system $(X,f)$ is said to be (resp.\ {\em uniformly}) {\em distributionally chaotic of type 2$\tfrac{\text{1}}{\text{2}}$}, for the metric~$d$, if there exists an uncountable set $S \subset X$ that is \D{2} scrambled (resp.\ \D{2} $(\eps,c)$-scrambled) for~$f$ and~$d$ (and resp.\ for some $\eps,c>0$). For short, we will write that $(X,f)$ is {\em \DC{2}} (resp.\ {\em U-\DC{2}}).
\end{enumerate}
For type 3, following \cite{BalibreaSS2005_CSF_the-three,DolezelovaRothRoth2016_IJBC_on,Hantakova2017_IJBC_iteration}:
\begin{enumerate}[--]
	\item A pair $(x,y) \in X\times X$ is said to be a {\em distributional pair of type 3} ({\em D3 pair} for short), for the map~$f$ and the metric~$d$, if there exist two non-negative numbers $0\leq a < b$ such that
	\[
	\Phi_{(x,y)}(\delta) < \Phi^*_{(x,y)}(\delta) \quad \text{ whenever } a < \delta < b.
	\]
	In this case we will say that $(x,y)$ is a {\em distributional $(a,b)$-pair of type 3} ({\em D3 $(a,b)$-pair} for short).
	
	\item A subset $S \subset X$ is said to be {\em D3 scrambled} (resp.\ {\em D3 $(a,b)$-scrambled}), for the map~$f$ and the metric~$d$ (and resp.\ for some $0\leq a<b$), if for every pair of distinct points $x\neq y$ with $x,y\in S$ we have that $(x,y)$ is a D3 pair (resp.\ D3 $(a,b)$-pair) for~$f$ and the metric $d$ (and resp.\ for $0\leq a<b$).
	
	\item A system $(X,f)$ is said to be (resp.\ {\em uniformly}) {\em distributionally chaotic of type 3}, for the metric~$d$, if there exists an uncountable set $S \subset X$ that is D3 scrambled (resp.\ D3 $(a,b)$-scrambled) for~$f$ and~$d$ (and resp.\ for some $0\leq a<b$). For short, we will write that $(X,f)$ is {\em DC3} (resp.\ {\em U-DC3}).
\end{enumerate}
It is well-known that DC1 $\Rightarrow$ \DC{1} $\Rightarrow$ DC2 $\Rightarrow$ \DC{2} $\Rightarrow$ DC3, that \DC{2} $\Rightarrow$ LYC, but that~DC3 and~LYC do not imply each other (see \cite{DolezelovaRothRoth2016_IJBC_on}). The same holds for the uniform variants. In addition, it was observed in~\cite{Downarowicz2014_PAMS_positive} that~MLYC is equivalent to DC2 whenever the underlying metric space $(X,d)$ is bounded (see also \cite[Remark~5.5]{JiangLi2025_JMAA_chaos}). The following is a modest extension of such an observation:

\begin{lemma}\label{Lem:MLYC<->DC2}
	Let $f:X\longrightarrow X$ be a continuous map acting on a metric space $(X,d)$, and let $x,y \in X$ be a pair of points fulfilling that $\{ d(f^j(x),f^j(x)) \ ; \ j \in \NN \}$ is bounded by some $r>0$. Hence:
	\begin{enumerate}[{\em(a)}]
		\item If $(x,y)$ is a MLY $\eps$-pair for some $\eps>0$, then $(x,y)$ is a D2 $\tfrac{\eps}{r+1}$-pair.
		
		\item If $(x,y)$ is a D2 $\eps$-pair for some $0<\eps\leq 1$, then $(x,y)$ is a MLY $\eps^2$-pair.
	\end{enumerate}
	In particular, we have that $(x,y)$ is a MLY pair if and only if $(x,y)$ is a D2 pair.
\end{lemma}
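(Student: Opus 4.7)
The plan is to reduce both directions of the lemma to a single elementary ``reverse Markov'' sandwich. Writing $d_j := d(f^j(x),f^j(y))$, $A_n := \tfrac{1}{n}\sum_{j=1}^n d_j$ and $\alpha_n(\delta) := \tfrac{1}{n}\#\{j\in[1,n] : d_j<\delta\}$, I would first split the Cesàro sum at the threshold $\delta$ and use $d_j\le r$ to obtain the two-sided bound
\[
\delta\bigl(1-\alpha_n(\delta)\bigr) \;\le\; A_n \;\le\; \delta\,\alpha_n(\delta) + r\bigl(1-\alpha_n(\delta)\bigr) \qquad (\delta>0,\ n\in\NN).
\]
Everything that follows will be squeezed out of this one pair of inequalities, combined with the identities $\Phi_{(x,y)}(\delta)=\liminf_n\alpha_n(\delta)$ and $\Phi^*_{(x,y)}(\delta)=\limsup_n\alpha_n(\delta)$. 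I would also record at this stage that in part~(a) one automatically has $\eps\le r$ (since $A_n\le r$ forces $\limsup_n A_n\le r$), so the candidate constant $\eps':=\eps/(r+1)$ lies in $(0,1)$ and is an admissible D2 parameter.

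For part~(a), I would first use the lower sandwich to extract $\Phi^*_{(x,y)}(\delta)=1$: picking a subsequence along which $A_{n_k}\to 0$, the bound $\alpha_{n_k}(\delta)\ge 1-A_{n_k}/\delta$ forces $\alpha_{n_k}(\delta)\to 1$ for every $\delta>0$. Next I would rearrange the upper sandwich into $\alpha_n(\delta)\le (r-A_n)/(r-\delta)$ (valid for $\delta<r$) and take the liminf, using $\limsup_n A_n\ge\eps$, to obtain $\Phi_{(x,y)}(\delta)\le (r-\eps)/(r-\delta)$. At $\delta=\eps'$, a short cross-multiplication reduces the desired inequality $(r-\eps)/(r-\eps')\le 1-\eps'$ to $0\le\eps^2$, so $\Phi_{(x,y)}(\eps')\le 1-\eps'$ and $(x,y)$ is a D2 $\eps'$-pair.

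For part~(b) I would run the same engine in reverse. From $\Phi^*_{(x,y)}(\delta)=1$ I would extract a subsequence with $\alpha_{n_k}(\delta)\to 1$ and plug it into the upper sandwich, yielding $\limsup_k A_{n_k}\le\delta$ and hence $\liminf_n A_n\le\delta$ for every $\delta>0$, whence $\liminf_n A_n=0$. From $\Phi_{(x,y)}(\eps)\le 1-\eps$ the lower sandwich $A_n\ge\eps(1-\alpha_n(\eps))$ gives $\limsup_n A_n\ge\eps(1-\liminf_n\alpha_n(\eps))\ge\eps^2$, so $(x,y)$ is an MLY $\eps^2$-pair. The final ``in particular'' equivalence then follows at once by chaining~(a) and~(b) over all admissible~$\eps$.

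I expect no conceptual obstacle here: the only delicate moment is the algebraic verification that $\eps/(r+1)$ is the right cut-off in~(a). Cross-multiplication shows it is tight up to a slack of~$\eps^2$, so any comparable choice would work, but one has to pick a constant that simultaneously lies below~$1$ and below the curve $1-(r-\eps)/(r-\delta)$. Everything else is just one line of sandwich manipulation, and the two exponents ($\eps/(r+1)$ and $\eps^2$) arise automatically from that algebra.
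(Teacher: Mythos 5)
Your proposal is correct and follows essentially the same route as the paper: the same two Cesàro-versus-counting sandwich inequalities (your upper bound $A_n\le\delta\alpha_n(\delta)+r(1-\alpha_n(\delta))$ is a marginally sharper form of the paper's $A_n\le\delta+r(1-\alpha_n(\delta))$, but is used identically), the same threshold $\eps/(r+1)$, and the same limsup/liminf extractions in both directions. The only added value is your explicit check that $\eps\le r$ forces $\eps/(r+1)<1$, which the paper leaves implicit.
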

\begin{proof}
	Let us start noticing that for each $\delta>0$ and each $n \in \NN$ we have that
	\begin{equation}\label{eq:delta.n.D<M}
		\delta \cdot \frac{1}{n} \#\{ 1\leq j\leq n \ ; \ d(f^j(x),f^j(y)) \geq \delta \} \leq \frac{1}{n} \sum_{j=1}^n d(f^j(x),f^j(y)).
	\end{equation}
	Moreover, using the bound $r>0$, for each $\delta>0$ and each $n \in \NN$ it can be easily checked that
	\begin{equation}\label{eq:delta.n.r.M<D}
		\frac{1}{n} \sum_{j=1}^n d(f^j(x),f^j(y)) \leq \delta + r \cdot \frac{1}{n} \#\{ 1\leq j\leq n \ ; \ d(f^j(x),f^j(y)) \geq \delta \}.
	\end{equation}
	From now on we only have to use that $\dsup(A)=1-\dinf(\NN\setminus A)$ for every subset $A \subset \NN$. Actually:
	\begin{enumerate}[(a)]
		\item If $(x,y)$ is a MLY $\eps$-pair, taking superior limits in \eqref{eq:delta.n.r.M<D} for $\delta=\tfrac{\eps}{r+1}$ implies $\Phi_{(x,y)}(\tfrac{\eps}{r+1}) \leq 1-\tfrac{\eps}{r+1}$, while taking inferior limits in \eqref{eq:delta.n.D<M} for each $\delta>0$ shows that $\Phi^*_{(x,y)}(\delta)=1$ for all $\delta>0$.
		
		\item If $(x,y)$ is a D2 $\eps$-pair, taking superior limits in \eqref{eq:delta.n.D<M} for $\delta=\eps$ implies that
		\[
		\limsup_{n\to\infty} \frac{1}{n} \sum_{j=1}^n d(f^j(x),f^j(y)) \geq \eps(1-\Phi_{(x,y)}(\eps)) \geq \eps^2,
		\]
		while taking inferior limits in \eqref{eq:delta.n.r.M<D} shows that $0 \leq \liminf_{n\to\infty} \frac{1}{n} \sum_{j=1}^n d(f^j(x),f^j(y)) \leq \delta$ for each positive $\delta>0$, and hence that this inferior limit is equal to $0$.\qedhere
	\end{enumerate}
\end{proof}

By Lemma~\ref{Lem:MLYC<->DC2}, the notions of U-MLYC and U-DC2 are equivalent for every dynamical system fulfilling the suitable boundedness conditions. This fact will be repeatedly used in Section~\ref{Sec_3:Li-Yorke}.

\subsection{The hyperspaces of compact and fuzzy sets}

Given a metric space $(X,d)$ we will denote by $\Bc_d(x,\eps) \subset X$ the open $d$-ball centred at $x \in X$ and of radius $\eps>0$. Moreover, we will consider the spaces $\Cc(X) :=\left\{ C \subset X \ ; \ C \text{ is a non-empty closed set} \right\}$ and $\Kc(X) := \left\{ K \subset X \ ; \ K \text{ is a non-empty compact set} \right\}$. Given two sets $C_1,C_2 \in \Cc(X)$, the value
\[
\textstyle d_H(C_1,C_2) := \max\left\{ \sup_{x_1 \in C_1} \inf_{x_2 \in C_2} d(x_1,x_2) , \sup_{x_2 \in C_2} \inf_{x_1 \in C_1} d(x_2,x_1) \right\}
\]
is the {\em Hausdorff distance} between $C_1$ and $C_2$. In $\Kc(X)$, the map~$d_H:\Kc(X)\times\Kc(X)\longrightarrow[0,+\infty[$ defined as above is a metric, called the {\em Hausdorff metric}, and every continuous map $f:X\longrightarrow X$ induces a $d_H$-continuous map $\com{f}:\Kc(X)\longrightarrow\Kc(X)$, defined as~$\com{f}(K) := f(K) = \{ f(x) \ ; \ x \in K \}$ for each $K \in \Kc(X)$. We will denote by $\Bc_H(K,\eps) \subset \Kc(X)$ the open $d_H$-ball centred at $K \in \Kc(X)$ and of radius $\eps>0$. Given $Y \subset X$ and $\eps\geq 0$ we write $Y+\eps := \{ x \in X \ ; \ d(x,y) \leq \eps \text{ for some } y \in Y \}$. The next fact is then well-known, and we refer the reader to \cite{IllanesNad1999_book_hyperspaces} for more on $\Cc(X)$ and $\Kc(X)$:

\begin{proposition}\label{Pro:Hausdorff}
	Let $(X,d)$ be a metric space, $\eps\geq 0$, and let $C_1,C_2,C_3,C_4 \in \Cc(X)$. Then:\\[-20pt]
	\begin{enumerate}[{\em(a)}]
		\item We have that $d_H(C_1,C_2) \leq \eps$ if and only if $C_1 \subset C_2+\eps$ and $C_2 \subset C_1+\eps$.\\[-15pt]
		
		\item We always have that $d_H(C_1\cup C_2, C_3 \cup C_4) \leq \max\{ d_H(C_1,C_3) , d_H(C_2,C_4) \}$.
	\end{enumerate}
\end{proposition}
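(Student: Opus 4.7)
The plan is to handle part (a) directly from the definition of $d_H$ and then to derive part (b) as a routine formal consequence of (a). For (a), I would first unpack the Hausdorff metric as $d_H(C_1,C_2) = \max\{D(C_1,C_2), D(C_2,C_1)\}$, where $D(A,B) := \sup_{a \in A} \inf_{b \in B} d(a,b)$, so that the inequality $d_H(C_1,C_2) \leq \eps$ splits into the conjunction $D(C_1,C_2) \leq \eps$ and $D(C_2,C_1) \leq \eps$. It then suffices to verify the one-sided equivalence: $D(C_1,C_2) \leq \eps$ if and only if $C_1 \subset C_2 + \eps$. The backward direction is immediate from the definition of $C_2 + \eps$, since each $x_1 \in C_1$ admits some $x_2 \in C_2$ with $d(x_1,x_2) \leq \eps$, forcing $\inf_{x_2 \in C_2} d(x_1,x_2) \leq \eps$. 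The forward direction calls for a minimising-sequence argument: given $x_1 \in C_1$ with $\inf_{x_2 \in C_2} d(x_1,x_2) \leq \eps$, one picks $x_2^n \in C_2$ with $d(x_1,x_2^n)$ decreasing to the infimum and uses closedness of $C_2$ to place $x_1$ inside $C_2 + \eps$. The symmetric statement $D(C_2,C_1) \leq \eps$ if and only if $C_2 \subset C_1 + \eps$ is entirely analogous.

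For (b), the plan is a purely formal reduction to (a). Setting $M := \max\{d_H(C_1,C_3), d_H(C_2,C_4)\}$, I would apply the forward direction of (a) to both pairs to obtain the four inclusions $C_1 \subset C_3 + M$, $C_3 \subset C_1 + M$, $C_2 \subset C_4 + M$, and $C_4 \subset C_2 + M$. Combining each matching pair via the obvious monotonicity $(Y_1 + M) \cup (Y_2 + M) \subset (Y_1 \cup Y_2) + M$ produces $C_1 \cup C_2 \subset (C_3 \cup C_4) + M$ and $C_3 \cup C_4 \subset (C_1 \cup C_2) + M$. Applying the backward direction of (a) to the pair $(C_1 \cup C_2,\, C_3 \cup C_4)$ then immediately delivers the desired inequality $d_H(C_1 \cup C_2, C_3 \cup C_4) \leq M$.

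The only mildly technical point I anticipate is the attainment step in the forward direction of (a): when $\inf_{x_2 \in C_2} d(x_1,x_2) = \eps$ exactly, one needs a specific $x_2 \in C_2$ realising $d(x_1,x_2) \leq \eps$ in order to conclude $x_1 \in C_2 + \eps$. In the intended applications, where $C_1,\ldots,C_4 \in \Kc(X)$, this is automatic by compactness of the sets involved; for general closed sets it can be handled either by first proving the equivalence with strict inequality and passing to the limit, or by reinterpreting $C_2 + \eps$ as the closed $\eps$-neighbourhood $\{x \in X : d(x,C_2) \leq \eps\}$.
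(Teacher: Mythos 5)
The paper does not actually prove this proposition; it is stated as well known and delegated to the Illanes--Nadler reference, so there is nothing to compare against line by line. Your argument is the standard one and is essentially sound: splitting $d_H$ into the two one-sided quantities $D(C_1,C_2)=\sup_{x_1\in C_1}\inf_{x_2\in C_2}d(x_1,x_2)$ and $D(C_2,C_1)$, identifying each with the corresponding inclusion, and then deducing (b) formally from (a) via the monotonicity $(Y_1+M)\cup(Y_2+M)\subset(Y_1\cup Y_2)+M$. Part (b) could also be done in one line directly from the $\sup\inf$ definition (for $x\in C_1$ one has $\inf_{y\in C_3\cup C_4}d(x,y)\leq D(C_1,C_3)\leq M$, and symmetrically), which avoids any appeal to (a) and hence any attainment question; this is worth noting because it makes (b) unconditionally true for arbitrary closed sets.

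The one genuine issue is exactly the point you flag at the end, and it deserves to be treated as more than a ``mildly technical'' aside: with the paper's definition $Y+\eps=\{x\ ;\ d(x,y)\leq\eps\text{ for some }y\in Y\}$, the forward implication in (a) requires the infimum $\inf_{x_2\in C_2}d(x_1,x_2)$ to be \emph{attained} up to $\eps$, and closedness of $C_2$ alone does not give this: a minimising sequence in a closed, non-compact set need not converge (e.g.\ $C_1=\{0\}\cup\{\tfrac{1}{n}e_n\}_{n\in\NN}$ and $C_2=\{(1+\tfrac{1}{n})e_n\}_{n\in\NN}$ in $c_0$ satisfy $d_H(C_1,C_2)=1$ yet $0\notin C_2+1$ in the witness sense). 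So the minimising-sequence step as written in your main text does not close, and of the two repairs you propose only the second one works as stated: reinterpreting $C_2+\eps$ as the closed neighbourhood $\{x\ ;\ d(x,C_2)\leq\eps\}$ (equivalently, restricting (a) to $\Kc(X)$, which covers every use of the witness-based version in the paper except for endographs, where the inclusions are only ever invoked in the harmless contrapositive/strict form). The first repair --- proving the equivalence with strict inequalities and passing to the limit --- only yields $C_1\subset\bigcap_{\delta>0}(C_2+\eps+\delta)=\{x\ ;\ d(x,C_2)\leq\eps\}$, i.e.\ it lands you in the closed-neighbourhood reading anyway rather than rescuing the witness-based one. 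With that reinterpretation (or with compactness) everything in your write-up, including the reduction of (b) to (a), goes through.
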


A {\em fuzzy~set} on the metric space $(X,d)$ will be a function $u:X\longrightarrow\II$, where the value $u(x) \in \II$ denotes the degree of membership of $x$ in $u$. For such a set $u$, its {\em $\alpha$-level} is the set
\[
u_{\alpha} := \{ x \in X \ ; \ u(x) \geq \alpha \} \ \text{ for each } \alpha \in \ ]0,1] \quad \text{ and } \quad u_0 := \cl{\{ x \in X \ ; \ u(x)>0 \}}.
\]
The symbol $\Fc(X)$ will stand for the space of normal fuzzy sets of $(X,d)$, i.e.,\ that formed by the fuzzy sets~$u$ that are upper-semicontinuous functions and such that~$u_0$ is compact and~$u_1$ is non-empty. For each $K \in \Kc(X)$, its characteristic function $\chi_K:X\longrightarrow\II$ belongs to $\Fc(X)$. The piecewise-constant function $u := \max_{1\leq l\leq N} \alpha_l\chi_{K_l}$ also belongs to $\Fc(X)$ whenever $0 < \alpha_1 < \alpha_2 < ... < \alpha_N = 1$ and we choose $K_1,K_2,...,K_N \in \Kc(X)$. Moreover, in \cite{JardonSanSan2020_FSS_some,JardonSanSan2020_MAT_transitivity,MartinezPeRo2021_MAT_chaos} it was proved (or used) the next:

\begin{lemma}\label{Lem:eps.pisos}
	Let $(X,d)$ be a metric space. Given any normal fuzzy set $u \in \Fc(X)$ and any $\eps>0$ there exist numbers $0 = \alpha_0 < \alpha_1 < \alpha_2 < ... < \alpha_N = 1$ fulfilling that $d_H(u_0,u_{\alpha_1})<\eps$ and that $d_H(u_{\alpha}, u_{\alpha_{l+1}})<\eps$ for every level $\alpha \in \ ]\alpha_l, \alpha_{l+1}]$ with $0\leq l\leq N-1$.
\end{lemma}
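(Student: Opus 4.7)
My plan is to prove the lemma by first establishing continuity properties of the level-set map $F:[0,1]\longrightarrow\Kc(X)$ given by $F(\alpha):=u_\alpha$, and then producing the partition via a greedy supremum construction, with a safety factor of $\eps/2$ that turns out to be essential.

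First, I would verify that $F$ is \emph{left-continuous} on $(0,1]$ and \emph{right-continuous} at $0$ in the Hausdorff metric. For left-continuity at $\beta\in(0,1]$: if $\alpha_n\uparrow\beta$, then by definition $\bigcap_n u_{\alpha_n}=u_\beta$; each $u_{\alpha_n}$ is closed by upper-semicontinuity of $u$, hence compact inside $u_0$, and the standard argument for decreasing intersections of non-empty compact sets yields $d_H(u_{\alpha_n},u_\beta)\to 0$. For right-continuity at $0$: $u_0=\overline{\bigcup_{\alpha>0}u_\alpha}$ together with compactness of $u_0$ give $d_H(u_\alpha,u_0)\to 0$ as $\alpha\downarrow 0^+$ via a subsequential contradiction argument. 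With these two facts I can fix $\alpha_1\in(0,1]$ with $d_H(u_0,u_{\alpha_1})<\eps/2$; since $u_{\alpha_1}\subset u_\alpha\subset u_0$ for every $\alpha\in(0,\alpha_1]$, the nesting immediately gives $d_H(u_\alpha,u_{\alpha_1})\leq d_H(u_0,u_{\alpha_1})<\eps$, settling both the isolated condition $d_H(u_0,u_{\alpha_1})<\eps$ and the pointwise condition for the case $l=0$.

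Next, given $\alpha_l<1$ already chosen, I would define inductively
\[
A_l := \left\{\beta\in(\alpha_l,1] \ ; \ d_H(u_\alpha,u_\beta)\leq \tfrac{\eps}{2} \text{ for all } \alpha\in(\alpha_l,\beta]\right\}, \qquad \alpha_{l+1}:=\sup A_l.
\]
The set $A_l$ is non-empty: as $\beta\downarrow\alpha_l^+$ the levels $u_\beta$ converge in Hausdorff metric to the right-limit $\overline{\{x\in X \ ; \ u(x)>\alpha_l\}}$, and since this right-limit contains every $u_\alpha$ with $\alpha>\alpha_l$, the quantity $\sup_{\alpha\in(\alpha_l,\beta]}d_H(u_\alpha,u_\beta)$ collapses to $0$. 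To show that the supremum $\alpha_{l+1}$ lies in $A_l$, I would pick $\beta_n\uparrow\alpha_{l+1}$ in $A_l$ and apply left-continuity of $F$ at $\alpha_{l+1}$: any fixed $\alpha\in(\alpha_l,\alpha_{l+1})$ eventually satisfies $\alpha\leq\beta_n$ and hence $d_H(u_\alpha,u_{\beta_n})\leq\eps/2$, so passing to the limit delivers $d_H(u_\alpha,u_{\alpha_{l+1}})\leq\eps/2<\eps$.

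Finally, I would establish termination: if the sequence $(\alpha_l)$ never reached $1$, it would strictly increase to some $\alpha^{*}\leq 1$, and left-continuity of $F$ at $\alpha^{*}$ would give $\delta>0$ with $d_H(u_\alpha,u_{\alpha^{*}})\leq\eps/2$ for $\alpha\in(\alpha^{*}-\delta,\alpha^{*}]$; any $\alpha_l>\alpha^{*}-\delta$ would then place $\alpha^{*}$ in $A_l$, forcing $\alpha_{l+1}\geq\alpha^{*}$ and contradicting strict convergence. Hence $\alpha_N=1$ for some finite $N$, completing the partition. The step I expect to be the main obstacle is precisely what justifies both the ``sup is attained'' step and the termination step: defining $A_l$ with the tight bound $<\eps$ would only yield $\leq\eps$ in the limit, which is too weak for the strict inequality required by the statement; the safety factor $\eps/2$ in the definition of $A_l$ is the small maneuver that lets the same left-continuity limit arguments close both steps with the genuine strict inequality $<\eps$.
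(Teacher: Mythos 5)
Your proof is correct. Note first that the paper does not actually prove Lemma~\ref{Lem:eps.pisos}: it is quoted as a known fact from the cited references (where the standard argument also rests on left-continuity of the level-set map $\alpha\mapsto u_\alpha$ coming from upper semicontinuity of $u$ and compactness of $u_0$), so there is no in-paper proof to compare against line by line. Your argument is a valid self-contained derivation: the two continuity facts (left-continuity on $]0,1]$ via the decreasing-intersection/compactness argument, and the right-limit convergence $d_H(u_\beta,\overline{\{u>\alpha_l\}})\to 0$ as $\beta\downarrow\alpha_l$, which also handles the special level $u_0$) are exactly what is needed; the nesting inequality $d_H(u_\alpha,u_\beta)\le d_H(u_{\alpha_l^+},u_\beta)$ for $u_\beta\subset u_\alpha\subset u_{\alpha_l^+}$ makes $A_l$ non-empty; and your observation that the $\eps/2$ slack is what lets both the ``supremum is attained'' step and the termination-by-contradiction step close with the strict inequality $<\eps$ is precisely the right point to flag. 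The only implicit step worth spelling out is that $\alpha_{l+1}=\sup A_l>\alpha_l$ strictly (immediate since $A_l\subset\ ]\alpha_l,1]$ is non-empty), which guarantees the sequence genuinely advances before the termination argument is invoked.
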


Given a dynamical system $(X,f)$, it follows from \cite[Propositions~3.1 and 4.9]{JardonSanSan2020_FSS_some} that there exists a well-defined map $\fuz{f}:\Fc(X)\longrightarrow\Fc(X)$, called the {\em Zadeh extension} of $f$, which maps each normal fuzzy set $u \in \Fc(X)$ to the normal fuzzy set $\fuz{f}(u):X\longrightarrow\II$ with
\[
\left[\fuz{f}(u)\right](x) := \sup\{ u(y) \ ; \ y \in f^{-1}(\{x\}) \}, \quad \text{ if } f^{-1}(\{x\}) \neq \varnothing,
\]
and $\left[\fuz{f}(u)\right](x) := 0$, if $f^{-1}(\{x\}) = \varnothing$. Moreover, in \cite{JardonSanSan2020_FSS_some,JardonSanSan2020_MAT_transitivity,RomanChal2008_CSF_some} it was proved (or used) the next:

\begin{proposition}\label{Pro:fuz{f}}
	Let $f:X\longrightarrow X$ be a continuous map acting on a metric space $(X,d)$, $u \in \Fc(X)$, $\alpha \in \II$, $n \in \NN_0$, and $K \in \Kc(X)$. Then:\\[-20pt]
	\begin{enumerate}[{\em(a)}]
		\item $\left[ \fuz{f}(u) \right]_{\alpha} = f(u_{\alpha}) = \com{f}(u_{\alpha})$, i.e.\ the $\alpha$-level of the $\fuz{f}$-image coincides with the $\com{f}$-image of the $\alpha$-level;\\[-10pt]
		
		\item $\left( \fuz{f} \right)^n = \widehat{f^n}$, i.e.\ the composition of $\fuz{f}$ with itself $n$ times coincides with the fuzzification of $f^n$;\\[-10pt]
		
		\item $\fuz{f}\left( \chi_K \right) = \chi_{f(K)} = \chi_{\com{f}(K)}$, i.e.\ the fuzzification $\fuz{f}$ is an extension of the hyperextension $\com{f}$.
	\end{enumerate}
\end{proposition}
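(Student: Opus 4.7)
The plan is to prove the three items in the natural order (a), (b), (c), with (a) doing the real work via a level-set computation, (b) following by induction, and (c) being an immediate specialisation.

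For part (a), I would fix $u\in\Fc(X)$ and $\alpha\in\,]0,1]$ and prove the two inclusions
\[
f(u_\alpha)\subset\bigl[\fuz{f}(u)\bigr]_\alpha \quad\text{and}\quad \bigl[\fuz{f}(u)\bigr]_\alpha\subset f(u_\alpha).
\]
The first inclusion is immediate from the definition of $\fuz{f}$: if $x=f(y)$ with $u(y)\geq\alpha$, then $[\fuz{f}(u)](x)=\sup\{u(z):z\in f^{-1}(\{x\})\}\geq u(y)\geq\alpha$. For the reverse inclusion, here lies the only subtle point of the proposition: given $x$ with $[\fuz{f}(u)](x)\geq\alpha>0$, I must actually \emph{realize} the supremum at some preimage. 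The key observation is that $f^{-1}(\{x\})\cap u_0$ is a closed subset of the compact set $u_0$, hence compact, and that $u$ is upper-semicontinuous, so $u$ attains its maximum on this compact set; the maximizer $y$ satisfies $u(y)\geq\alpha$ and $f(y)=x$, giving $x\in f(u_\alpha)$. The case $\alpha=0$ is handled separately using that $f$ is continuous and $u_0$ is compact: then $f(u_0)$ is closed, $f(\{z:u(z)>0\})\subset\{x:[\fuz{f}(u)](x)>0\}\subset f(u_0)$, and taking closures yields $[\fuz{f}(u)]_0=f(u_0)$. The second equality $f(u_\alpha)=\com{f}(u_\alpha)$ is only the definition of $\com{f}$.

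For part (b), I would proceed by induction on $n$, the base cases $n=0,1$ being trivial. Assuming $(\fuz{f})^n=\widehat{f^n}$, a normal fuzzy set is determined by its level sets, so it suffices to check equality of $\alpha$-levels for every $\alpha\in\II$. Applying part (a) twice,
\[
\bigl[(\fuz{f})^{n+1}(u)\bigr]_\alpha = f\bigl(\bigl[(\fuz{f})^n(u)\bigr]_\alpha\bigr) = f\bigl(f^n(u_\alpha)\bigr) = f^{n+1}(u_\alpha) = \bigl[\widehat{f^{n+1}}(u)\bigr]_\alpha,
\]
closing the induction.

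For part (c), I would just plug $u=\chi_K$ into part (a): for $\alpha\in\,]0,1]$ we have $(\chi_K)_\alpha=K$ and for $\alpha=0$ we have $(\chi_K)_0=\cl{K}=K$ because $K$ is compact and hence closed. Therefore $[\fuz{f}(\chi_K)]_\alpha=f(K)$ for every $\alpha\in\II$, which is exactly the family of level sets of $\chi_{f(K)}=\chi_{\com{f}(K)}$, yielding $\fuz{f}(\chi_K)=\chi_{f(K)}=\chi_{\com{f}(K)}$.

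I expect the only genuine obstacle to be the upper-semicontinuity argument inside part (a): a priori, the supremum in the definition of $\fuz{f}(u)$ need not be attained, and without attainment one cannot pass from $[\fuz{f}(u)](x)\geq\alpha$ to the existence of an actual preimage in $u_\alpha$. The compactness of $u_0$ combined with upper-semicontinuity of $u$ is precisely what rescues this step, and it is also the reason why the domain of $\fuz{f}$ is restricted to normal fuzzy sets. Once part (a) is established, parts (b) and (c) are essentially formal.
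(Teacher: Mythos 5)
Your proof is correct, and it rightly isolates the one non-trivial point: that the supremum defining $[\fuz{f}(u)](x)$ is attained, which follows from the upper-semicontinuity of $u$ together with the compactness of $f^{-1}(\{x\})\cap u_0$. The paper itself states this proposition without proof, citing \cite{JardonSanSan2020_FSS_some,JardonSanSan2020_MAT_transitivity,RomanChal2008_CSF_some}; your argument is the standard one established in those references, so there is nothing to amend.
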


To talk about the continuity of the Zadeh extension $\fuz{f}:\Fc(X)\longrightarrow\Fc(X)$, we will endow the space of normal fuzzy sets $\Fc(X)$ with four different metrics. In particular, given a metric space $(X,d)$, the so-called {\em supremum metric} $d_{\infty}:\Fc(X)\times\Fc(X)\longrightarrow[0,+\infty[$ is defined as
\[
d_{\infty}(u,v) := \sup_{\alpha\in\II} d_H(u_{\alpha},v_{\alpha}) \quad \text{  for each pair } u,v \in \Fc(X),
\]
where $d_H$ is the Hausdorff metric on $\Kc(X)$. The {\em Skorokhod metric} $d_{0}:\Fc(X)\times\Fc(X)\longrightarrow[0,+\infty[$ is defined for each pair $u,v \in \Fc(X)$ as
\[
d_{0}(u,v) := \inf\left\{ \eps>0 \ ; \ \text{there is } \xi \in \Tc \text{ such that } \sup_{\alpha\in\II} |\xi(\alpha)-\alpha| \leq \eps \text{ and } d_{\infty}(u,\xi\circ v) \leq \eps \right\},
\]
where $\Tc$ is the set of strictly increasing homeomorphisms of the unit interval $\xi:\II\longrightarrow\II$. To talk about the {\em endograph} and {\em sendograph metrics} we need the auxiliary metric $\com{d}$ on the product $X\times\II$ with
\[
\com{d}\left((x,\alpha),(y,\beta)\right) := \max\left\{ d(x,y) , |\alpha - \beta | \right\} \quad \text{ for each pair } (x,\alpha),(y,\beta) \in X\times\II.
\]
Given now any fuzzy set $u \in \Fc(X)$, the {\em endograph of $u$} is defined as the set
\[
\eend(u) := \left\{ (x,\alpha) \in X\times\II \ ; \ u(x) \geq \alpha \right\},
\]
and the {\em sendograph of $u$} is defined as $\send(u) := \eend(u) \cap (u_0\times\II)$. Then, for each pair $u,v \in \Fc(X)$ the~{\em sendograph metric}~$d_S(u,v)$ on~$\Fc(X)$ is the Hausdorff metric $\com{d}_H(\send(u),\send(v))$ on $\Kc(X\times\II)$, and the {\em endograph metric} $d_{E}(u,v)$ on $\Fc(X)$ is the Hausdorff distance $\com{d}_H(\eend(u),\eend(u))$ on $\Cc(X\times\II)$. Although $d_{E}$ is defined as a distance of closed but not necessarily compact sets on $X\times\II$, the fact that the supports of the fuzzy sets $u,v \in \Fc(X)$ are compact implies that $d_{E}(u,v)$ is well-defined.

Following \cite{Lopez2026_IJFS_topological-I,Lopez2026_JIA_topological-II} we will denote by $\Bc_{\infty}(u,\eps)$, $\Bc_{0}(u,\eps)$, $\Bc_{S}(u,\eps)$ and $\Bc_{E}(u,\eps)$ the open balls centred at $u \in \Fc(X)$ and of radius $\eps>0$ for each of the metrics $d_{\infty}$, $d_{0}$, $d_{S}$ and $d_{E}$. Moreover, for each metric $\rho \in \{ d_{\infty} , d_{0} , d_{S} , d_{E} \}$ we will denote the metric space $(\Fc(X),\rho)$ by $\Fc_{\infty}(X)$, $\Fc_{0}(X)$, $\Fc_{S}(X)$ and $\Fc_{E}(X)$ respectively. These metrics come from the general theory of Spaces of Fuzzy Sets and each of them has its own role and importance (see \cite[Section~2]{Lopez2026_IJFS_topological-I}). In this paper we will need some of the main relations among these metrics (see \cite{JardonSan2021_FSS_expansive,JardonSan2021_IJFS_sensitivity,JardonSanSan2020_FSS_some,Lopez2026_IJFS_topological-I} and the references therein for more details):

\begin{proposition}\label{Pro:fuzzy.metrics}
	Let $(X,d)$ be a metric space, $u,v \in \Fc(X)$, $K,L \in \Kc(X)$ and $x \in X$. Then:
	\begin{enumerate}[{\em(a)}]
		\item $d_{E}(u,v) \leq d_{S}(u,v) \leq d_{0}(u,v) \leq d_{\infty}(u,v)$ and $d_{E}(u,v) \leq 1$.
		
		\item $d_{S}(\chi_{\{x\}},u) = d_{0}(\chi_{\{x\}},u) = d_{\infty}(\chi_{\{x\}},u) = d_H(\{x\},u_0) = \max\{ d(x,y) \ ; \ y \in u_0 \}$.
		
		\item $d_{0}(\chi_{K},u) = d_{\infty}(\chi_{K},u) = \max\{ d_H(K,u_0) , d_H(K,u_1) \}$.
		
		\item $d_{E}(\chi_{K},\chi_{L})=\min\{ d_H(K,L) , 1 \}$ while $d_{S}(\chi_{K},\chi_{L}) = d_{0}(\chi_{K},\chi_{L}) = d_{\infty}(\chi_{K},\chi_{L})=d_H(K,L)$.
		
		\item $d_H(u_0,v_0) \leq d_{S}(u,v)$ and also $\max\{ d_H(u_0,v_0), d_H(u_1,v_1) \} \leq d_{0}(u,v)$.
	\end{enumerate}
\end{proposition}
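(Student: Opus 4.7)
The statement is a compendium of (in)equalities that can all be proved by unwinding the definitions and invoking three routine tools: the Hausdorff manipulations of Proposition~\ref{Pro:Hausdorff}, the level-set nesting $u_1 \subset u_\alpha \subset u_0$ valid for every $\alpha \in \II$, and the endpoint-fixing property of Skorokhod homeomorphisms, namely that every $\xi \in \Tc$ forces $\xi^{-1}(0) = 0$ and $\xi^{-1}(1) = 1$, so that $(\xi \circ v)_\alpha = v_{\xi^{-1}(\alpha)}$ for every $\alpha \in \II$. The hardest single step will be $d_S \leq d_0$ in part~(a), where one has to synchronize the sendograph picture with a Skorokhod reparametrization.

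For part~(a), I would first settle $d_0 \leq d_\infty$ by taking $\xi = \mathrm{id}_\II$ in the definition of $d_0$. The bounds $d_E \leq d_S$ and $d_E(u,v) \leq 1$ both follow from the decomposition $\eend(u) = \send(u) \cup ((X \setminus u_0) \times \{0\})$: the extra endograph points at height $0$ already lie in $\eend(v) \supset X \times \{0\}$ at $\com{d}$-distance $0$, and every $(x,\alpha) \in \eend(u)$ sits at $\com{d}$-distance $\alpha \leq 1$ from $(x, 0) \in \eend(v)$. For the main obstacle $d_S \leq d_0$, given $\eps > d_0(u,v)$ with witness $\xi \in \Tc$ and a point $(x, \alpha) \in \send(u)$ (so $x \in u_\alpha$ when $\alpha>0$), I would use $d_H(u_\alpha, v_{\xi^{-1}(\alpha)}) \leq \eps$ to produce $y \in v_{\xi^{-1}(\alpha)}$ with $d(x,y) \leq \eps$, and observe that $(y, \xi^{-1}(\alpha)) \in \send(v)$ with height gap $|\alpha - \xi^{-1}(\alpha)| = |\xi(\xi^{-1}(\alpha)) - \xi^{-1}(\alpha)| \leq \eps$; the degenerate case $\alpha = 0$ is handled by $\xi^{-1}(0) = 0$ together with $d_H(u_0,v_0)\leq\eps$.

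Parts~(b),~(c), and~(d) are computations that exploit the constant $\alpha$-levels of characteristic functions. For~(c), I would compute $d_\infty(\chi_K, u) = \sup_\alpha d_H(K, u_\alpha)$ by splitting each $d_H(K, u_\alpha)$ into its two one-sided parts, bounding the first by $d_H(K, u_1)$ via $u_1 \subset u_\alpha$ and the second by $d_H(K, u_0)$ via $u_\alpha \subset u_0$, and noting that these values are realized at $\alpha = 1$ and $\alpha = 0$; since $\xi^{-1}$ bijects $\II$, the same supremum appears for $\xi \circ u$, so $d_0(\chi_K, u) = d_\infty(\chi_K, u)$. Part~(b) is the specialization $K = \{x\}$: the inclusion $u_1 \subset u_0$ collapses the formula of~(c) to $d_H(\{x\}, u_0) = \max_{y \in u_0} d(x, y)$, and a direct calculation on $\send(\chi_{\{x\}}) = \{x\} \times \II$ confirms the same value for $d_S$ via part~(a). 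Part~(d) follows from~(c) with $u = \chi_L$ (so $u_0 = u_1 = L$) together with the direct equality $d_S(\chi_K, \chi_L) = d_H(K, L)$ obtained from $\send(\chi_K) = K \times \II$; the endograph identity $d_E(\chi_K, \chi_L) = \min(d_H(K, L), 1)$ comes from computing that $(k, \alpha) \in K \times \II$ is at $\com{d}$-distance $\min(d(k, L), \alpha)$ from $\eend(\chi_L) = (L \times \II) \cup (X \times \{0\})$, taking suprema over $\alpha \in \II$ and $k \in K$, and symmetrizing.

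Finally, for part~(e), the inequality $d_H(u_0, v_0) \leq d_S(u, v)$ exploits $u_0 \times \{0\} \subset \send(u)$: for every $x \in u_0$, the point $(x, 0) \in \send(u)$ is within $d_S(u, v)$ of some $(y, \beta) \in \send(v)$, forcing $y \in v_0$ and $d(x, y) \leq d_S(u, v)$, and the symmetric argument closes the bound. The second inequality reads the definition of $d_0$ at the fixed levels $\alpha \in \{0, 1\}$: since every $\xi \in \Tc$ satisfies $\xi^{-1}(0) = 0$ and $\xi^{-1}(1) = 1$, any $\eps > d_0(u, v)$ with witness $\xi$ satisfies $d_H(u_0, v_0) = d_H(u_0, v_{\xi^{-1}(0)}) \leq d_\infty(u, \xi \circ v) \leq \eps$ and likewise $d_H(u_1, v_1) \leq \eps$, giving the claim.
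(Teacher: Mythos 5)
Your proposal is correct in all five parts. There is, however, nothing in the paper to compare it against: the authors state Proposition~\ref{Pro:fuzzy.metrics} without proof, presenting it as a collection of known relations and pointing the reader to \cite{JardonSan2021_FSS_expansive,JardonSan2021_IJFS_sensitivity,JardonSanSan2020_FSS_some,Lopez2025_arXiv_topological-I} ``and the references therein.'' So your contribution is a self-contained verification of facts the paper imports from the literature. The ingredients you isolate are exactly the right ones: the nesting $u_1 \subset u_\alpha \subset u_0$, the identity $(\xi\circ v)_\alpha = v_{\xi^{-1}(\alpha)}$ (including the endpoint cases $\alpha\in\{0,1\}$, which is what makes the second half of part~(e) and the $d_0=d_\infty$ collapse in part~(c) work), and the decomposition $\eend(u)=\send(u)\cup\bigl((X\setminus u_0)\times\{0\}\bigr)$ together with $X\times\{0\}\subset\eend(v)$, which simultaneously yields $d_E\leq d_S$, $d_E\leq 1$, and the $\min\{\,\cdot\,,1\}$ truncation in part~(d). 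Two places are stated tersely but are easily completed: in the proof of $d_S\leq d_0$ you only write out the inclusion $\send(u)\subset\send(v)+\eps$, and the reverse inclusion needs the mirror argument with $x\in u_{\xi(\beta)}$ approximating $y\in v_\beta$ at height gap $|\xi(\beta)-\beta|\leq\eps$; and in part~(b) the phrase ``via part~(a)'' only gives the upper bound $d_S\leq d_0$, so the direct computation on $\send(\chi_{\{x\}})=\{x\}\times\II$ (or an appeal to your own part~(e), giving $d_H(\{x\},u_0)\leq d_S(\chi_{\{x\}},u)$) is genuinely needed for the matching lower bound, as you indicate. Neither point is a gap.
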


The next lemma was the key fact used in \cite{Lopez2026_IJFS_topological-I} to get most of the results obtained there, but we will also use it here in Section~\ref{Sec_4:sensitivity}:

\begin{lemma}[\textbf{\cite[Lemma~2.4]{Lopez2026_IJFS_topological-I}}]\label{Lem:key}
	Let $(X,d)$ be a metric space, and assume that the sets $K \in \Kc(X)$ and $u \in \Fc(X)$ fulfill that $\delta := d_{E}(\chi_K,u) < \tfrac{1}{2}$. Then $d_H(K,u_{\alpha}) \leq \delta$ for every $\alpha \in \ ]\delta,1-\delta]$.
\end{lemma}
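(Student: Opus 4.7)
The plan is to unpack the definition of the endograph metric and translate the hypothesis $d_E(\chi_K,u)=\delta$ into a pair of Hausdorff-type inclusions in the product metric $\com{d}$. By Proposition~\ref{Pro:Hausdorff}(a) applied to the closed subsets $\eend(\chi_K)$ and $\eend(u)$ of $X\times\II$, the bound $\com{d}_H(\eend(\chi_K),\eend(u))\leq\delta$ is equivalent to the two inclusions $\eend(\chi_K)\subset\eend(u)+\delta$ and $\eend(u)\subset\eend(\chi_K)+\delta$. The key structural observation is that the endograph of a characteristic function decomposes as $\eend(\chi_K)=(K\times\II)\cup(X\times\{0\})$: every element is either a ``column'' point $(y,\beta)$ with $y\in K$ and arbitrary $\beta\in\II$, or a ``floor'' point $(y,0)$ with $y$ unrestricted. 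This dichotomy, combined with the strict bound $\alpha>\delta$ and the upper bound $\alpha\leq 1-\delta$, drives both inclusions.

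To show $u_{\alpha}\subset K+\delta$ I fix $x\in u_{\alpha}$, so $(x,\alpha)\in\eend(u)$, and apply the inclusion $\eend(u)\subset\eend(\chi_K)+\delta$ to produce $(y,\beta)\in\eend(\chi_K)$ with $d(x,y)\leq\delta$ and $|\alpha-\beta|\leq\delta$. If $y\notin K$, then necessarily $\beta=0$, which forces $\alpha\leq\delta$, contradicting the assumption $\alpha>\delta$. Hence $y\in K$ and $d(x,K)\leq\delta$. For the reverse inclusion $K\subset u_{\alpha}+\delta$, I fix $y\in K$ and work with the ``full-height'' point $(y,1)\in\eend(\chi_K)$, since $\chi_K(y)=1$. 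The inclusion $\eend(\chi_K)\subset\eend(u)+\delta$ yields some $(x,\beta)\in\eend(u)$ with $d(x,y)\leq\delta$ and $\beta\geq 1-\delta$. The hypothesis $\alpha\leq 1-\delta$ then gives $u(x)\geq\beta\geq 1-\delta\geq\alpha$, so $x\in u_{\alpha}$ and $d(y,u_{\alpha})\leq\delta$. Combining both inclusions via Proposition~\ref{Pro:Hausdorff}(a) yields $d_H(K,u_{\alpha})\leq\delta$.

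The only delicate point is the exclusion of the floor $X\times\{0\}$ as an approximant in the first inclusion, which is exactly where the strict inequality $\alpha>\delta$ is used; symmetrically, the bound $\alpha\leq 1-\delta$ is forced on the other side to guarantee that the approximating height $\beta$, known only to satisfy $\beta\geq 1-\delta$, still reaches above $\alpha$. The assumption $\delta<\tfrac{1}{2}$ is precisely what ensures the interval $\ ]\delta,1-\delta]$ is non-empty, so the conclusion is non-vacuous.
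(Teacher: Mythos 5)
Your proof is correct. The paper does not actually reprove this lemma (it is imported from \cite[Lemma~2.4]{Lopez2025_arXiv_topological-I}), but your argument --- converting $d_{E}(\chi_K,u)\leq\delta$ into the two endograph inclusions via Proposition~\ref{Pro:Hausdorff}(a), decomposing $\eend(\chi_K)=(K\times\II)\cup(X\times\{0\})$, using $\alpha>\delta$ to rule out the floor and $\alpha\leq 1-\delta$ to pull the approximating height above $\alpha$ --- is precisely the technique the paper uses to prove the companion Lemma~\ref{Lem:key2}, so it is essentially the intended proof.
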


For the Li-Yorke chaotic-type properties we need to bound from above and from below the value of the underlying metric. Thus, the following extension of Lemma~\ref{Lem:key} will be very useful:

\begin{lemma}\label{Lem:key2}
	Let $(X,d)$ be a metric space, $K \in \Kc(X)$, $u \in \Fc(X)$, and let $\delta,\eps>0$. Hence:
	\begin{enumerate}[{\em(a)}]
		\item If $\delta=d_{S}(\chi_K,u)<1$, then we have that $d_H(K,u_{\alpha})\leq\delta$ for all $\alpha \in [0,1-\delta]$.
		
		\item If $\eps < \min\{ d_{S}(\chi_K,u) , 1 \}$, then we have that $d_H(K,u_{\alpha})>\eps$ for some $\alpha \in [0,1-\eps]$.
		
		\item If $\eps < \min\{ d_{E}(\chi_K,u) , \tfrac{1}{2} \}$, then we have that $d_H(K,u_{\alpha})>\eps$ for some $\alpha \in \ ]\eps,1-\eps]$.
	\end{enumerate}
\end{lemma}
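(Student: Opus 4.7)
The plan is to unravel the definitions of $d_{S}$ and $d_{E}$ as Hausdorff distances (in $(X\times\II,\com{d})$) between the sendographs and endographs of $\chi_K$ and $u$, and to exploit the explicit description of these graphs for the characteristic function. Since $(\chi_K)_0=K$ and $\chi_K$ takes only the values $0$ and $1$, one checks at once that $\send(\chi_K)=K\times\II$ and $\eend(\chi_K)=(K\times\II)\cup(X\times\{0\})$. The hypothesis on $d_{S}$ or $d_{E}$ then translates into a two-sided $\eps$-inclusion between these graphs and those of $u$, which in turn transmits information to each level $u_{\alpha}$.

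For (a), I would compute $d_{S}(\chi_K,u)=\com{d}_H(K\times\II,\send(u))=\delta$ directly. To obtain $K\subset u_{\alpha}+\delta$ for $\alpha\in[0,1-\delta]$, given $x\in K$ I lift it to $(x,\alpha+\delta)\in K\times\II$; the inclusion $K\times\II\subset\send(u)+\delta$ produces $(y,\beta)\in\send(u)$ with $d(x,y)\leq\delta$ and $|\alpha+\delta-\beta|\leq\delta$, forcing $\beta\geq\alpha$ and hence $y\in u_{\beta}\subset u_{\alpha}$. For the reverse inclusion, for $y\in u_{\alpha}$ the point $(y,\alpha)$ lies in $\send(u)$ (using $y\in u_{0}$), and $\send(u)\subset(K\times\II)+\delta$ then yields some $x\in K$ within $\delta$ of $y$.

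Parts (b) and (c) will be proved by contraposition. Assuming $d_H(K,u_{\alpha})\leq\eps$ for every $\alpha$ in the relevant range, I will show $d_{S}(\chi_K,u)\leq\eps$ (resp.\ $d_{E}(\chi_K,u)\leq\eps$). The argument consists in taking an arbitrary point of $\send(\chi_K)$ or $\send(u)$ (resp.\ of $\eend(\chi_K)$ or $\eend(u)$) and matching it with a point of the other set at $\com{d}$-distance at most $\eps$. When the height coordinate $\alpha$ lies in the allowed range, the assumption $d_H(K,u_{\alpha})\leq\eps$ produces the match immediately; when $\alpha$ lies above the range I truncate to the admissible level $\alpha':=1-\eps$, apply $d_H(K,u_{\alpha'})\leq\eps$ to obtain a horizontal $\eps$-approximation in $X$, and observe that the vertical discrepancy $|\alpha-\alpha'|$ is also at most $\eps$ because $\alpha\leq 1$. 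In (c) the hypothesis $\eps<\tfrac{1}{2}$ is precisely what guarantees that $]\eps,1-\eps]\neq\varnothing$ and leaves enough vertical room to absorb the low-level cases $\alpha\in[0,\eps]$ by collapsing to a point of the form $(\cdot,0)$, which always belongs to every endograph but need not belong to $\send(u)$.

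The main technical point I anticipate is the boundary bookkeeping in (b) and (c): one has to handle separately the points with $\alpha=0$ (which sit in $\send(u)$ precisely when the spatial coordinate belongs to $u_0$, but automatically sit in every endograph), the points with $\alpha>1-\eps$ (which require truncation to $\alpha'=1-\eps$), and, only in part (c), the points with $\alpha\in[0,\eps]$ that can be collapsed to height $0$. These asymmetries explain why part (b) covers the full range $[0,1-\eps]$ under the weaker restriction $\eps<1$, whereas part (c) needs both endpoints to be trimmed and the stronger restriction $\eps<\tfrac{1}{2}$. Once this case analysis is carried out cleanly, the three claims follow.
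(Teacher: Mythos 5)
Your proposal is correct and follows essentially the same route as the paper: part (a) via the two Hausdorff inclusions between $\send(\chi_K)=K\times\II$ and $\send(u)$, and parts (b) and (c) by contraposition with the same case analysis (truncating heights above $1-\eps$ to $1-\eps$, and in (c) collapsing heights in $[0,\eps]$ to $0$, which lies in every endograph). The only cosmetic differences are that you lift $x\in K$ to height $\alpha+\delta$ where the paper lifts to height $1$, and that you derive $u_{\alpha}\subset K+\delta$ directly from the sendograph inclusion where the paper invokes the inequality $d_H(u_0,v_0)\leq d_{S}(u,v)$ of Proposition~\ref{Pro:fuzzy.metrics}.
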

Statement (a) is a ``sendograph version'' of Lemma~\ref{Lem:key}. Moreover, statements (c) and (b) can be interpreted, respectively, as a reciprocal for statement (a) and Lemma~\ref{Lem:key}.
\begin{proof}[Proof of Lemma~\ref{Lem:key2}]
	(a): Given any $\alpha \in [0,1-\delta]$ we must show that $d_H(K,u_{\alpha})\leq\delta$ or, equivalently, that we have the inclusions $K \subset u_{\alpha} + \delta$ and $u_{\alpha} \subset K + \delta$ (see Proposition~\ref{Pro:Hausdorff}).
	
	To check $K \subset u_{\alpha} + \delta$, since $d_{S}(\chi_K,u) = \delta$ we have that $\send(\chi_K) \subset \send(u) + \delta$, so that given $x \in K$ we have that $(x,1) \in \send(\chi_K)$ and that there exists $(y,\beta) \in \send(u)$ for which
	\[
	\com{d}\left( (x,1) , (y,\beta) \right) = \max\{ d(x,y) , |1-\beta| \} \leq \delta.
	\]
	This implies that $u(y) \geq \beta \geq 1-\delta \geq \alpha$ so that $y \in u_{\alpha}$. We deduce that $x \in \{y\} + \delta \subset u_{\alpha} + \delta$, and hence that $K \subset u_{\alpha} + \delta$. To check that $u_{\alpha} \subset K + \delta$ we only need to use that $d_H(v_0,w_0) \leq d_{S}(v,w)$ for every pair of fuzzy sets $v,w \in \Fc(X)$ as stated in part (e) of Proposition~\ref{Pro:fuzzy.metrics}. This implies that $d_H(K,u_0) \leq d_{S}(\chi_K,u) = \delta$, and by part (a) of Proposition~\ref{Pro:Hausdorff} we have that $u_{\alpha} \subset u_0 \subset K + \delta$.
	
	(b): Assume that $d_H(K,u_{\alpha})\leq\eps<1$ for all $\alpha \in [0,1-\eps]$ and let us prove that then $d_{S}(\chi_K,u)\leq\eps$ or, equivalently, that the inclusions $\send(u) \subset \send(\chi_K) + \eps$ and $\send(\chi_K) \subset \send(u) + \eps$ hold.
	
	The inclusion $\send(u) \subset \send(\chi_K) + \eps$ follows from the assumption $d_H(K,u_0)\leq\eps$ as it implies that $u_{\alpha} \subset u_0 \subset K + \eps$ for all $\alpha \in \II$. To check that $\send(\chi_K) \subset \send(u) + \eps$ pick any point $(x,\alpha) \in \send(\chi_K)$ and note that, since $x \in K$ and we assumed that $d_H(K,u_{1-\eps})\leq\eps$, then $K \subset u_{1-\eps} + \eps$ and there exists a point $y \in u_{1-\eps}$ such that $d(x,y)\leq \eps$. As $y \in u_{1-\eps} \subset u_{\beta}$ for all $\beta \in [0,1-\eps]$, we have that
	\[
	(y,\alpha) \in \send(u) \quad \text{with} \quad \com{d}( (x,\alpha) , (y,\alpha) ) = \max\{ d(x,y) , |\alpha-\alpha| \} = d(x,y) \leq \eps,
	\]
	if $\alpha \in [0,1-\eps]$, or we have that
	\[
	(y,1-\eps) \in \send(u) \quad \text{with} \quad \com{d}( (x,\alpha) , (y,1-\eps) ) = \max\{ d(x,y) , |\alpha-(1-\eps)| \} \leq \eps,
	\]
	if $\alpha \in [1-\eps,1]$. Thus, we have that $(x,\alpha) \in \send(u) + \eps$, and hence that $\send(\chi_K) \subset \send(u) + \eps$.
	
	(c): Assume that $d_H(K,u_{\alpha})\leq\eps<\tfrac{1}{2}$ for all $\alpha \in \ ]\eps,1-\eps]$ and let us prove that then $d_{E}(\chi_K,u)\leq\eps$ or, equivalently, that we have $\eend(u) \subset \eend(\chi_K) + \eps$ and $\eend(\chi_K) \subset \eend(u) + \eps$.
	
	We check that $\eend(u) \subset \eend(\chi_K) + \eps$. Indeed, our assumption implies that $u_{\alpha} \subset K + \eps$ for all $\alpha \in \ ]\eps,1-\eps]$, and then that $u_{\alpha} \subset u_{1-\eps} \subset K + \eps$ for all $\alpha \in [1-\eps,1]$. Moreover, for $(x,\alpha) \in \eend(u)$ with $\alpha \in [0,\eps]$ it follows that
	\[
	(x,0) \in \eend(\chi_K) \quad \text{with} \quad \com{d}( (x,\alpha) , (x,0) ) = \max\{ d(x,x) , |\alpha| \} \leq \eps.
	\]
	Altogether we get that $\eend(u) \subset \eend(\chi_K) + \eps$. To prove that $\eend(\chi_K) \subset \eend(u) + \eps$ one can argue similarly to the respective part of statement (b), where only $d_H(K,u_{1-\eps})\leq\eps$ is needed. In fact, the only difference with the proof of statement (b) is the case in which we pick $(x,\alpha) \in \eend(\chi_K)$ for the value $\alpha=0$, but in this situation we trivially have that $(x,\alpha) \in \eend(u) \subset \eend(u) + \eps$.
\end{proof}

To conclude Section~\ref{Sec_2:notation}, let us recall that the continuity of the extension $\fuz{f}:(\Fc(X),\rho)\longrightarrow(\Fc(X),\rho)$ for every metric $\rho \in \{ d_{\infty} , d_{0} , d_{S} , d_{E} \}$ is also well-known (see \cite{JardonSanSan2020_FSS_some} and \cite{Kupka2011_IS_on}). We are now ready to study the Li-Yorke chaotic-type dynamical properties of $(\Fc(X),\fuz{f})$.

\section{Li-Yorke chaos on fuzzy dynamical systems}\label{Sec_3:Li-Yorke}

After introducing Li-Yorke chaos in Subsection~\ref{SubSec_2.1:LY.MLY.DC}, in this section we extend \cite[Proposition~3]{MartinezPeRo2021_MAT_chaos} in two different directions. First, we reformulate it to include mean Li-Yorke chaos and the several variants of distributional chaos; see Theorem~\ref{The:scrambled}. Then, we extend \cite[Proposition~3]{MartinezPeRo2021_MAT_chaos} in a more sophisticated way to show that $(\Fc(X),\fuz{f})$ can exhibit chaotic properties under extremely weak hypotheses on the original dynamical system $(X,f)$ or on $(\Kc(X),\com{f})$; see Theorem~\ref{The:pairs}. Finally, in Subsection~\ref{SubSec_3.2:counter} we use Theorem~\ref{The:pairs} to provide a strong positive solution to Question~\ref{Ques:Li-Yorke.transfer}; see Examples~\ref{Exa_1:main},~\ref{Exa_2:E_MLYC}~and~\ref{Exa_3:E_DC3}.

\subsection{Two extensions of a previous result}

We start with a modest extension of \cite[Proposition~3]{MartinezPeRo2021_MAT_chaos}:

\begin{theorem}\label{The:scrambled}
	Let $f:X\longrightarrow X$ be a continuous map on a metric space $(X,d)$, and let $\eps,c,a,b,r>0$ with $0 \leq a < b$. The following statements hold:\\[-15pt]
	\begin{enumerate}[{\em(a)}]
		\item Assume that there exists $S \subset X$ fulfilling any of the next conditions: that $S$ is LY, MLY, D1~or~D2 (resp. $\eps$-)scrambled, that $S$ is \D{1} (resp.\ U-)scrambled, that $S$ is \D{2} (resp.\ $(\eps,c)$-)scrambled, or that $S$ is D3 (resp.\ $(a,b)$-)scrambled for $f$ and $d$. Then there exists a set $\com{S} \subset \Kc(X)$ with the same cardinality and fulfilling the same property as $S$ but with respect to $\com{f}$ and $d_H$.\\[-10pt]
		
		\item Assume that there is $\com{S} \subset \Kc(X)$ fulfilling any of the next conditions: that $\com{S}$ is LY, MLY, D1~or~D2 (resp. $\eps$-)scrambled, that $\com{S}$ is \D{1} (resp.\ U-)scrambled, that $\com{S}$ is \D{2} (resp.\ $(\eps,c)$-)scrambled, or that $\com{S}$ is D3 (resp.\ $(a,b)$-)scrambled for $\com{f}$ and $d_H$. Then there exists $\fuz{S} \subset \Fc(X)$ with the same cardinality and fulfilling the same property as $\com{S}$ but with respect to $\fuz{f}$ and any $\rho \in \{d_{\infty}, d_{0}, d_{S}\}$.\\[-10pt]
		
		\item Assume that there exists $\com{S} \subset \Kc(X)$ fulfilling any of the next conditions: that $\com{S}$ is LY, D1~or~D2 (resp. $\eps$-)scrambled, that $\com{S}$ is \D{1} (resp.\ U-)scrambled, or that $\com{S}$ is \D{2} (resp.\ $(\eps,c)$-)scrambled for~$\com{f}$ and~$d_H$. Then, considering the value $\eps':=\min\{ \eps , 1 \}$, there exists a set $\fuz{S} \subset \Fc(X)$ with the same cardinality as~$\com{S}$~and fulfilling, respectively, that $\fuz{S}$ is LY, D1~or~D2 (resp. $\eps'$-)scrambled, that $\fuz{S}$~is~\D{1}~(resp.\ U-)scrambled, or that $\fuz{S}$ is \D{2}~(resp.\ $(\eps',c)$-)scrambled for $\fuz{f}$ and $d_{E}$.\\[-10pt]
		
		\item Assume that there exists $\com{S} \subset \Kc(X)$ fulfilling that $\com{S}$ is MLY (resp. $\eps$-)scrambled for $\com{f}$ and $d_H$, and suppose also that the set $\{ d_H(\com{f}^j(K),\com{f}^j(L)) \ ; \ j \in \NN \}$ is bounded (resp.\ bounded by $r>0$) for every pair of sets $K,L \in \com{S}$. Then there exists a set $\fuz{S} \subset \Fc(X)$ with the same cardinality as $\com{S}$ and fulfilling that~$\fuz{S}$ is MLY (resp.\ $(\tfrac{\eps}{r+1})^2$-)scrambled with respect to $\fuz{f}$ and the metric $d_{E}$.\\[-10pt]
		
		\item Assume that there exists $\com{S} \subset \Kc(X)$ fulfilling that $\com{S}$ is D3 (resp. $(a,b)$-)scrambled for~$\com{f}$ and~$d_H$, and suppose also that for each pair of sets $K,L \in \com{S}$ with $K\neq L$ we have that $(K,L)$ is a D3~$(a_{(K,L)},b_{(K,L)})$-pair with $a_{(K,L)} < 1$ (resp.\ suppose that $a < 1$). Then there is $\fuz{S} \subset \Fc(X)$ with the same cardinality as $\com{S}$ and fulfilling that~$\fuz{S}$ is D3~(resp.\ $(a,\min\{b,1\})$-)scrambled for $\fuz{f}$ and $d_{E}$.\\[-10pt]

		\item If the dynamical system $(X,f)$ presents any of the properties of LYC, MLYC, DC1, \DC{1}, DC2, \DC{2}, DC3, U-LYC, U-MLYC, U-DC1, U-\DC{1}, U-DC2, U-\DC{2} or U-DC3, for the metric $d$, then so does the extended dynamical system $(\Kc(X),\com{f})$ for the metric $d_H$.\\[-10pt]
		
		\item If the dynamical system $(\Kc(X),\com{f})$ presents any of the properties of LYC, MLYC, DC1, \DC{1}, DC2, \DC{2}, DC3, U-LYC, U-MLYC, U-DC1, U-\DC{1}, U-DC2, U-\DC{2} or U-DC3, for the metric $d_H$, then so do the extended dynamical systems $(\Fc_{\infty}(X),\fuz{f})$, $(\Fc_{0}(X),\fuz{f})$ and $(\Fc_{S}(X),\fuz{f})$.\\[-10pt]

		\item If the dynamical system $(\Kc(X),\com{f})$ presents any of the properties of LYC, DC1, \DC{1}, DC2, \DC{2}, U-LYC, U-DC1, U-\DC{1}, U-DC2 or U-\DC{2}, for $d_H$, then so does $(\Fc_{E}(X),\fuz{f})$.\\[-10pt]
	
		\item If the dynamical system $(\Kc(X),\com{f})$ is MLYC or U-MLYC for $d_H$, and if the metric space $(X,d)$ is bounded, then the extended dynamical system $(\Fc_{E}(X),\fuz{f})$ is MLYC or U-MLYC, respectively.\\[-10pt]
		
		\item If the dynamical system $(\Kc(X),\com{f})$ is DC3 or U-DC3 for $d_H$, and if $\diam_d(X)\leq 1$, then the extended dynamical system $(\Fc_{E}(X),\fuz{f})$ is DC3 or U-DC3, respectively.
	\end{enumerate}
\end{theorem}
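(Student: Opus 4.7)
The plan is to exploit two natural embeddings $x \mapsto \{x\}$ and $K \mapsto \chi_K$, and to handle the endograph metric $d_E$ separately because its saturation at $1$ forces parameter adjustments. For parts (a) and (b), the map $x \mapsto \{x\}$ is an isometric embedding of $(X,d)$ into $(\Kc(X),d_H)$ that intertwines $f$ with $\com{f}$, while $K \mapsto \chi_K$ is an isometric embedding of $(\Kc(X),d_H)$ into $(\Fc(X),\rho)$ for every $\rho \in \{d_{\infty}, d_{0}, d_{S}\}$ by Proposition~\ref{Pro:fuzzy.metrics}(d) and intertwines $\com{f}$ with $\fuz{f}$ by Proposition~\ref{Pro:fuz{f}}(c). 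Hence $\com{S} := \{ \{x\} \ ; \ x \in S \}$ and $\fuz{S} := \{ \chi_K \ ; \ K \in \com{S} \}$ inherit every scrambled-type property verbatim since all orbit distances are preserved exactly; statements (f) and (g) then follow at the system level because both embeddings are injective.

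For the $d_E$-statements (c), (e), (h) and (j), the key identity is $d_E(\chi_K,\chi_L) = \min\{d_H(K,L),1\}$ from Proposition~\ref{Pro:fuzzy.metrics}(d). It follows that for every threshold $\delta \in \ ]0,1]$ the events $\{j \in \NN \ ; \ d_E(\chi_{\com{f}^j(K)},\chi_{\com{f}^j(L)}) < \delta\}$ and $\{j \in \NN \ ; \ d_H(\com{f}^j(K),\com{f}^j(L)) < \delta\}$ coincide, so the distributional functions $\Phi_{(\chi_K,\chi_L)}$ and $\Phi^*_{(\chi_K,\chi_L)}$ computed in $d_E$ agree with $\Phi_{(K,L)}$ and $\Phi^*_{(K,L)}$ on $]0,1]$ and are identically $1$ on $]1,+\infty[$; similarly, the orbit $\limsup$ and $\liminf$ in $d_E$ equal $\min\{\cdot,1\}$ of those in $d_H$. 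Statement (c) follows by trimming $\eps$ to $\eps':=\min\{\eps,1\}$; the \D{1} U-scrambled case goes through because $\lim_{\delta\to 0^+}$ only probes $\delta\leq 1$. For (e), any D3 $(a,b)$-pair of $\com{S}$ with $a<1$ becomes a D3 $(a,\min\{b,1\})$-pair in $d_E$; statement (h) then follows at the system level, and (j) follows from (e) since $\diam_d(X)\leq 1$ forces every D3 $(a,b)$-pair of $\Kc(X)$ to satisfy $a<b\leq 1$.

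Statement (d), and its corollary (i), go through the MLYC-versus-DC2 bridge of Lemma~\ref{Lem:MLYC<->DC2}: apply part~(a) of that lemma to pass from a MLY $\eps$-scrambled set in $d_H$ (with orbit-distance bound $r$) to a D2 $\tfrac{\eps}{r+1}$-scrambled set in $d_H$; transfer to $d_E$ by the previous paragraph since $\tfrac{\eps}{r+1} \leq 1$; and then apply part~(b) of that lemma to obtain a MLY $(\tfrac{\eps}{r+1})^2$-scrambled set in $d_E$. Part (i) reduces to (d) since boundedness of $(X,d)$ passes to $(\Kc(X),d_H)$, making the orbit-distance bound required by (d) automatic.

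The delicate point in the plan is uniformity bookkeeping. For U-DC1, U-DC2 and U-DC3 the parameter replacements $\eps \mapsto \min\{\eps,1\}$, $(\eps,c) \mapsto (\min\{\eps,1\},c)$ and $(a,b) \mapsto (a,\min\{b,1\})$ preserve uniformity across the scrambled set; for U-\D{1} the supremum $\sup_{K\neq L}\Phi_{(\chi_K,\chi_L)}(\delta)$ computed in $d_E$ agrees with the corresponding $d_H$-supremum for all $\delta\leq 1$, so the condition $\lim_{\delta\to 0^+}=0$ transfers automatically; and for U-MLYC the uniform bound $r$ enters the final constant $(\tfrac{\eps}{r+1})^2$, which is exactly where the boundedness hypotheses in (d) and (i) earn their keep.
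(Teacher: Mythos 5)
Your proposal is correct and follows essentially the same route as the paper: the isometric embeddings $x\mapsto\{x\}$ and $K\mapsto\chi_K$ for parts (a), (b), (f), (g); the identity $d_{E}(\chi_K,\chi_L)=\min\{d_H(K,L),1\}$ with the truncations $\eps\mapsto\min\{\eps,1\}$ and $b\mapsto\min\{b,1\}$ for the $d_{E}$-statements; and the two-way passage through Lemma~\ref{Lem:MLYC<->DC2} for (d) and (i). The only (immaterial) difference is that you spell out explicitly that the distributional functions in $d_{E}$ and $d_H$ coincide on $]0,1]$, which the paper leaves implicit.
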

The notion of {\em uniform distributional chaos} used in \cite{MartinezPeRo2021_MAT_chaos} corresponds to that of U-DC1 introduced here in Subsection~\ref{SubSec_2.1:LY.MLY.DC}, so that Theorem~\ref{The:scrambled} is an extension of \cite[Proposition~3]{MartinezPeRo2021_MAT_chaos}. Moreover, we recall that the {\em $d$-diameter} of a metric space $(X,d)$ is defined as $\diam_d(X) := \sup\{ d(x,y) \ ; \ x,y \in X \}$.
\begin{proof}[Proof of Theorem~\ref{The:scrambled}]
	It is clear that statement (f) follows from statement (a), that statement (g) follows from (b), and that (h) follows from (c). Moreover, since $(X,d)$ is bounded if and only if so is the space $(\Kc(X),d_H)$, it is also easily checked that statement (i) follows from (d). In addition, statement~(j) follows from statement (e) because $\diam_d(X)\leq 1$ if and only if $\diam_{d_H}(\Kc(X))\leq 1$, and in such case every D3 $(a_{(K,L)},b_{(K,L)})$-pair $(K,L)$ for $\com{f}$ and $d_H$ must fulfill that $a_{(K,L)}<1$. Thus, to complete the proof we only need to check statements (a), (b), (c), (d) and (e).
	
	For statements (a) and (b) we can argue as in \cite[Proposition~3]{MartinezPeRo2021_MAT_chaos}, since
	\begin{align*}
		\iota_1 : X &\longrightarrow \Kc(X) \quad \text{ defined as } x \in X \mapsto \{x\} \in \Kc(X) \\[5pt]
		\iota_2 : \Kc(X) &\longrightarrow \Fc(X) \quad \text{ defined as } K \in \Kc(X) \mapsto \chi_K \in \Fc(X)
	\end{align*}
	are isometric embeddings from the metric space $(X,d)$ to $(\Kc(X),d_H)$, and from $(\Kc(X),d_H)$ to the metric spaces $\Fc_{\infty}(X)$, $\Fc_{0}(X)$ and $\Fc_{S}(X)$, respectively (see Proposition~\ref{Pro:fuzzy.metrics}). Thus, for statement (a) it is enough to consider $\com{S} := \iota_1(S)$, and for statement (b) it is enough to pick $\fuz{S} := \iota_2(\com{S})$.
	
	For statements (c), (d) and (e) one can also consider $\fuz{S} := \iota_2(\com{S})$. In fact, using Proposition~\ref{Pro:fuzzy.metrics} we have that $d_{E}(\iota_2(K),\iota_2(L)) = d_{E}(\chi_{K},\chi_{L}) = \min\{ d_H(K,L) , 1 \}$ for each pair $K,L \in \Kc(X)$, so that:
	\begin{enumerate}[(a)]
		\item[(c)] If $(K,L)$ is a LY, D1~or~a D2 $\eps$-pair, a \D{1} pair, or a \D{2} $(\eps,c)$-pair for the map~$\com{f}$ and the metric~$d_H$, setting $\eps':= \min\{\eps,1\}$ it follows that $(\chi_{K},\chi_{L})$ is a LY, D1~or~D2 $\eps'$-pair, that it is a \D{1} pair, or that it is a \D{2} $(\eps',c)$-pair for~$\fuz{f}$ and~$d_{E}$. In addition, if the set $\com{S}$ is \D{1} U-scrambled for~$\com{f}$ and the metric~$d_H$, it also trivially follows that $\iota_2(\com{S})$ is \D{1} U-scrambled for $\fuz{f}$ and $d_{E}$.
		
		\item[(d)] If $(K,L)$ is a MLY $\eps$-pair and $\{ d_H(\com{f}^j(K),\com{f}^j(L)) \ ; \ j \in \NN \}$ is bounded by $r>0$, Lemma~\ref{Lem:MLYC<->DC2} implies that $(K,L)$ is a D2 $\tfrac{\eps}{r+1}$-pair for $\com{f}$ and $d_H$, and hence  $(\chi_{K},\chi_{L})$ is a D2 $\tfrac{\eps}{r+1}$-pair for $\fuz{f}$ and $d_{E}$ by statement (c) proved above. Thus, $(\chi_{K},\chi_{L})$ is a MLY $(\tfrac{\eps}{r+1})^2$-pair for $\fuz{f}$ and $d_{E}$ by Lemma~\ref{Lem:MLYC<->DC2}.
		
		\item[(e)] If $(K,L)$ is a D3 $(a,b)$-pair for $\com{f}$ and $d_H$ with $a<1$, then $(K,L)$ is a D3 $(a,\min\{b,1\})$-pair for~$\com{f}$ and~$d_H$. It follows that $(\chi_{K},\chi_{L})$ is a D3 $(a,\min\{b,1\})$-pair for the map~$\fuz{f}$ and the metric~$d_{E}$.\qedhere
	\end{enumerate}
\end{proof}

In general, the converse of statement (f) of Theorem~\ref{The:scrambled} is false: in \cite[Theorem~10]{GarciaKwietLamOPe2009_NA_chaos} there is an example of dynamical system $(X,f)$ admitting no LY pairs and no D3 pairs but for which $(\Kc(X),d_H)$ and hence $(\Fc_{\infty}(X),\fuz{f})$, $(\Fc_{0}(X),\fuz{f})$, $(\Fc_{S}(X),\fuz{f})$ and $(\Fc_{E}(X),\fuz{f})$ are U-DC1. In Subsection~\ref{SubSec_3.2:counter} below we prove that the converse of statements (g), (h), (i) and (j) of Theorem~\ref{The:scrambled} are also false by constructing some counterexamples. These examples solve Question~\ref{Ques:Li-Yorke.transfer} in a very strong way, since Question~\ref{Ques:Li-Yorke.transfer} only asked whether the converse of statement~(g) of Theorem~\ref{The:scrambled} is false for LYC and U-DC1.

To obtain these counterexamples we are going to prove Theorem~\ref{The:pairs} below, which is an extension of \cite[Proposition~3]{MartinezPeRo2021_MAT_chaos} that is more sophisticated than Theorem~\ref{The:scrambled} above. Actually, in Theorem~\ref{The:pairs} we will prove that the existence of a unique pair of points in $(X,f)$ fulfilling any of the chaotic properties defined in Subsection~\ref{SubSec_2.1:LY.MLY.DC} is enough for $(\Fc(X),\fuz{f})$ to present an uncountable set of such pairs. However, before proving Theorem~\ref{The:pairs} we need a technical lemma regarding the metrics $d_{\infty}$, $d_{0}$, $d_{S}$, and $d_{E}$:

\begin{lemma}\label{Lem:estimations}
	Let $f:X\longrightarrow X$ be a continuous map on a metric space $(X,d)$, let $K, L \in \Kc(X)$ fulfilling that $K \subset L$ and that $K \neq L$, and set $\fuz{S} := \left\{ u^{\alpha} \ ; \ \alpha \in \ ]0,1[ \right\}$, where each $u^{\alpha} : X \longrightarrow \II$ is the function defined as
	\begin{equation}\label{eq:def.fuz{S}.u^alpha}
		u^{\alpha} := \max\{ \chi_K , \alpha\chi_L \} \quad \text{ for each } \alpha \in \ ]0,1[.
	\end{equation}
	Then, the set $\hat{S}$ is an uncountable subset of $\Fc(X)$ fulfilling that
	\begin{equation}\label{eq:d_infty}
		d_{\infty}(\fuz{f}^j(u^{\beta}),\fuz{f}^j(u^{\alpha})) = d_H(\com{f}^j(K),\com{f}^j(L)) \quad \text{ for all } j \in \NN_0,
	\end{equation}
	and that
	\begin{equation}\label{eq:rho}
		\rho(\fuz{f}^j(u^{\beta}),\fuz{f}^j(u^{\alpha})) = \min\{ d_H(\com{f}^j(K),\com{f}^j(L)) , \alpha-\beta \} \quad \text{ for all } j \in \NN_0,
	\end{equation}
	for every $0 < \beta < \alpha < 1$ and each metric $\rho \in \{ d_{0}, d_{S}, d_{E} \}$.
\end{lemma}

As previously mentioned, Lemma~\ref{Lem:estimations} is rather technical; however, this is mainly due to the need to work with the Skorokhod, sendograph, and endograph metrics, while the underlying idea of the result is quite simple. Indeed, the reader may refer to Figure~\ref{Fig:fuz_S} below, where we depict the endograph of two sets $u^{\beta}$ and $u^{\alpha}$ in $\hat{S}$, together with their images under $\fuz{f}^j$ for some $j \in \NN$.

\begin{figure}[H]
	\begin{center}
		\includegraphics[width=16.5cm]{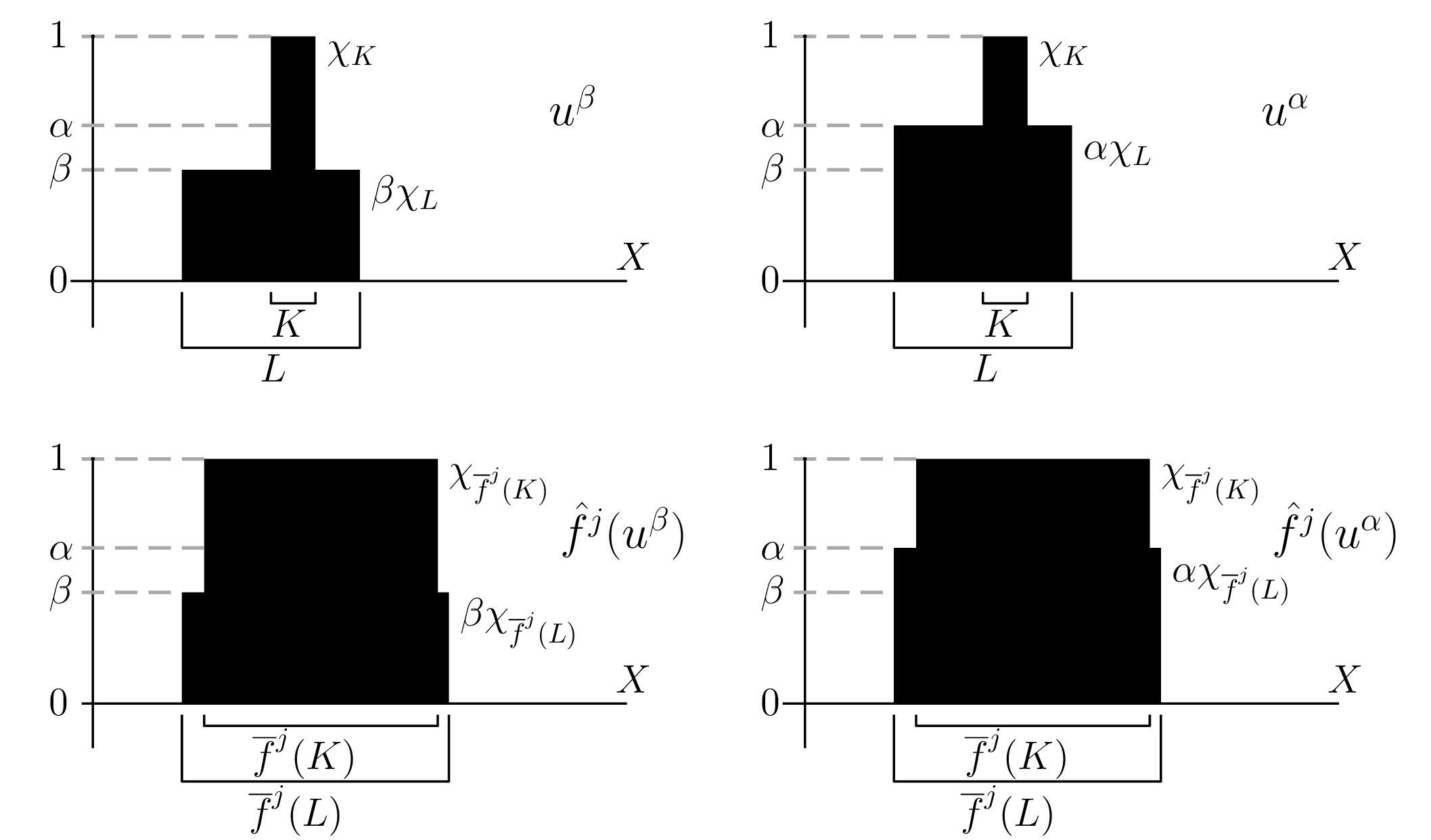}
		\caption{Representation of two elements $u^{\beta}, u^{\alpha} \in \fuz{S}$ and their $\fuz{f}^j$-iterates.}\label{Fig:fuz_S}
	\end{center}
\end{figure}

From a graphical viewpoint, it is not difficult to check that \eqref{eq:d_infty} and \eqref{eq:rho} must hold. Actually, for the representation in Figure~\ref{Fig:fuz_S} we have $\rho(u^{\beta},u^{\alpha}) = \alpha-\beta$ and $\rho(\fuz{f}^j(u^{\beta}),\fuz{f}^j(u^{\alpha})) = d_H(\com{f}^j(K),\com{f}^j(L))$ for every metric $\rho \in \{ d_{0}, d_{S}, d_{E} \}$. We now formalize this intuition:

\begin{proof}[Proof of Lemma~\ref{Lem:estimations}]
	Assume that the sets $K, L \in \Kc(X)$ fulfill that $K \subset L$ and that $K \neq L$, and define the set $\fuz{S} := \left\{ u^{\alpha} \ ; \ \alpha \in \ ]0,1[ \right\}$ as in the statement of Lemma~\ref{Lem:estimations}. Notice that, from the definition of each $u^{\alpha}$ given in \eqref{eq:def.fuz{S}.u^alpha}, it is clear that $\fuz{S}$ is an uncountable subset of $\Fc(X)$. Moreover, as represented in Figure~\ref{Fig:fuz_S}, by the definition of $\fuz{f}$ we have that
	\begin{equation}\label{eq:fuz{f}^j(u^alpha)}
		\fuz{f}^j(u^{\alpha}) = \max\left\{ \chi_{\com{f}^j(K)} , \alpha\chi_{\com{f}^j(L)} \right\} \quad \text{ for all } \alpha \in \ ]0,1[ \text{ and } j \in \NN_0.
	\end{equation}
	Using that $K \subset L$, it follows that $\com{f}^j(K) \subset \com{f}^j(L)$ for every $j \in \NN_0$ and we also have that
	\begin{equation}\label{eq:fuz{f}^j(u^alpha)_gamma}
		[\fuz{f}^j(u^{\alpha})]_{\gamma} =
		\begin{cases}
			\com{f}^j(L) & \text{ if } 0 \leq \gamma \leq \alpha, \\[5pt]
			\com{f}^j(K)  & \text{ if } \alpha < \gamma \leq 1,
		\end{cases}
		\quad \text{ for all }  \alpha \in \ ]0,1[, \ j \in \NN_0 \text{ and } \gamma \in \II.
	\end{equation}
	From \eqref{eq:fuz{f}^j(u^alpha)_gamma}, given any pair of distinct levels $0 < \beta < \alpha < 1$, any $j \in \NN_0$, and any $\gamma \in \II$, then
	\[
	d_H([\fuz{f}^j(u^{\beta})]_{\gamma},[\fuz{f}^j(u^{\alpha})]_{\gamma}) =
	\begin{cases}
		d_H(\com{f}^j(L),\com{f}^j(L)) = 0 & \text{ if } 0 \leq \gamma \leq \beta, \\[5pt]
		d_H(\com{f}^j(K),\com{f}^j(L)) & \text{ if } \beta < \gamma \leq \alpha, \\[5pt]
		d_H(\com{f}^j(K),\com{f}^j(K)) = 0 & \text{ if } \alpha < \gamma \leq 1.
	\end{cases}
	\]
	Thus, given $0 < \beta < \alpha < 1$, for the metric $d_{\infty}$ we easily obtain \eqref{eq:d_infty} since
	\[
	d_{\infty}(\fuz{f}^j(u^{\beta}),\fuz{f}^j(u^{\alpha})) = \sup_{\gamma\in\II} d_H([\fuz{f}^j(u^{\beta})]_{\gamma},[\fuz{f}^j(u^{\alpha})]_{\gamma}) = d_H(\com{f}^j(K),\com{f}^j(L)) \quad \text{ for all } j \in \NN_0.
	\]
	To prove \eqref{eq:rho}, using that $d_{E}(u,v) \leq d_{S}(u,v) \leq d_{0}(u,v) \leq d_{\infty}(u,v)$ for every pair $u,v \in \Fc(X)$ as stated in Proposition~\ref{Pro:fuzzy.metrics}, it is enough to check the following two inequalities:
	\begin{enumerate}[--]
		\item \textbf{Inequality 1}: \textit{We have that $d_{0}(\fuz{f}^j(u^{\beta}),\fuz{f}^j(u^{\alpha})) \leq \min\{ d_H(\com{f}^j(K),\com{f}^j(L)) , \alpha-\beta \}$ for all $j \in \NN_0$}. Fix any $j \in \NN_0$ and note that, by Proposition~\ref{Pro:fuzzy.metrics} and \eqref{eq:d_infty}, we already have the inequality
		\[
		d_{0}(\fuz{f}^j(u^{\beta}),\fuz{f}^j(u^{\alpha})) \leq d_{\infty}(\fuz{f}^j(u^{\beta}),\fuz{f}^j(u^{\alpha})) = d_H(\com{f}^j(K),\com{f}^j(L)).
		\]
		Now, consider the 2-piecewise-linear map $\xi_{\alpha,\beta}:\II\longrightarrow\II$ defined as
		\begin{equation*}
			\xi_{\alpha,\beta}(\gamma) :=
			\begin{cases}
				\tfrac{\beta}{\alpha}\gamma & \text{ if } 0\leq \gamma \leq \alpha,\\[5pt]
				\tfrac{1-\beta}{1-\alpha}\gamma + \tfrac{\beta-\alpha}{1-\alpha} & \text{ if } \alpha < \gamma \leq 1,
			\end{cases}
		\end{equation*}
		which fulfills that $\xi_{\alpha,\beta}(0)=0$, that $\xi_{\alpha,\beta}(\alpha)=\beta$ and that $\xi_{\alpha,\beta}(1)=1$. It is not hard to check that
		\begin{equation}\label{eq:xi_alpha_beta=alpha-beta}
			\sup_{\gamma\in\II} |\xi_{\alpha,\beta}(\gamma)-\gamma| = \left|\xi_{\alpha,\beta}\left(\alpha\right) - \alpha \right| = \left|\beta - \alpha \right| = \alpha-\beta,
		\end{equation}
		and that $\xi_{\alpha,\beta}^{-1}(\beta) = \alpha$. Thus, by \eqref{eq:fuz{f}^j(u^alpha)_gamma} we have that
		\[
		\left[ \xi_{\alpha,\beta} \circ \left( \fuz{f}^j(u^{\alpha}) \right) \right]_{\gamma} = \left[ \fuz{f}^j(u^{\alpha}) \right]_{\xi_{\alpha,\beta}^{-1}(\gamma)} =
		\begin{cases}
			\com{f}^j(L) & \text{ if } 0 \leq \gamma \leq \beta,\\[5pt]
			\com{f}^j(K) & \text{ if } \beta < \gamma \leq 1.
		\end{cases}
		\]
		This implies that $\fuz{f}^j(u^{\beta}) = \xi_{\alpha,\beta} \circ \fuz{f}^j(u^{\alpha})$, so that $d_{\infty}(\fuz{f}^j(u^{\beta}),\xi_{\alpha,\beta} \circ \fuz{f}^j(u^{\alpha}))=0$. Hence
		\[
		d_{0}(\fuz{f}^j(u^{\beta}),\fuz{f}^j(u^{\alpha})) \leq \max\left\{ \sup_{\alpha\in\II} |\xi_{\alpha,\beta}(\alpha)-\alpha| \ , \ d_{\infty}(\fuz{f}^j(u^{\beta}),\xi_{\alpha,\beta} \circ \fuz{f}^j(u^{\alpha})) \right\} \overset{\eqref{eq:xi_alpha_beta=alpha-beta}}{=} \alpha-\beta.
		\]
		Intuitively, to prove this inequality we lower the $\alpha$-level of $\fuz{f}^{j}(u^{\alpha})$ to its $\beta$-level by means of a homeomorphism $\xi_{\alpha,\beta}$ with norm equal to $\alpha-\beta$. See also Figure~\ref{Fig:fuz_S} for illustration.
		
		\item \textbf{Inequality 2}: \textit{We have that $d_{E}(\fuz{f}^j(u^{\beta}),\fuz{f}^j(u^{\alpha})) \geq \min\{ d_H(\com{f}^j(K),\com{f}^j(L)) , \alpha-\beta \}$ for all $j \in \NN_0$}. Fix any $j \in \NN_0$ and note that, if $d_H(\com{f}^j(K),\com{f}^j(L))=0$, then by Proposition~\ref{Pro:fuzzy.metrics} we would have that
		\[
		0 \leq d_{E}(\fuz{f}^j(u^{\beta}),\fuz{f}^j(u^{\alpha})) \leq d_{\infty}(\fuz{f}^j(u^{\beta}),\fuz{f}^j(u^{\alpha})) = d_H(\com{f}^j(K),\com{f}^j(L)) = 0.
		\]
		Assume now that $d_H(\com{f}^j(K),\com{f}^j(L))\neq0$ and let $0 \leq \delta < \min\{ d_H(\com{f}^j(K),\com{f}^j(L)) , \alpha-\beta \}$ be arbitrary but fixed. We will check that $\eend(\fuz{f}^j(u^{\alpha})) \not\subset \eend(\fuz{f}^j(u^{\beta})) + \delta$, so that $d_{E}(\fuz{f}^j(u^{\beta}),\fuz{f}^j(u^{\alpha}))>\delta$ by statement~(a) of Proposition~\ref{Pro:Hausdorff}. In fact, since $\com{f}^j(K) \subset \com{f}^j(L)$ but $d_H(\com{f}^j(K),\com{f}^j(L))>\delta$ there exists some $x \in \com{f}^j(L)$ for which $x \notin \com{f}^j(K)+\delta$. By \eqref{eq:fuz{f}^j(u^alpha)} we have that $(x,\alpha) \in \eend(\fuz{f}^j(u^{\alpha}))$ with
		\[
		\inf_{(y,\gamma) \in \eend(\fuz{f}^j(u^{\beta}))} \com{d}( (x,\alpha) , (y,\gamma) ) = \inf_{(y,\gamma) \in \eend(\fuz{f}^j(u^{\beta}))} \max\{ d(x,y) , |\alpha-\gamma| \},
		\]
		and for each $(y,\gamma) \in \eend(\fuz{f}^j(u^{\beta}))$, using again \eqref{eq:fuz{f}^j(u^alpha)}, we have that
		\begin{align*}
			\max\{ d(x,y) , |\alpha-\gamma| \} &= \left\{
			\begin{array}{ll}
				\max\{ d(x,y) , \alpha \} & \text{ if } \gamma=0 \text{ and } y \in X, \hspace{1cm} \\[5pt]
				\max\{ d(x,y) , |\alpha-\gamma| \} & \text{ if } 0 < \gamma \leq \beta \text{ and } y \in \com{f}^j(L), \\[5pt]
				\max\{ d(x,y) , |\alpha-\gamma| \} & \text{ if } 0 < \gamma \leq 1 \text{ and } y \in \com{f}^j(K),
			\end{array}
			\right\} \\[7.5pt]
			&\geq \left\{
			\begin{array}{ll}
				\alpha & \text{ if } \gamma=0 \text{ and } y \in X, \\[5pt]
				\alpha-\beta & \text{ if } 0 < \gamma \leq \beta \text{ and } y \in \com{f}^j(L), \\[5pt]
				d(x,y) & \text{ if } 0 < \gamma \leq 1 \text{ and } y \in \com{f}^j(K),
			\end{array}
			\right\} > \delta.
		\end{align*}
		The arbitrariness of ``$\delta$'' shows that $d_{E}(\fuz{f}^j(u^{\beta}),\fuz{f}^j(u^{\alpha})) \geq \min\{ d_H(\com{f}^j(K),\com{f}^j(L)) , \alpha-\beta \}$. Intuitively, the point $(x,\alpha)$ belongs to $\eend(\fuz{f}^j(u^{\alpha}))$, but there is no point in $\eend(\fuz{f}^j(u^{\beta}))$ closer to $(x,\alpha)$ than $(x,\beta)$. See again Figure~\ref{Fig:fuz_S} and choose $(x,\alpha)$ so that $x$ is one of endpoints of $\com{f}^j(L)$.\qedhere
	\end{enumerate}
\end{proof}

We are now ready to prove Theorem~\ref{The:pairs}, extending \cite[Proposition~3]{MartinezPeRo2021_MAT_chaos} in a very powerful way:

\begin{theorem}\label{The:pairs}
	Let $f:X\longrightarrow X$ be a continuous map acting on a metric space $(X,d)$. Hence:\\[-20pt]
	\begin{enumerate}[{\em(a)}]
		\item If there exists a LY, MLY, D1, \D{1}, D2, \D{2} or a D3 pair $(x,y)$ for~$f$ and the metric~$d$, then there exists a LY, MLY, D1, \D{1}, D2, \D{2} or a D3 pair $(K,L)$ for~$\com{f}$ and~$d_H$ such that $K \subset L$.~In particular, such a pair $(K,L)$ can be chosen fulfilling that
		\[
		d_H(\com{f}^j(K),\com{f}^j(L))=d(f^j(x),f^j(y)) \quad \text{ for all } j \in \NN_0.
		\]
		
		\item If there exists a LY, MLY, D1, \D{1}, D2, \D{2} or a D3 pair $(K,L)$ for~$\com{f}$ and~$d_H$ fulfilling $K \subset L$, then $(\Fc_{\infty}(X),\fuz{f})$ is U-LYC, U-MLYC, U-DC1, U-\DC{1}, U-DC2, U-\DC{2} or U-DC3, respectively.
		
		\item If there exists a LY, D1, \D{1}, D2 or a \D{2} pair $(K,L)$ for~$\com{f}$ and~$d_H$ fulfilling that $K \subset L$, then~$(\Fc_{0}(X),\fuz{f})$, $(\Fc_{S}(X),\fuz{f})$ and $(\Fc_{E}(X),\fuz{f})$ are LYC, DC1, \DC{1}, DC2 or \DC{2}, respectively.
		
		\item If there exists a MLY pair $(K,L)$ for~$\com{f}$ and~$d_H$ with $K \subset L$, and if $\{ d_H(\com{f}^j(K),\com{f}^j(L)) \ ; \ j \in \NN \}$ is bounded, then the systems $(\Fc_{0}(X),\fuz{f})$, $(\Fc_{S}(X),\fuz{f})$ and $(\Fc_{E}(X),\fuz{f})$ are MLYC.
	\end{enumerate}
\end{theorem}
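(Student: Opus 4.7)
For part (a), my plan is to take $K := \{x\}$ and $L := \{x,y\}$, which satisfies $K \subset L$; a direct calculation then gives $\com{f}^j(K) = \{f^j(x)\}$, $\com{f}^j(L) = \{f^j(x),f^j(y)\}$, and hence $d_H(\com{f}^j(K),\com{f}^j(L)) = d(f^j(x),f^j(y))$. Since every Li-Yorke, mean Li-Yorke, and distributional pair condition depends only on the orbit-distance sequence, the pair $(K,L)$ will automatically inherit whichever such property $(x,y)$ enjoys.

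The core of the theorem lies in parts (b)--(d), all three of which I would attack via a single construction: an uncountable scrambled family in $\Fc(X)$ built from the given pair $(K,L)$ with $K \subset L$. For each $t \in (0,1)$ I set $u_t := \max\{\chi_K,\, t\chi_L\} \in \Fc(X)$; its $\alpha$-levels are $(u_t)_\alpha = L$ for $\alpha \in (0,t]$ and $(u_t)_\alpha = K$ for $\alpha \in (t,1]$. By Proposition~\ref{Pro:fuz{f}} this two-plateau structure is preserved under iteration, so $[\fuz{f}^n(u_t)]_\alpha = \com{f}^n(L)$ on $(0,t]$ and $\com{f}^n(K)$ on $(t,1]$. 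The key step is then to establish, for all $s<t$ in $(0,1)$,
\[
d_\infty\!\left(\fuz{f}^n(u_s),\fuz{f}^n(u_t)\right) = d_H\!\left(\com{f}^n(K),\com{f}^n(L)\right),
\]
and, via an explicit Skorokhod reparametrization (sending $t\mapsto s$ piecewise-linearly) for $d_0$, together with a short case analysis of sendograph/endograph positions for $d_S$ and $d_E$ (exploiting that $u_s \leq u_t$ makes $\send(\fuz{f}^n(u_s)) \subset \send(\fuz{f}^n(u_t))$, and similarly for $\eend$),
\[
d_\rho\!\left(\fuz{f}^n(u_s),\fuz{f}^n(u_t)\right) = \min\bigl\{d_H(\com{f}^n(K),\com{f}^n(L)),\,t-s\bigr\} \quad \text{for } \rho \in \{d_0,d_S,d_E\}.
\]
Proving this second identity (especially for $d_S$ and $d_E$) is the main technical obstacle; the computation hinges on the two-plateau structure of $u_t$ and on the nesting forced by $K\subset L$.

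Part (b) will then follow immediately: the first identity is uniform in $s,t$, so $\{u_t:t\in(0,1)\}$ inherits each of U-LYC, U-MLYC, U-DC1, U-\DC{1}, U-DC2, U-\DC{2}, and U-DC3 from the pair $(K,L)$ with the same constants. For part (c), the second identity will give $\Phi^*_{(u_s,u_t)}(\delta) = \Phi^*_{(K,L)}(\delta)$ and $\Phi_{(u_s,u_t)}(\delta) = \Phi_{(K,L)}(\delta)$ whenever $\delta \leq t-s$, while both functions equal $1$ for $\delta > t-s$; hence every LY, D1, \D{1}, D2, or \D{2} property of $(K,L)$ with parameter $\eps$ transfers to $(u_s,u_t)$ with the adjusted parameter $\min\{\eps,t-s\}$, yielding the required (non-uniform) chaos in $(\Fc_0(X),\fuz{f})$, $(\Fc_S(X),\fuz{f})$, and $(\Fc_E(X),\fuz{f})$.

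Finally, for part (d) I would write $a_j := d_H(\com{f}^j(K),\com{f}^j(L))$ and fix a uniform bound $r>0$ for $\{a_j\}$. Since $(K,L)$ is an MLY $\eps$-pair, Lemma~\ref{Lem:MLYC<->DC2} turns it into a D2 $\tfrac{\eps}{r+1}$-pair, so the set $A := \{j \in \NN : a_j \geq \tfrac{\eps}{r+1}\}$ has $\dsup(A) \geq \tfrac{\eps}{r+1}$. For any $s<t$ in $(0,1)$, on $A$ one has $\min\{a_j,t-s\} \geq \min\{\tfrac{\eps}{r+1},t-s\}$, so
\[
\limsup_{n\to\infty} \tfrac{1}{n}\sum_{j=1}^n \min\{a_j,t-s\} \;\geq\; \min\!\left\{\tfrac{\eps}{r+1},t-s\right\}\cdot\tfrac{\eps}{r+1} \;>\; 0,
\]
while $\liminf \tfrac{1}{n}\sum \min\{a_j,t-s\} \leq \liminf \tfrac{1}{n}\sum a_j = 0$. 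Thus each $(u_s,u_t)$ is an MLY pair for $\fuz{f}$ under any of $d_0,d_S,d_E$, and $\{u_t:t\in(0,1)\}$ is the required uncountable MLY-scrambled set.
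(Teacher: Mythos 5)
Your proposal is correct and follows essentially the same route as the paper: the same choice $K=\{x\}$, $L=\{x,y\}$ in part (a), the same one-parameter family $u_t=\max\{\chi_K,t\chi_L\}$ for parts (b)--(d), the same two key identities (exact equality for $d_\infty$ and the $\min\{d_H(\com{f}^n(K),\com{f}^n(L)),t-s\}$ formula for $d_0,d_S,d_E$), and the same proof strategy for them via a piecewise-linear Skorokhod reparametrization for the upper bound and an endograph estimate for the lower bound. The only cosmetic difference is in part (d), where you verify the mean Li--Yorke condition for $(u_s,u_t)$ by a direct Ces\`aro computation, whereas the paper applies Lemma~\ref{Lem:MLYC<->DC2} a second time to pass from D2 back to MLY; both are valid and of comparable length.
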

\begin{proof}
	(a): If $(x,y)$ is a LY, MLY, D1, \D{1}, D2, \D{2} or a D3 pair for the map~$f$ and the metric~$d$, then one can consider the sets $K:=\{x\}$ and $L:=\{x,y\}$ in $\Kc(X)$. Note that
	\[
	\com{f}^j(K)=\{f^j(x)\} \quad \text{ and } \quad \com{f}^j(L)=\{f^j(x),f^j(y)\} \quad \text{ for all } j \in \NN_0,
	\]
	which easily implies that $d_H(\com{f}^j(K),\com{f}^j(L)) = d(f^j(x),f^j(y))$ for all $j \in \NN_0$. We have that $K \subset L$, and it follows that $(K,L)$ is a LY, MLY, D1, \D{1}, D2, \D{2} or a D3 pair for~$\com{f}$ and~$d_H$, respectively.
	
	To prove (b), (c), and (d), we will use Lemma~\ref{Lem:estimations}. Actually, notice that when $(K,L)$ is a LY, MLY, D1, \D{1}, D2, \D{2} or a D3 pair for~$\com{f}$ and~$d_H$, then one necessarily has that $K \neq L$. Thus:
	
	(b): Let $(K,L)$ be as in statement (b). For $\fuz{S} \subset \Fc(X)$ defined as in Lemma~\ref{Lem:estimations}, by \eqref{eq:d_infty} we get
	\[
	d_{\infty}(\fuz{f}^j(u),\fuz{f}^j(v))=d_H(\com{f}^j(K),\fuz{f}^j(L)) \quad \text{ for every pair } u,v \in \fuz{S} \text{ with } u \neq v.
	\]
	Hence, if the pair $(K,L)$ is a LY, MLY, D1 or a D2 $\eps$-pair, if it is a \D{1} pair, if it is a \D{2} $(\eps,c)$-pair, or if it is a D3 $(a,b)$-pair for~$\com{f}$ and~$d_H$, then the uncountable set $\fuz{S}$ is LY, MLY, D1 or D2 $\eps$-scrambled, \D{1}~U-scrambled, \D{2}~$(\eps,c)$-scrambled, or D3 $(a,b)$-scrambled for~$\fuz{f}$ and~$d_{\infty}$, respectively.
	
	(c): Let $(K,L)$ be as in statement (c). Pick any metric $\rho \in \{ d_{0} , d_{S} , d_{E} \}$ and then consider the uncountable set $\fuz{S} = \{ u^{\alpha} \ ; \ \alpha \in \ ]0,1[ \}$ defined as in Lemma~\ref{Lem:estimations}. Hence, if the pair $(K,L)$ is a LY, D1 or a D2 $\eps$-pair, if it is a \D{1} pair, or if it is a \D{2} $(\eps,c)$-pair for~$\com{f}$ and~$d_H$, by \eqref{eq:rho} we have that given two values $0<\beta<\alpha<1$ the pair $(u^{\beta},u^{\alpha})$ is a LY, D1 or a D2 $\min\{\eps,\alpha-\beta\}$-pair, that $(u^{\beta},u^{\alpha})$ is a \D{1} pair, or that $(u^{\beta},u^{\alpha})$ is a \D{2} $(\min\{\eps,\alpha-\beta\},c)$-pair for~$\fuz{f}$ and~$\rho$, respectively. Hence, the uncountable set $\fuz{S}$ is LY, D1, \D{1}, D2 or \D{2} scrambled for~$\fuz{f}$ and~$\rho$, respectively.
	
	(d): Let $(K,L)$ be as in statement (d). Pick any metric $\rho \in \{ d_{0} , d_{S} , d_{E} \}$ and then consider the uncountable set $\fuz{S} = \{ u^{\alpha} \ ; \ \alpha \in \ ]0,1[ \}$ defined as in Lemma~\ref{Lem:estimations}. Hence, if the pair $(K,L)$ is a MLY $\eps$-pair for~$\com{f}$ and~$d_H$, and if the set $\{ d_H(\com{f}^j(K),\com{f}^j(L)) \ ; \ j \in \NN \}$ is bounded by some positive value $r>0$, then~Lemma~\ref{Lem:MLYC<->DC2} implies that $(K,L)$ is a D2 $\tfrac{\eps}{r+1}$-pair for $\com{f}$ and $d_H$. Using this fact and arguing as in the proof of statement (c), given $0<\beta<\alpha<1$ we have that $(u^{\beta},u^{\alpha})$ is a D2 $\min\{ \tfrac{\eps}{r+1} , \alpha-\beta \}$-pair for $\fuz{f}$ and $\rho$. Applying again Lemma~\ref{Lem:MLYC<->DC2} we have that $(u^{\beta},u^{\alpha})$ is a MLYC $(\min\{ \tfrac{\eps}{r+1} , \alpha-\beta \})^2$-pair for the map~$\fuz{f}$ and the metric~$\rho$. Hence, the uncountable set $\fuz{S}$ is MLY scrambled for~$\fuz{f}$ and~$\rho$.
\end{proof}

\begin{remark}
	The statements and proof of Lemma~\ref{Lem:estimations} and Theorem~\ref{The:pairs} admit several comments:\\[-20pt]
	\begin{enumerate}[(1)]
		\item The idea behind Theorem~\ref{The:pairs} is showing how easily the Li-Yorke chaotic-type notions transfer from the systems $(X,f)$ or $(\Kc(X),\com{f})$ to $(\Fc(X),\fuz{f})$. See Subsection~\ref{SubSec_3.2:counter} for some applications.\\[-17.5pt]
		
		\item For $d_{\infty}$, $d_{S}$ and $d_{E}$ we could have considered the set $\fuz{S}$ as defined in Lemma~\ref{Lem:estimations} together with the two extra sets $u^1 := \chi_L$ and $u^0 := \chi_K$, obtaining the same estimates~\eqref{eq:d_infty}~and~\eqref{eq:rho}. However, for~$d_{0}$, although adding these two fuzzy sets to $\fuz{S}$ does not change its scrambled behaviour and the result remains valid in this case, the estimates obtained in \eqref{eq:rho} for such a metric fail.\\[-17.5pt]
		
		\item The U-LYC and U-DC2 parts of statement (b) in Theorem~\ref{The:pairs} together with the LYC and DC2 parts of statement (c) in Theorem~\ref{The:pairs} also follow when the sets of the pair $(K,L)$ are not ordered by inclusion. Indeed, let $(K,L) \in \Kc(X)\times\Kc(X)$ be any LY or D2 pair for $\com{f}$ with respect to $d_H$. Then one can verify that at least one of the pairs $(K,K\cup L)$ or $(K\cup L,L)$ is again a LY or D2 pair for $\com{f}$ with respect to $d_H$, by using the readily verified identity
		\begin{equation}\label{eq:d_H(K,L)=max{d_H(KUL,L),d_H(K,KUL)}}
			d_H(K_1,K_2) = \max\{ d_H(K_1\cup K_2,K_2) , d_H(K_1,K_1\cup K_2) \} \quad \text{ for all } K_1,K_2 \in \Kc(X).
		\end{equation}
		The conclusion then would follow from Theorem~\ref{The:pairs}. Let us explain the precise argument:
		\begin{enumerate}[--]
			\item If $(K,L)$ is a LY $\eps$-pair for some $\eps>0$, then by definition there are infinitely many integers $n \in \NN$ such that $d_H(\com{f}^{n}(K),\com{f}^{n}(L)) \geq \tfrac{\eps}{2}$. Thus, by \eqref{eq:d_H(K,L)=max{d_H(KUL,L),d_H(K,KUL)}}, we either have that
			\[
			d_H(\com{f}^{n}(K),\com{f}^{n}(K\cup L)) = d_H(\com{f}^{n}(K),\com{f}^{n}(K)\cup\com{f}^{n}(L)) = d_H(\com{f}^{n}(K),\com{f}^{n}(L)) \geq \tfrac{\eps}{2}
			\]
			or that
			\[
			d_H(\com{f}^{n}(K\cup L),\com{f}^{n}(L)) = d_H(\com{f}^{n}(K)\cup\com{f}^{n}(L),\com{f}^{n}(L)) = d_H(\com{f}^{n}(K),\com{f}^{n}(L)) \geq \tfrac{\eps}{2}
			\]
			is fulfilled infinitely many times. In both cases, since by \eqref{eq:d_H(K,L)=max{d_H(KUL,L),d_H(K,KUL)}} we also have the inequalities
			\[
			\max\{ d_H(\com{f}^j(K),\com{f}^j(K\cup L)) , d_H(\com{f}^j(K\cup L),\com{f}^j(L)) \} \leq d_H(\com{f}^j(K),\com{f}^j(L)) \quad \text{ for all } j \in \NN,
			\]
			we either have that $(K,K\cup L)$ or that $(K\cup L,L)$ is a LY $\tfrac{\eps}{2}$-pair for~$\com{f}$ and~$d_H$. From there, Theorem~\ref{The:pairs} shows that there exists an uncountable subset of $\Fc(X)$ that is LY $\tfrac{\eps}{2}$-scrambled for $(\Fc_{\infty}(X),\fuz{f})$ but also LY scrambled for $(\Fc_{0}(X),\fuz{f})$, $(\Fc_{S}(X),\fuz{f})$ and $(\Fc_{E}(X),\fuz{f})$.
			
			\item If $(K,L)$ is a D2 $\eps$-pair for some $0<\eps\leq 1$, then we use the very well-known properties of the lower and upper densities, namely that $1-\dsup(A) = \dinf(\NN \setminus A)$ for every $A \subset \NN$, and that $\dsup(A \cup B) \leq \dsup(A) + \dsup(B)$ for all $A,B \subset \NN$. Thus, by \eqref{eq:d_H(K,L)=max{d_H(KUL,L),d_H(K,KUL)}}, we obtain
			\[
			1-\dsup(\NN \setminus (A \cap B)) = \dinf(A\cap B) = \Phi_{(K,L)}(\eps) \leq 1-\eps,
			\]
			where
			\[
			\Phi_{(K,L)}(\eps) = \dinf\bigl(\{ j \in \NN \, ; \, d_H(\com{f}^j(K),\com{f}^j(L))<\eps \}\bigr),
			\]
			and the sets $A,B \subset \NN$ are defined by
			\begin{align*}
				A &:= \{ j \in \NN \, ; \, d_H(\com{f}^j(K),\com{f}^j(K\cup L))<\eps \}, \\
				B &:= \{ j \in \NN \, ; \, d_H(\com{f}^j(K\cup L),\com{f}^j(L))<\eps \}.
			\end{align*}
			Using again the above properties, it follows that
			\[
			\dsup(\NN\setminus A) + \dsup(\NN\setminus B) \geq \dsup\bigl((\NN\setminus A) \cup (\NN\setminus B)\bigr) = \dsup(\NN \setminus (A \cap B)) \geq \eps > 0,
			\]
			and hence there exists $0<\eps'\leq\eps$ such that
			\begin{equation}\label{eq:eps'}
				\dsup(\NN\setminus A) \geq \eps' \quad \text{or} \quad \dsup(\NN\setminus B) \geq \eps'.
			\end{equation}
			In particular, using once more the properties of the densities, for such $\eps'$ we have that
			\[
			\Phi_{(K,K\cup L)}(\eps) = \dinf(A) \leq 1-\eps' \quad \text{or} \quad \Phi_{(K\cup L,L)}(\eps) = \dinf(B) \leq 1-\eps',
			\]
			which implies that
			\[
			\Phi_{(K,K\cup L)}(\eps') \leq 1-\eps' \quad \text{or} \quad \Phi_{(K\cup L,L)}(\eps') \leq 1-\eps'.
			\]
			Moreover, for any $\delta>0$, it follows from \eqref{eq:d_H(K,L)=max{d_H(KUL,L),d_H(K,KUL)}} that
			\[
			1 = \Phi^*_{(K,L)}(\delta) = \dsup(A \cap B) \leq \min\{ \dsup(A) , \dsup(B) \} = \min\{ \Phi^*_{(K,K\cup L)}(\delta) , \Phi^*_{(K\cup L,L)}(\delta) \} \leq 1.
			\]
			Therefore, either $(K,K\cup L)$ or $(K\cup L,L)$ is a D2 $\eps'$-pair for $\com{f}$ with respect to $d_H$. From there, Theorem~\ref{The:pairs} shows that there exists an uncountable subset of $\Fc(X)$ that is D2 $\eps'$-scrambled for $(\Fc_{\infty}(X),\fuz{f})$, and also D2 scrambled for $(\Fc_{0}(X),\fuz{f})$, $(\Fc_{S}(X),\fuz{f})$, and $(\Fc_{E}(X),\fuz{f})$.\\[-17.5pt]
		\end{enumerate}
		
		\item The previous reasoning does not work for MLYC, DC1, \DC{1}, \DC{2}, DC3, and their uniform variants. In fact, for mean Li-Yorke chaos and these types of distributional chaos it is not enough to have infinitely many times the sets $\fuz{f}^j(K)$ and $\fuz{f}^j(K\cup L)$ or $\fuz{f}^j(K\cup L)$ and $\fuz{f}^j(L)$ far apart, nor is it enough to obtain some $\eps'>0$ satisfying the condition in \eqref{eq:eps'}; one also needs to control the frequency with which these situations occur. Actually, for MLYC one has to control the means of such distances, for DC1 one needs to achieve $\eps'=1$ in \eqref{eq:eps'}, for \DC{1} one has to ensure that $\Phi_{(K,K\cup L)}(\delta)$ or $\Phi_{(K\cup L,L)}(\delta)$ converge to $0$ as $\delta\to 0$, and for \DC{2} and DC3 one has to control how $\Phi_{(K,K\cup L)}$ or $\Phi_{(K\cup L,L)}$ evolve compared to $\Phi^*_{(K,K\cup L)}$ or $\Phi^*_{(K\cup L,L)}$ respectively.
		
		\item In Theorem~\ref{The:pairs} we have not considered DC3 for $(\Fc_{0}(X),\fuz{f})$, $(\Fc_{S}(X),\fuz{f})$ and $(\Fc_{E}(X),\fuz{f})$. This will not be a problem in our counterexamples because \DC{2} (and hence DC1) implies DC3.
	\end{enumerate}
\end{remark}

\subsection{Some counterexamples}\label{SubSec_3.2:counter}

In this subsection we present three counterexamples. With Example~\ref{Exa_1:main} we show, via Theorem~\ref{The:pairs}, that the converse of statements (g), (h), (i) and (j) in Theorem~\ref{The:scrambled} are false. This example provides a strong positive solution to Question~\ref{Ques:Li-Yorke.transfer}. In their turn, with Examples~\ref{Exa_2:E_MLYC}~and~\ref{Exa_3:E_DC3} we show that the boundedness assumptions of statements (i) and (j) in Theorem~\ref{The:scrambled} can not be weakened.

\begin{example}\label{Exa_1:main}
	\textit{There exists a continuous function $f:X\longrightarrow X$ on a metric space $(X,d)$ such that:}\\[-17.5pt]
		\begin{enumerate}[(1)]
			\item \textit{Every $S \subset X$ that is LY or D3 scrambled for $f$ and $d$ has at most two points.}
			
			\item \textit{Every $\com{S} \subset \Kc(X)$ that is LY or D3 scrambled for $\com{f}$ and $d_H$ is finite.}
			
			\item \textit{The dynamical system $(\Fc_{\infty}(X),\fuz{f})$ is U-DC1 and U-MLYC.}
			
			\item \textit{The systems $(\Fc_{0}(X),\fuz{f})$, $(\Fc_{S}(X),\fuz{f})$ and $(\Fc_{E}(X),\fuz{f})$ are DC1 and MLYC.}
		\end{enumerate}
	\begin{proof}
		Let $A \subset \NN$ fulfilling that $\dinf(A)=0$ but that $\dsup(A)=1$. These sets are well-known to exist and not difficult to be constructed. Consider $X := \NN_0\times\{0,1\}$ and let $d:X\times X\longrightarrow[0,+\infty[$ be defined for each pair $(n_1,n_2), (m_1,m_2) \in X$ as
		\[
		d( (n_1,n_2) , (m_1,m_2) ) := 
		\begin{cases}
			0 & \text{ if } (n_1,n_2)=(m_1,m_2), \\[2pt]
			|n_1-m_1| & \text{ if } n_1 \neq m_1, \\[2pt]
			1 & \text{ if } n_1 = m_1 \in \NN_0 \setminus A \text{ and } n_2 \neq m_2, \\[2pt]
			\tfrac{1}{n_1} & \text{ if } n_1 = m_1 \in A \text{ and } n_2\neq m_2.
		\end{cases}
		\]
		We leave to the reader checking that $(X,d)$ is a metric space and to verify that the topology generated by $d$ is discrete. Hence, every map on $(X,d)$ is continuous and $\Kc(X)$ is formed by the finite subsets of $X$. Consider now the map $f:X\longrightarrow X$ defined as
		\[
		f((n_1,n_2)) := (n_1+1,n_2) \quad \text{ for each } (n_1,n_2) \in X.
		\]
		To check that the dynamical system $(X,f)$ fulfills the required conditions, let us prove that $((0,0),(0,1))$ is a D1 $\eps$-pair for all $0<\eps<1$. In fact, applying $j$-times the map $f$ we have that
		\[
		d(f^j((0,0)),f^j((0,1))) = d((j,0),(j,1)) =
		\begin{cases}
			1 & \text{ if } j \in \NN\setminus A, \\[2pt]
			\tfrac{1}{j} & \text{ if } j \in A.
		\end{cases}
		\]
		An easy computation and the properties of the lower density imply, for each $0<\eps<1$, that
		\[
		\Phi_{((0,0),(0,1))}(\eps) = \dinf\left( A \ \cap \ ]\tfrac{1}{\eps},\infty[ \right) = \dinf(A) = 0.
		\]
		Moreover, the properties of the upper density implies, for all $\delta>0$, that
		\[
		\Phi^*_{((0,0),(0,1))}(\delta) = \dsup\left( A \ \cap \ ]\tfrac{1}{\delta},\infty[ \right) = \dsup(A) = 1.
		\]
		Since $((0,0),(0,1))$ is also a D2 $\eps$-pair for all $0<\eps<1$, and since $d(f^j((0,0)),f^j((0,1))) \leq 1$ for every integer $j \in \NN$, it follows from Lemma~\ref{Lem:MLYC<->DC2} that $((0,0),(0,1))$ is a MLY $\eps$-pair for all $0<\eps<1$. Using now statements (a)~and~(b) of Theorem~\ref{The:pairs} we have that $(\Fc_{\infty}(X),\fuz{f})$ is U-DC1 and U-MLYC, and using statements (a), (c) and (d) of Theorem~\ref{The:pairs} it follows that $(\Fc_{0}(X),\fuz{f})$, $(\Fc_{S}(X),\fuz{f})$ and $(\Fc_{E}(X),\fuz{f})$ are DC1 and MLYC. We have checked properties (3) and (4).
		
		To check (1) and (2) we will use the projections $P_1:X\longrightarrow\NN_0$ and $P_2:X\longrightarrow[0,1]$ on the first and second coordinates, defined as
		\begin{equation}\label{eq:P_1.P_2}
			P_1((n_1,n_2)):=n_1 \quad \text{ and } \quad P_2((n_1,n_2)):=n_2 \quad \text{ for each point } (n_1,n_2) \in X.
		\end{equation}
		Note that $P_1(f^j(x))=P_1(x)+j$ and that $P_2(f^j(x))=P_2(x)$ for all $x \in X$ and $j \in \NN_0$.	In particular, if given $K \in \Kc(X)$ and $j \in \NN_0$ we consider the set $P_1(K)+j = \{ n+j \ ; \ n \in P_1(x) \text{ for some } x \in K \}$, the equality $P_1(f^j(x))=P_1(x)+j$ implies that
		\begin{equation}\label{eq:P_1(K)}
			P_1(\com{f}^j(K)) = P_1(K)+j \quad \text{ for all } K \in \Kc(X) \text{ and } j \in \NN_0.
		\end{equation}
		In addition, for every pair of distinct points $x \neq y$ in $X$ and each $j \in \NN_0$ we have that
		\begin{equation}\label{eq:|P_1-P_1|}
			d(f^j(x),f^j(y)) =
			\begin{cases}
				|P_1(x)-P_1(y)| & \text{ if } P_1(x) \neq P_1(y), \\[2pt]
				1 & \text{ if } P_1(x) = P_1(y) \text{ and } P_1(x)+j \in \NN\setminus A, \\[2pt]
				\tfrac{1}{P_1(x)+j} & \text{ if } P_1(x) = P_1(y) \text{ and } P_1(x)+j \in A,
			\end{cases}
		\end{equation}
		where we used that $|P_1(x)-P_1(y)| = |(P_1(x)+j)-(P_1(y)+j)|$. We are ready to check (1) and (2):\\[-17pt]
		\begin{enumerate}[(1)]
			\item We claim that, if $(x,y) \in X \times X$ is a LY or D3 pair for $f$ and $d$, then $P_1(x)=P_1(y)$. In fact, if we had that $P_1(x)\neq P_1(y)$, using \eqref{eq:|P_1-P_1|} we would have that
			\[
			d(x,y) = |P_1(x)-P_2(y)| = d(f^j(x),f^j(y)) \quad \text{ for all } j \in \NN.
			\]
			This condition contradicts that $(x,y)$ is a LY or a D3 pair. As a consequence of the claim, the only possible LY and D3 pairs for $f$ and $d$ are those of the type $((n,0),(n,1)) \in X\times X$. It follows that every LY or D3 scrambled set for the map $f$ and the metric $d$ has at most two points.
			
			\item We claim that, if $(K,L) \in \Kc(X) \times \Kc(X)$ is a LY or D3 pair for $\com{f}$ and $d_H$, then $P_1(K)=P_1(L)$. In fact, if $P_1(K) \neq P_1(L)$, using \eqref{eq:P_1(K)} and \eqref{eq:|P_1-P_1|} together with the fact that $|P_1(x)-P_1(y)| \geq 1$ when $P_1(x) \neq P_1(y)$, it is not hard to check that
			\[
			d_H(K,L) = \max\left\{ |P_1(x)-P_1(y)| \ ; \ (x,y) \in K\times L \right\} = d_H(\com{f}^j(K),\com{f}^j(L)) \quad \text{ for all } j \in \NN.
			\]
			This condition contradicts that $(K,L)$ is a LY or a D3 pair. As a consequence of the claim, if a subset $\com{S} \subset \Kc(X)$ is LY or D3 scrambled for $\com{f}$ and $d_H$, and if we fix any $K \in \com{S}$, then the only possible elements of $\com{S}$ are those $L \in \Kc(X)$ fulfilling that $P_1(K)=P_1(L)$ and hence that
			\[
			L \subset \left\{ (n,m) \in X \ ; \ n \in P_1(K) \text{ and } m \in \{0,1\} \right\}.
			\]
			Since $K$ is a finite set, we have that $P_1(K)$ is finite and hence $\com{S}$ is also finite. Actually, one can even check that if $\ell \in \NN$ is the cardinal of $P_1(K)$ then the cardinal of $\com{S}$ is at most $3^{\ell}$. We deduce that every LY or D3 scrambled set for $\com{f}$ and $d_H$ is finite, as we had to check.\qedhere
		\end{enumerate}
	\end{proof}
\end{example}

Considering the dynamical system exhibited in Example~\ref{Exa_1:main} and the implications
\begin{align*}
	\text{DC1 $\Rightarrow$ \DC{1} $\Rightarrow$ DC2 $\Rightarrow$ \DC{2} $\Rightarrow$ DC3} \quad &\text{ and } \quad \text{\DC{2} $\Rightarrow$ LYC},\\[2pt]
	\text{U-DC1 $\Rightarrow$ U-\DC{1} $\Rightarrow$ U-DC2 $\Rightarrow$ U-\DC{2} $\Rightarrow$ U-DC3} \quad &\text{ and } \quad \text{U-\DC{2} $\Rightarrow$ U-LYC},
\end{align*}
we deduce that the converse of statements (g), (h), (i) and (j) in Theorem~\ref{The:scrambled} are false. In particular, with Example~\ref{Exa_1:main} we have solved Question~\ref{Ques:Li-Yorke.transfer} in the positive. Our next example shows that the boundedness assumption in statement (i) of Theorem~\ref{The:scrambled} can not be weakened:

\begin{example}\label{Exa_2:E_MLYC}
	\textit{There exists a continuous function $f:X\longrightarrow X$ on a metric space $(X,d)$ such that:}\\[-17.5pt]
	\begin{enumerate}[(1)]
		\item \textit{The systems $(X,f)$, $(\Kc(X),\com{f})$, $(\Fc_{\infty}(X),\fuz{f})$, $(\Fc_{0}(X),\fuz{f})$ and $(\Fc_{S}(X),\fuz{f})$ are U-MLYC.}
		
		\item \textit{The dynamical system $(\Fc_{E}(X),\fuz{f})$ admits no MLY pair.}
	\end{enumerate}
	\begin{proof}
		Let $A := \{ 2^{k^2} \ ; \ k \in \NN  \} \subset \NN$, consider $X := \NN_0\times[0,1]$, and let $d:X\times X\longrightarrow[0,+\infty[$ be defined for each pair $(n_1,n_2), (m_1,m_2) \in X$ as
		\[
		d( (n_1,n_2) , (m_1,m_2) ) := 
		\begin{cases}
			0 & \text{ if } (n_1,n_2)=(m_1,m_2), \\[2pt]
			|2^{n_1}-2^{m_1}| & \text{ if } n_1 \neq m_1, \\[2pt]
			\tfrac{1}{2^{n_1}} & \text{ if } n_1 = m_1 \in \NN_0 \setminus A \text{ and } n_2 \neq m_2, \\[2pt]
			n_1 & \text{ if } n_1 = m_1 \in A \text{ and } n_2\neq m_2.
		\end{cases}
		\]
		We leave to the reader checking that $(X,d)$ is a metric space. The topology generated by $d$ is discrete so that every map on $(X,d)$ is continuous and $\Kc(X)$ is formed by the finite subsets of $X$. Consider
		\[
		f((n_1,n_2)) := (n_1+1,n_2) \quad \text{ for each } (n_1,n_2) \in X.
		\]
		We claim that the uncountable set $S := \{0\}\times[0,1]$ is MLY~$1$-scrambled. Indeed, note that
		\[
		d(f^j((0,n_2)),f^j((0,m_2))) = d((j,n_2),(j,m_2)) =
		\begin{cases}
			\tfrac{1}{2^j} & \text{ if } j \in \NN\setminus A, \\[2pt]
			j & \text{ if } j \in A,
		\end{cases}
		\]
		for every pair of distinct points $(0,n_2) \neq (0,m_2)$ in $S$. It follows that
		\begin{align*}
			&\limsup_{n\to\infty} \frac{1}{n} \sum_{j=1}^n d(f^j((0,n_2)),f^j((0,m_2))) \geq \limsup_{A \ni n \to \infty} \frac{1}{n} \sum_{j=1}^n d((j,n_2),(j,m_2)) \\[5pt]
			&= \limsup_{A \ni n \to \infty} \left( \left( \sum_{j=1}^{n-1} \frac{d((j,n_2),(j,m_2))}{n} \right) + 1 \right) \geq 1.
		\end{align*}
		Moreover, if for each $n \in A$ we use the notation $[n]:=\sqrt{\log_2(n)}$, we also have that
		\begin{align*}
			0 &\leq \liminf_{n\to\infty} \frac{1}{n} \sum_{j=1}^n d(f^j((0,n_2)),f^j((0,m_2))) \leq \liminf_{A \ni n \to \infty} \frac{1}{n-1} \sum_{j=1}^{n-1} d((j,n_2),(j,m_2)) \\[5pt]
			&= \liminf_{A \ni n \to \infty} \frac{n}{n(n-1)} \left( \sum_{1 \leq j \leq n-1}^{j\in\NN\setminus A} \frac{1}{2^j} + \sum_{1\leq j\leq n-1}^{j\in A} j \right) \leq \liminf_{A \ni n \to \infty} \frac{n}{n-1} \left( \frac{1}{n} + \sum_{1\leq j\leq n-1}^{j\in A} \frac{j}{n} \right) \\[5pt]
			&= \liminf_{A \ni n \to \infty} \sum_{1\leq j\leq n-1}^{j\in A} \frac{j}{n} = \liminf_{A \ni n \to \infty} \sum_{1 \leq k < [n]} \frac{2^{k^2}}{2^{[n]^2}} = \liminf_{A \ni n \to \infty} \sum_{1 \leq k < [n]} \left(\frac{1}{2}\right)^{[n]^2-k^2} \\[5pt]
			&\leq \liminf_{A \ni n \to \infty} \left(\frac{1}{2}\right)^{[n]^2-([n]-1)^2+1} = \liminf_{A \ni n \to \infty} \left(\frac{1}{2}\right)^{2[n]} = \lim_{A \ni n \to \infty} \left(\frac{1}{2}\right)^{2[n]} = 0.
		\end{align*}
		We have obtained that $(X,f)$ is U-MLYC for $d$. Applying now statement (f) of Theorem~\ref{The:scrambled} we have that the system $(\Kc(X),\com{f})$ is U-MLYC for $d_H$, and applying statement (g) of Theorem~\ref{The:scrambled} we also obtain that the systems $(\Fc_{\infty}(X),\fuz{f})$, $(\Fc_{0}(X),\fuz{f})$ and $(\Fc_{S}(X),\fuz{f})$ are U-MLYC. We have checked (1).
		
		To check (2) we use the projections $P_1:X\longrightarrow\NN_0$ and $P_2:X\longrightarrow[0,1]$ defined as in \eqref{eq:P_1.P_2}. Since the equalities $P_1(f^j(x))=P_1(x)+j$ and $P_2(f^{j}(x))=P_2(x)$ hold for all $x \in X$ and all $j \in \NN_0$ as in~Example~\ref{Exa_1:main}, we can also use \eqref{eq:P_1(K)} here. In addition, since the map $f$ is injective we have that
		\begin{equation}\label{eq:1-to-1}
			u(x) = [\fuz{f}^j(u)](f^j(x)) \quad \text{ for all } u \in \Fc(X) \text{, } x \in X \text{ and } j \in \NN_0.
		\end{equation}
		A direct consequence of \eqref{eq:1-to-1} is that, given any $u \in \Fc(X)$ and any pair of integers $n,j \in \NN_0$, then
		\begin{equation}\label{eq:max=max.f^j}
			\max\{ u(x) \ ; \ x \in P_1^{-1}(\{n\}) \} = \max\{ [\fuz{f}^j(u)](y) \ ; \ y \in P_1^{-1}(\{n+j\}) \}.
		\end{equation}
		Note that this value can be computed because the support of $u$, namely $u_0 \in \Kc(X)$, is compact and hence finite. Let now $u\neq v$ be two distinct sets in $\Fc(X)$ and let us prove that the pair $(u,v)$ can not be a MLY~pair for the map $\fuz{f}$ and the metric $d_{E}$. We distinguish two possible cases:\\[-17.5pt]
		\begin{enumerate}[--]
			\item \textbf{Case 1}: \textit{There exists some level $\alpha \in \II$ such that $P_1(u_{\alpha}) \neq P_1(v_{\alpha})$}. In this case, renaming $u$~as~$v$ and $v$~as~$u$ if necessary, there exists some $n_1 \in P_1(v_{\alpha})\setminus P_1(u_{\alpha})$. That is, there exists $z \in X$ fulfilling that $P_1(z)=n_1$ and that $v(z) \geq \alpha$, while the value $\beta := \max\{ u(x) \ ; \ x \in P_1^{-1}(\{n_1\}) \}$ fulfills that $0 \leq \beta < \alpha$. Given any $j \in \NN$ and any value $0 \leq \delta < \alpha-\beta$ we are going to check that $\eend(\fuz{f}^j(v)) \not\subset \eend(\fuz{f}^j(u)) + \delta$. Actually, since $(z,\alpha) \in \eend(v)$ we have that the point $(f^j(z),\alpha)$ belongs to $\eend(\fuz{f}^j(v))$, and by \eqref{eq:max=max.f^j} we have that $\beta = \max\{ [\fuz{f}^j(u)](y) \ ; \ y \in P_1^{-1}(\{n_1+j\}) \}$. Since
			\[
			\inf_{(y,\gamma) \in \eend(\fuz{f}^j(u))} \com{d}( (f^j(z),\alpha) , (y,\gamma) ) = \inf_{(y,\gamma) \in \eend(\fuz{f}^j(u))} \max\{ d(f^j(z),y) , |\alpha-\gamma| \},
			\]
			and since for each $(y,\gamma) \in \eend(\fuz{f}^j(u))$ we have that
			\begin{align*}
				\max\{ d(f^j(z),y) , |\alpha-\gamma| \} &\geq \left\{
				\begin{array}{ll}
					\alpha & \text{ if } \gamma=0 \text{ and } y \in X, \hspace{1cm} \\[5pt]
					d(f^j(z),y) & \text{ if } 0 < \gamma \leq 1 \text{ and } P_1(y) \neq n_1+j, \\[5pt]
					|\alpha-\beta| & \text{ if } 0 < \gamma \leq \beta \text{ and } P_1(y) = n_1+j,
				\end{array}
				\right\} \\[7.5pt]
				&\geq \left\{
				\begin{array}{ll}
					\alpha & \text{ if } \gamma=0 \text{ and } y \in X, \hspace{1cm} \\[5pt]
					|2^{n_1+j}-2^{P_1(y)}| & \text{ if } 0 < \gamma \leq 1 \text{ and } P_1(y) \neq n_1+j, \\[5pt]
					\alpha-\beta & \text{ if } 0 < \gamma \leq \beta \text{ and } P_1(y) = n_1+j,
				\end{array}
				\right\} > \delta,
			\end{align*}
			the arbitrariness of ``$j$'' and ``$\delta$'' together with statement (a) of Proposition~\ref{Pro:Hausdorff} show that
			\[
			d_{E}(\fuz{f}^j(u),\fuz{f}^j(v)) \geq \alpha-\beta > 0 \quad \text{ for all } j \in \NN.
			\]
			It trivially follows that $(u,v)$ can not be a MLY pair, not even a LY pair, for $\fuz{f}$ and $d_{E}$.
			
			\item \textbf{Case 2}: \textit{We have that $P_1(u_{\alpha}) = P_1(v_{\alpha})$ for all $\alpha \in \II$}. In this case, given any $j \in \NN$ we claim that
			\begin{equation}\label{eq:d_E.bounded}
				\text{if } P_1(\com{f}^j(u_0)) \cap A = \varnothing, \quad \text{ then }  \quad d_{E}(\fuz{f}^j(u),\fuz{f}^j(v)) \leq \left(\frac{1}{2}\right)^{\min(P_1(\com{f}^j(u_0)))}.
			\end{equation}
			Actually, assume that $P_1(\com{f}^j(u_0)) \cap A = \varnothing$ and let $(x,\alpha)$ be any arbitrary but fixed point of the endograph $\eend(\fuz{f}^j(u))$ fulfilling that $\alpha>0$. Since $P_1(u_{\alpha})=P_1(v_{\alpha})$, using \eqref{eq:P_1(K)} it follows that
			\[
			P_1([\fuz{f}^j(u)]_{\alpha}) = P_1(\com{f}^j(u_\alpha)) = P_1(u_\alpha)+j = P_1(v_\alpha)+j = P_1(\com{f}^j(v_{\alpha})) = P_1([\fuz{f}^j(v)]_{\alpha}).
			\]
			In particular, there exists some $y \in X$ with $P_1(x)=P_1(y)$ and such that $(y,\alpha) \in \eend(\fuz{f}^j(v))$. By assumption we have that $P_1(x) = P_1(y) \in P_1(\com{f}^j(u_0)) \subset \NN \setminus A$, so that
			\[
			\com{d}( (x,\alpha) , (y,\alpha) ) = \max\{ d(x,y) , |\alpha-\alpha| \} = d(x,y) = \frac{1}{2^{P_1(x)}}.
			\]
			This implies, by the arbitrariness of $(x,\alpha) \in \eend(\fuz{f}^j(u))$ with $\alpha>0$, that
			\[
			\eend(\fuz{f}^j(u)) \subset \eend(\fuz{f}^j(v)) + \left(\frac{1}{2}\right)^{\min(P_1(\com{f}^j(u_0)))}.
			\]
			Arguing symmetrically (i.e.\ replacing $u$ with $v$ and $v$ with $u$), and by statement (a) of Proposition~\ref{Pro:Hausdorff}, we obtain \eqref{eq:d_E.bounded}. Now, note that for each $n \in \NN$ one has the inequality
			\begin{equation}\label{eq:lim}
				\frac{1}{n} \sum_{j=1}^n d_{E}(\fuz{f}^j(u),\fuz{f}^j(v)) \leq \frac{1}{n} \left( \sum_{1\leq j\leq n}^{P_1(\com{f}^j(u_0))\cap A = \varnothing} \left(\frac{1}{2}\right)^{\min(P_1(\com{f}^j(u_0)))} + \sum_{1\leq j\leq n}^{P_1(\com{f}^j(u_0))\cap A \neq \varnothing} 1 \right),
			\end{equation}
			where we used \eqref{eq:d_E.bounded} and part (a) of Proposition~\ref{Pro:fuzzy.metrics}. Looking at the first part of the last expression in~\eqref{eq:lim}, and since $\min(P_1(\com{f}^j(u_0)) = \min(P_1(u_0)+j) \geq j$ for each $j \in \NN$ by \eqref{eq:P_1(K)}, we have that
			\begin{equation}\label{eq:lim.1}
				0 \leq \lim_{n\to\infty} \frac{1}{n} \sum_{1\leq j\leq n}^{P_1(\com{f}^j(u_0))\cap A = \varnothing} \left(\frac{1}{2}\right)^{\min(P_1(\com{f}^j(u_0)))} \leq \lim_{n\to\infty} \frac{1}{n} \sum_{1\leq j\leq n} \frac{1}{2^j} \leq \lim_{n\to\infty} \frac{1}{n} = 0.
			\end{equation}
			For the second part of the last expression in \eqref{eq:lim} note that, since $u_0 \in \Kc(X)$ is a finite set, there exists some $N \in \NN$ such that $P_1(u_0) \subset [0,N]$. Thus, using \eqref{eq:P_1(K)} again we have that
			\[
			P_1(\com{f}^j(u_0)) = P_1(u_0)+j \subset [0,N]+j = [j,j+N] \quad \text{ for each } j \in \NN.
			\]
			Fix some $k \in \NN$ such that $(2^{k^2}+1) + N < 2^{(k+1)^2}$ and, for each $n>2^{k^2}$, let us denote by $\ell_n \in \NN$ the unique integer fulfilling that $n \in [2^{(k+\ell_n-1)^2}+1,2^{(k+\ell_n)^2}]$. We claim that
			\begin{equation}\label{eq:j>=2^k^2}
				\sum_{2^{k^2}< j\leq n}^{P_1(\com{f}^j(u_0))\cap A \neq \varnothing} 1 \ \leq \sum_{2^{k^2}< j\leq n}^{[j,j+N] \cap A \neq \varnothing} 1 \ = \ \#\left\{ 2^{k^2} < j \leq n \ ; \ [j,j+N] \cap A \neq \varnothing \right\} \ = \ \ell_n \cdot (N+1).
			\end{equation}
			Indeed, note that given $n \in [2^{(k+\ell_n-1)^2}+1,2^{(k+\ell_n)^2}]$ and $2^{k^2}<j\leq n$ one has that
			\[
			[j,j+N] \cap A \neq \varnothing \quad \text{ if and only if } \quad 2^{(k+m)^2}-N \leq j \leq 2^{(k+m)^2} \text{ for some } 1 \leq m \leq \ell_n.
			\]
			Thus, using \eqref{eq:j>=2^k^2} we obtain that
			\begin{align*}
				0 &\leq \lim_{n\to\infty} \frac{1}{n} \sum_{1\leq j\leq n}^{P_1(\com{f}^j(u_0))\cap A \neq \varnothing} 1 \ \leq \lim_{n\to\infty} \frac{1}{n} \left( 2^{k^2} + \sum_{2^{k^2}< j\leq n}^{[j,j+N] \cap A \neq \varnothing} 1 \right) = \lim_{n\to\infty} \frac{1}{n }\left( 2^{k^2} + \ell_n \cdot (N+1) \right) \\[5pt]
				&\leq \lim_{n\to\infty} \frac{\ell_n \cdot (N+1) + 2^{k^2}}{2^{(k+\ell_n-1)^2}} = \lim_{\ell\to\infty} \frac{\ell \cdot (N+1) + 2^{k^2}}{2^{(k+\ell-1)^2}} \leq \lim_{\ell\to\infty} \frac{\ell \cdot (N+1) + 2^{k^2}}{2^{\ell^2}} = 0.
			\end{align*}
			This computation together with \eqref{eq:lim.1} show that, taking limits in \eqref{eq:lim}, we have
			\[
			\lim_{n\to\infty} \frac{1}{n} \sum_{j=1}^n d_{E}(\fuz{f}^j(u),\fuz{f}^j(v)) = 0.
			\]
			We deduce that $(u,v)$ can not be a MLY pair for $\fuz{f}$ and $d_{E}$.\qedhere
		\end{enumerate}
	\end{proof}
\end{example}

The boundedness assumption in statement (j) of Theorem~\ref{The:scrambled} can not be weakened:

\begin{example}\label{Exa_3:E_DC3}
	\textit{There exists a continuous function $f:X\longrightarrow X$ on a metric space $(X,d)$ such that:}\\[-17.5pt]
	\begin{enumerate}[(1)]
		\item \textit{The systems $(X,f)$, $(\Kc(X),\com{f})$, $(\Fc_{\infty}(X),\fuz{f})$, $(\Fc_{0}(X),\fuz{f})$ and $(\Fc_{S}(X),\fuz{f})$ are U-DC3.}
		
		\item \textit{The dynamical system $(\Fc_{E}(X),\fuz{f})$ admits no D3 pair.}
	\end{enumerate}
	\begin{proof}
		Let $A \subset \NN$ with $\dinf(A) < \dsup(A)$. As in Example~\ref{Exa_2:E_MLYC}, consider $X := \NN_0\times[0,1]$, but this time let $d:X\times X\longrightarrow[0,+\infty[$ be defined for each pair $(n_1,n_2), (m_1,m_2) \in X$ as
		\[
		d( (n_1,n_2) , (m_1,m_2) ) := 
		\begin{cases}
			0 & \text{ if } (n_1,n_2)=(m_1,m_2), \\[2pt]
			|n_1-m_1| & \text{ if } n_1 \neq m_1, \\[2pt]
			2 & \text{ if } n_1 = m_1 \in \NN_0 \setminus A \text{ and } n_2 \neq m_2, \\[2pt]
			1 & \text{ if } n_1 = m_1 \in A \text{ and } n_2\neq m_2.
		\end{cases}
		\]
		We leave to the reader checking that $(X,d)$ is a metric space. The topology generated by $d$ is discrete so that every map on $(X,d)$ is continuous and $\Kc(X)$ is formed by the finite subsets of $X$. Consider
		\[
		f((n_1,n_2)) := (n_1+1,n_2) \quad \text{ for each } (n_1,n_2) \in X.
		\]
		We claim that the uncountable set $S := \{0\}\times[0,1]$ is D3~$(1,2)$-scrambled for $f$ and $d$. Actually, given any pair of distinct points $(0,n_2) \neq (0,m_2)$ in $S$, applying $j$-times the map $f$ we have that
		\[
		d(f^j((0,n_2)),f^j((0,m_2))) = d((j,n_2),(j,m_2)) =
		\begin{cases}
			2 & \text{ if } j \in \NN\setminus A, \\[2pt]
			1 & \text{ if } j \in A.
		\end{cases}
		\]
		Thus, $((0,n_2),(0,m_2))$ is a D3 $(1,2)$-pair for $f$ and $d$ because given any $1 < \delta < 2$ we have that
		\[
		\Phi_{((0,n_2),(0,m_2))}(\delta) = \dinf(A) < \dsup(A) = \Phi^*_{(0,n_2),(0,m_2)}.
		\]
		We have obtained that $(X,f)$ is U-DC3 for $d$. Applying now statement (f) of Theorem~\ref{The:scrambled} we have that the system $(\Kc(X),\com{f})$ is U-DC3 for $d_H$, and applying statement (g) of Theorem~\ref{The:scrambled} we also obtain that the systems $(\Fc_{\infty}(X),\fuz{f})$, $(\Fc_{0}(X),\fuz{f})$ and $(\Fc_{S}(X),\fuz{f})$ are U-DC3. We have checked (1).
		
		To check (2) we will prove that $(\Fc_{E}(X),\fuz{f})$ is an isometry. We start noticing that, as in Example~\ref{Exa_2:E_MLYC}, since the map $f$ is injective we can also use \eqref{eq:1-to-1} here. Let $u \neq v$ be two distinct normal fuzzy sets in $\Fc(X)$. Since their supports $u_0,v_0 \in \Kc(X)$ are compact and hence finite, we can compute the value $\beta := \max\left\{ |u(x)-v(x)| \ ; \ x \in X \right\}$, which is a number fulfilling that $0<\beta<1$. Renaming $u$~as~$v$ and $v$~as~$u$ if necessary, there exists some $z \in X$ such that $u(z)-v(z)=\beta$, and by \eqref{eq:1-to-1} we have that
		\[
		\beta = \max\left\{ \left| [\fuz{f}^j(u)](f^j(x))-[\fuz{f}^j(v)](f^j(x)) \right| \ ; \ x \in X \right\} \quad \text{ for all } j \in \NN_0.
		\]
		To complete the example it is enough to check the following two inequalities:\\[-17.5pt]
		\begin{enumerate}[--]
			\item \textbf{Inequality 1}: \textit{We have that $d_{E}(\fuz{f}^j(u),\fuz{f}^j(v)) \geq \beta$ for $j \in \{0,1\}$}. Fix $j \in \{0,1\}$ and consider any $0 \leq \delta < \beta$. By \eqref{eq:1-to-1} we have that $(f^j(z),u(z)) \in \eend(\fuz{f}^j(u))$ and that $[\fuz{f}^j(v)](f^j(z))=v(z)$. Thus,
			\[
			\inf_{(y,\gamma) \in \eend(\fuz{f}^j(v))} \com{d}( (f^j(z),u(z)) , (y,\gamma) ) = \inf_{(y,\gamma) \in \eend(\fuz{f}^j(v))} \max\{ d(f^j(z),y) , |u(z)-\gamma| \},
			\]
			and for each $(y,\gamma) \in \eend(\fuz{f}^j(v))$ we have that
			\begin{align*}
				\max\{ d(f^j(z),y) , |u(z)-\gamma| \} &\geq \left\{
				\begin{array}{ll}
					d(f^j(z),y) \geq 1 & \text{ if } f^j(z) \neq y, \\[5pt]
					|u(z)-v(z)| = \beta & \text{ if } f^j(z) = y,
				\end{array}
				\right\} > \delta.
			\end{align*}
			The arbitrariness of ``$\delta$'' together with statement (a) of Proposition~\ref{Pro:Hausdorff} implies \textbf{Inequality 1}.
			
			\item \textbf{Inequality 2}: \textit{We have that $d_{E}(\fuz{f}^j(u),\fuz{f}^j(v)) \leq \beta$ for $j \in \{0,1\}$}. Fix $j \in \{0,1\}$, let $(y,\alpha)$ be a point in $\eend(\fuz{f}^j(u))$, and note that $(y,[\fuz{f}^j(v)](y)) \in \eend(\fuz{f}^j(v))$. There are two possibilities:\\[-17.5pt]
			\begin{enumerate}[--]
				\item \textbf{Case 1}: \textit{We have that $[\fuz{f}^j(v)](y) \geq \alpha$}. In this case $(y,\alpha) \in \eend(\fuz{f}^j(v)) \subset \eend(\fuz{f}^j(v)) + \beta$.
								
				\item \textbf{Case 2}: \textit{We have that $[\fuz{f}^j(v)](y) < \alpha$}. In this case, since $[\fuz{f}^j(u)](y) \geq \alpha > 0$ and $f$ is injective, there exists a unique point $y_j \in X$ fulfilling that $f^j(y_j)=y$. Thus, using \eqref{eq:1-to-1} we have that
				\begin{align*}
					|\alpha - [\fuz{f}^j(v)](y)| &= \alpha - [\fuz{f}^j(v)](y) = \alpha-[\fuz{f}^j(v)](f^j(y_j)) \leq [\fuz{f}^j(u)](f^j(y_j)) - [\fuz{f}^j(v)](f^j(y_j)) \\[5pt]
					&= u(y_j)-v(y_j) \leq \max\left\{ |u(x)-v(x)| \ ; \ x \in X \right\} = \beta,
				\end{align*}
				so that $\com{d}( (y,\alpha) , (y,[\fuz{f}^j(v)](y)) ) = \max\{ d(y,y) , |\alpha-[\fuz{f}^j(v)](y)| \} \leq \beta$ and $(y,\alpha) \in \eend(\fuz{f}^j(v))+\beta$.
			\end{enumerate}
			Since $(y,\alpha) \in \eend(\fuz{f}^j(u))$ was arbitrary, it follows that
			\[
			\eend(\fuz{f}^j(u)) \subset \eend(\fuz{f}^j(v)) + \beta.
			\]
			Arguing symmetrically and using statement (a) of Proposition~\ref{Pro:Hausdorff} we obtain \textbf{Inequality 2}.
		\end{enumerate}
		We have checked that $(\Fc_{E}(X),\fuz{f})$ is an isometry. Note that an isometry admits no D3 pairs.
	\end{proof}
\end{example}

\section{Interlude: proximality and sensitivity}\label{Sec_4:sensitivity}

In Section~\ref{Sec_3:Li-Yorke} we have showed, via Example~\ref{Exa_1:main}, that Question~\ref{Ques:Li-Yorke.transfer} admits a positive answer. However, a natural question arises: under which conditions can one guarantee that a Li-Yorke chaotic-type property transfers from $(\Fc(X),\fuz{f})$ to $(\Kc(X),\com{f})$? In order to obtain a general result in this direction, and following \cite{JiangLi2025_JMAA_chaos}, we will decompose Li-Yorke chaos into two fundamental ingredients: proximality and sensitivity. In this paper, this section serves as an \textit{interlude} in which we establish some auxiliary results concerning proximality and sensitivity. These results will later be used to provide a partial negative answer to Question~\ref{Ques:Li-Yorke.transfer}, under certain natural completeness assumptions (see Section~\ref{Sec_5:Cantor}).

This section is organized as follows. In Subsection~\ref{SubSec_4.1:proximality} we discuss proximality, showing that it transfers from $(\Fc(X),\fuz{f})$ to  $(\Kc(X),\com{f})$, but that strong assumptions on $(X,f)$ or on $(\Kc(X),\com{f})$ are needed to ensure having a dense proximal relation in $(\Fc(X),\fuz{f})$; see Theorem~\ref{The:proximality}. In Subsection~\ref{SubSec_4.2:sensitivity} we focus on sensitivity, which was originally considered for $(\Fc(X),\fuz{f})$ in~\cite{JardonSan2021_IJFS_sensitivity}; see Theorem~\ref{The:sensitivity}.

Throughout this section we will use of Lemmas~\ref{Lem:key} and~\ref{Lem:key2} in a crucial way. In addition, given any dynamical system $(X,f)$ acting on a metric space $(X,d)$ and any positive integer $N \in \NN$, we will denote by $f_{(N)}:X^N\longrightarrow X^N$ the {\em $N$-fold direct product} of $f$ with itself. That is, the pair $(X^N,f_{(N)})$ will be the dynamical system
\[
f_{(N)} := \underbrace{f\times\cdots\times f}_{N} : \underbrace{X\times\cdots\times X}_{N} \longrightarrow \underbrace{X\times\cdots\times X}_{N},
\]
where $X^N:=X\times\cdots\times X$ is the {\em $N$-fold direct product} of $X$, and where the respective {\em $N$-fold direct map} is defined as $f_{(N)}\left((x_1,...,x_N)\right) := (f(x_1),...,f(x_N))$ for each $N$-tuple $(x_1,...,x_N) \in X^N$. In addition, the space $X^N$ will be endowed with the metric $d_{(N)}:X^N\times X^N\longrightarrow[0,\infty[$ defined as
\[
d_{(N)}\left( (x_1,...,x_N) , (y_1,...,y_N) \right) := \max_{1\leq l\leq N} d(x_l,y_l),
\]
for each pair of $N$-tuples $(x_1,...,x_N) , (y_1,...,y_N) \in X^N$.

\subsection{Proximality}\label{SubSec_4.1:proximality}

Given a continuous map $f:X\longrightarrow X$ acting on a metric space $(X,d)$:
\begin{enumerate}[--]
	\item A pair $(x,y) \in X\times X$ is said to be {\em proximal}, for the map~$f$ and the metric~$d$, if
	\[
	\liminf_{n\to\infty} d(f^n(x),f^n(y))=0.
	\]
	
	\item We will denote the {\em proximal relation}, for the map~$f$ and the metric~$d$, by
	\[
	\Prox(f,d) := \left\{ (x,y) \in X\times X \ ; \ (x,y) \text{ is proximal for $f$ and $d$} \right\}.
	\]
	
	\item We say that $(X,f)$ \textit{has a dense proximal relation}, for $d$, if $\Prox(f,d)$ is a dense set in $(X^2,d_{(2)})$.
\end{enumerate}
As far as we know, the property of having a dense proximal relation has not been studied in the context of dynamics on hyperspaces, so that the next result seems to be entirely new in the literature:

\begin{theorem}\label{The:proximality}
	Let $f:X\longrightarrow X$ be a continuous map acting on a metric space $(X,d)$. Hence:
	\begin{enumerate}[{\em(a)}]
		\item If the system $(X^N,f_{(N)})$ has a dense proximal relation for the metric $d_{(N)}$ for every $N \in \NN$, then so does $(\Kc(X)^N,\com{f}_{(N)})$ for the metric $(d_H)_{(N)}$ for every $N \in \NN$.
		
		\item If the system $(\Kc(X)^N,\com{f}_{(N)})$ has a dense proximal relation for the metric $(d_H)_{(N)}$ for every $N \in \NN$, then so do the systems $(\Fc_{\infty}(X),\fuz{f})$, $(\Fc_{0}(X),\fuz{f})$, $(\Fc_{S}(X),\fuz{f})$ and $(\Fc_{E}(X),\fuz{f})$.
		
		\item If any of the dynamical systems $(\Fc_{\infty}(X),\fuz{f})$, $(\Fc_{0}(X),\fuz{f})$ or $(\Fc_{S}(X),\fuz{f})$ has a dense proximal relation, then so does the system $(\Kc(X),\com{f})$ for the metric $d_H$.
	\end{enumerate}
\end{theorem}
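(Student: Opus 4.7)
I would address the three parts in increasing order of technical difficulty. Part (c) is essentially a transference through the isometric embedding $\iota_2 : K \mapsto \chi_K$ of $(\Kc(X),d_H)$ into $\Fc_\rho(X)$ for $\rho \in \{d_\infty,d_0,d_S\}$. Given $K,L \in \Kc(X)$ and $\eps>0$, I would apply dense proximality in $(\Fc_\rho(X),\fuz{f})$ to $(\chi_K,\chi_L)$, producing $u,v \in \Fc(X)$ with $\rho(\chi_K,u),\rho(\chi_L,v)<\eps$ and $\liminf_n \rho(\fuz{f}^n(u),\fuz{f}^n(v))=0$. Setting $K':=u_0$ and $L':=v_0$, the closeness $d_H(K,K')<\eps$ follows from Proposition~\ref{Pro:fuzzy.metrics}(c) for $\rho \in \{d_\infty,d_0\}$ and from Proposition~\ref{Pro:fuzzy.metrics}(e) for $\rho=d_S$; similarly for $(L,L')$. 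Combining Proposition~\ref{Pro:fuz{f}}(a) with Proposition~\ref{Pro:fuzzy.metrics}(a),(e) gives
\[
d_H(\com{f}^n(K'),\com{f}^n(L')) = d_H([\fuz{f}^n(u)]_0,[\fuz{f}^n(v)]_0) \leq \rho(\fuz{f}^n(u),\fuz{f}^n(v)),
\]
so the $\liminf$ transfers and $(K',L')$ is proximal in $(\Kc(X),\com{f})$.

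For part (a), given $(K_1,\ldots,K_N,L_1,\ldots,L_N) \in \Kc(X)^{2N}$ and $\eps>0$, I would pick a large $m \in \NN$ and choose $m$-point subsets $\{x_{i,1},\ldots,x_{i,m}\} \subset K_i$ and $\{y_{i,1},\ldots,y_{i,m}\} \subset L_i$ (padding by repetitions if needed) approximating $K_i,L_i$ within $\tfrac{\eps}{2}$ in $d_H$. Apply the dense proximal hypothesis for $(X^{Nm},f_{(Nm)})$ to the pair $((x_{i,j})_{i,j},(y_{i,j})_{i,j})$, obtaining $((x'_{i,j}),(y'_{i,j}))$ within $\tfrac{\eps}{2}$ in $d_{(Nm)}$ and with $\liminf_n \max_{i,j} d(f^n(x'_{i,j}),f^n(y'_{i,j}))=0$. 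Setting $K'_i := \{x'_{i,1},\ldots,x'_{i,m}\}$ and $L'_i := \{y'_{i,1},\ldots,y'_{i,m}\}$, the triangle inequality gives $d_H(K_i,K'_i)<\eps$, and the estimate $d_H(\com{f}^n(K'_i),\com{f}^n(L'_i)) \leq \max_j d(f^n(x'_{i,j}),f^n(y'_{i,j}))$ delivers the joint proximality in $(\Kc(X)^N,\com{f}_{(N)})$.

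For part (b), given $u,v \in \Fc(X)$ and $\eps>0$, I would apply Lemma~\ref{Lem:eps.pisos} to both $u$ and $v$ and pass to a common refinement $0=\alpha_0<\alpha_1<\cdots<\alpha_N=1$, so that the step functions $u^* := \max_l \alpha_l \chi_{u_{\alpha_l}}$ and $v^* := \max_l \alpha_l \chi_{v_{\alpha_l}}$ satisfy $d_\infty(u,u^*),d_\infty(v,v^*)<\eps$. The nested tuples $(u_{\alpha_l})_l,(v_{\alpha_l})_l \in \Kc(X)^N$ are then fed into the dense proximal hypothesis for $(\Kc(X)^N,\com{f}_{(N)})$, producing perturbations $(K'_l)_l,(L'_l)_l$. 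To restore nestedness I would replace $K'_l$ by $\tilde K'_l := \bigcup_{m \geq l} K'_m$ (and similarly $\tilde L'_l$); iterating Proposition~\ref{Pro:Hausdorff}(b) shows this preserves both the Hausdorff approximation of $(u_{\alpha_l})_l$ and the joint proximality with $(\tilde L'_l)_l$. Setting $u' := \max_l \alpha_l \chi_{\tilde K'_l}$ and $v' := \max_l \alpha_l \chi_{\tilde L'_l}$, one obtains $d_\infty(\fuz{f}^n(u'),\fuz{f}^n(v')) = \max_l d_H(\com{f}^n(\tilde K'_l),\com{f}^n(\tilde L'_l))$, giving dense proximality in $\Fc_\infty(X)$. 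The inequalities $d_E \leq d_S \leq d_0 \leq d_\infty$ from Proposition~\ref{Pro:fuzzy.metrics}(a) transfer this density to $\Fc_0(X)$, $\Fc_S(X)$, and $\Fc_E(X)$ at once.

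The main obstacle I anticipate is the nestedness repair in part (b): the dense proximality hypothesis in $\Kc(X)^N$ offers no control over the inclusion structure of the output tuples $(K'_l)_l$, yet a valid fuzzy set demands nested levels, so one must carefully verify that the union trick $\tilde K'_l := \bigcup_{m \geq l} K'_m$ preserves both the Hausdorff approximation of $(u_{\alpha_l})_l$ and the joint proximality with $(\tilde L'_l)_l$. A secondary technical point is the coordination of the individual Lemma~\ref{Lem:eps.pisos} partitions for $u$ and $v$ via a common refinement, which is needed so that the same index set $(\alpha_l)$ governs both step functions and allows identification of the $\fuz{f}$-dynamics with the diagonal $\com{f}_{(N)}$-dynamics on the level tuples.
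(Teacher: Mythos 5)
Your proposal is correct and follows essentially the same route as the paper in all three parts: finite approximating tuples fed into the product hypothesis for (a), level-set tuples from Lemma~\ref{Lem:eps.pisos} plus piecewise-constant fuzzy sets for (b), and the support map $u\mapsto u_0$ together with $d_H(u_0,v_0)\leq\rho(u,v)$ for (c). The only divergence is that your ``nestedness repair'' in (b) is unnecessary: $\max_l\alpha_l\chi_{K'_l}$ is already a valid normal fuzzy set whose $\alpha_l$-level is automatically $\bigcup_{m\geq l}K'_m$, which is exactly the bound the paper extracts via Proposition~\ref{Pro:Hausdorff}(b), so your explicit union step is harmless but redundant.
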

\begin{proof}
	(a): We will use the fact: \textit{for any $K \in \Kc(X)$ and $\eps>0$ there exists some $M_0 \in \NN$ such that for each $M \geq M_0$ one can find (possibly repeated) points $x_1,...,x_M \in K$ for which every finite set of the type $K' := \{ x_1', x_2', ..., x_M' \}$ with $d(x_l,x_l')<\frac{\eps}{2}$ for all $1\leq l\leq M$ fulfills that $d_H(K,K')<\eps$}. This fact is probably folklore, but the reader can easily check it by using the compactness condition of $K$ together with statement (a) of Proposition~\ref{Pro:Hausdorff}. Fix now $N \in \NN$, let $(K_1,K_2,...,K_N) , (L_1,L_2,...,L_N) \in \Kc(X)^N$, and let $\eps>0$. To conclude we will find $(K_1',K_2',...,K_N')$ and $(L_1',L_2',...,L_N')$ in $\Kc(X)^N$ fulfilling that
	\begin{equation}\label{eq:proximal.eps-near}
		\max_{1 \leq j \leq N}\{  d_H(K_j,K_j') , d_H(L_j,L_j') \} < \eps,
	\end{equation}
	and that
	\begin{equation}\label{eq:proximal.N-tuples}
		\left( (K_1',K_2',...,K_N') , (L_1',L_2',...,L_N') \right) \in \Prox(\com{f}_{(N)},(d_H)_{(N)}).
	\end{equation}
	Using the previously stated fact, on every $K_j$ and $L_j$, we can find a positive integer $M \in \NN$ big enough to fulfill that: for each $1 \leq j \leq N$ there exist (possibly repeated) points $x_{j,1},x_{j,2},...,x_{j,M} \in K_j$ but also $y_{j,1},y_{j,2},...,y_{j,M} \in L_j$ for which every pair of finite sets of the type
	\begin{equation}\label{eq:K_j'.L_j'}
		K_j' := \{ x_{j,1}',x_{j,2}',...,x_{j,M}' \} \quad \text{ and } \quad L_j' := \{ y_{j,1}',y_{j,2}',...,y_{j,M}' \},
	\end{equation}
	with $\max\{ d(x_{j,l},x_{j,l}') , d(y_{j,l},y_{j,l}') \} < \tfrac{\eps}{2}$ for $1 \leq l \leq M$, fulfill $\max\{ d_H(K_j,K_j') , d_H(L_j,L_j') \} < \eps$. Thus, for the $NM$-tuples
	\begin{align*}
		&x := (x_{1,1},x_{1,2},...,x_{1,M},x_{2,1},x_{2,2},...,x_{2,M},...,x_{j,1},x_{j,2},...,x_{j,M},...,x_{N,1},x_{N,2},...,x_{N,M}) \in X^{NM}, \\
		&y := (y_{1,1},y_{1,2},...,y_{1,M},y_{2,1},y_{2,2},...,y_{2,M},...,y_{j,1},y_{j,2},...,y_{j,M},...,y_{N,1},y_{N,2},...,y_{N,M}) \in X^{NM},
	\end{align*}
	using that the system $(X^{NM},f_{(NM)})$ has a dense proximal relation for $d_{(NM)}$, we can find
	\begin{align*}
		&x' = (x_{1,1}',x_{1,2}',...,x_{1,M}',x_{2,1}',x_{2,2}',...,x_{2,M}',...,x_{j,1}',x_{j,2}',...,x_{j,M}',...,x_{N,1}',x_{N,2}',...,x_{N,M}') \in X^{NM}, \\
		&y' = (y_{1,1}',y_{1,2}',...,y_{1,M}',y_{2,1}',y_{2,2}',...,y_{2,M}',...,y_{j,1}',y_{j,2}',...,y_{j,M}',...,y_{N,1}',y_{N,2}',...,y_{N,M}') \in X^{NM},
	\end{align*}
	fulfilling that $d_{(NM)}(x,x')<\tfrac{\eps}{2}$, that $d_{(NM)}(y,y')<\tfrac{\eps}{2}$, and that $(x',y') \in \Prox(f_{(NM)},d_{(NM)})$. If now we choose the sets $K_j'$ and $L_j'$ as described in \eqref{eq:K_j'.L_j'}, it is routine to check that \eqref{eq:proximal.eps-near} and \eqref{eq:proximal.N-tuples} hold.
	
	(b): Since $d_{E}(u,v) \leq d_{S}(u,v) \leq d_{0}(u,v) \leq d_{\infty}(u,v)$ for every pair $u,v \in \Fc(X)$, it is enough proving that $(\Fc_{\infty}(X),\fuz{f})$ has a dense proximal relation. Let $u,v \in \Fc(X)$, fix $\eps>0$, and apply Lemma~\ref{Lem:eps.pisos} to find values $0 = \alpha_0 < \alpha_1 < \alpha_2 < ... < \alpha_N = 1$ fulfilling that $\max\{ d_H(u_0,u_{\alpha_1}) , d_H(v_0,v_{\alpha_1}) \} < \tfrac{\eps}{2}$ and that $\max\{ d_H(u_{\alpha}, u_{\alpha_{l+1}}) , d_H(v_{\alpha}, v_{\alpha_{l+1}}) \} < \tfrac{\eps}{2}$ for all $\alpha \in \ ]\alpha_l, \alpha_{l+1}]$ and $0\leq l\leq N-1$. Thus, for the $N$-tuples of compact sets $(u_{\alpha_1},u_{\alpha_2},...,u_{\alpha_N}) , (v_{\alpha_1},v_{\alpha_2},...,v_{\alpha_N}) \in \Kc(X)^N$, using our assumption that the system $(\Kc(X)^N,\com{f}_{(N)})$ has a dense proximal relation for $(d_H)_{(N)}$ we can find
	\[
	(K_1,K_2,...,K_N) , (L_1,L_2,...,L_N) \in \Kc(X)^N,
	\]
	fulfilling that $\max_{1 \leq j \leq N}\{  d_H(u_{\alpha_j},K_j) , d_H(v_{\alpha_j},L_j) \} < \tfrac{\eps}{2}$, and that
	\[
	\left( (K_1,K_2,...,K_N) , (L_1,L_2,...,L_N) \right) \in \Prox(\com{f}_{(N)},(d_H)_{(N)}).
	\]
	Considering the fuzzy sets
	\[
	u' := \max_{1\leq l\leq N} \alpha_l \chi_{K_l} \quad \text{ and } \quad v' := \max_{1\leq l\leq N} \alpha_l \chi_{L_l},
	\]
	we can check that $\max\{ d_{\infty}(u,u') , d_{\infty}(v,v') \}<\eps$. Actually, by (b) of Proposition~\ref{Pro:Hausdorff} we have that
	\[
	d_H\left(u_{\alpha_l},u'_{\alpha_l}\right) = d_H\left( \bigcup_{j\geq l} u_{\alpha_j}, \bigcup_{j\geq l} K_j \right) \leq \max_{1\leq j\leq N} \left\{ d_H\left( u_{\alpha_j} , K_j \right) \right\} < \frac{\eps}{2} \quad \text{ for all } 1\leq l\leq N,
	\]
	so that given any $\alpha \in [0,1]$ we have the bound
	\begin{align*}
		d_H\left( u_{\alpha} , u'_{\alpha} \right) &= \left\{
		\begin{array}{ll}
			d_H\left( u_{\alpha} , u'_{\alpha_1} \right), & \text{ if } \alpha \in [0,\alpha_1] \\[5pt]
			d_H\left( u_{\alpha} , u'_{\alpha_l} \right), & \text{ if } \alpha \in \ ]\alpha_l,\alpha_{l+1}], 1\leq l\leq N-1
		\end{array}
		\right\} \\[5pt]
		&\leq \left\{
		\begin{array}{ll}
			d_H\left( u_{\alpha} , u_{\alpha_1} \right) + d_H\left( u_{\alpha_1} , u_{\alpha_1}' \right), & \text{ if } \alpha \in [0,\alpha_1] \\[5pt]
			d_H\left( u_{\alpha} , u_{\alpha_{l+1}} \right) + d_H\left( u_{\alpha_{l+1}} , u_{\alpha_{l+1}}' \right), & \text{ if } \alpha \in ]\alpha_l,\alpha_{l+1}], 1\leq l\leq N-1
		\end{array}
		\right\} < \eps.
	\end{align*}
	This only shows that $d_{\infty}(u,u')<\eps$ but, symmetrically, one can also check that $d_{\infty}(v,v')<\eps$. We claim that $(u',v') \in \Prox(\fuz{f},d_{\infty})$. Indeed, again by statement (b) of Proposition~\ref{Pro:Hausdorff}, we have that
	\begin{align*}
		d_{\infty}(\fuz{f}^n(u'),\fuz{f}^n(v')) &= \sup_{\alpha\in\II} d_H(\com{f}^n(u'_{\alpha}),\com{f}^n(v'_{\alpha})) = \max_{1 \leq l \leq N} d_H\left(\com{f}^n\left(\bigcup_{j \geq l} K_j\right),\com{f}^n\left(\bigcup_{j \geq l} L_j\right)\right) \\
		&= \max_{1 \leq l \leq N} d_H\left(\bigcup_{j \geq l} \com{f}^n(K_j),\bigcup_{j \geq l} \com{f}^n(L_j)\right) \leq \max_{1 \leq j \leq N} d_H\left( \com{f}^n(K_j),\com{f}^n(L_j) \right).
	\end{align*}
	The last expression equals $(d_H)_{(N)}\left( \com{f}_{(N)}^n(K_1,K_2,...,K_N) , \com{f}_{(N)}^n(L_1,L_2,...,L_N) \right)$ and the result follows.
	
	(c): Fix $\rho \in \{ d_{\infty} , d_{0} , d_{S} \}$ and assume that $\Prox(\fuz{f},\rho)$ is dense in $(\Fc(X)^2,\rho_{(2)})$. We must prove that the set $\Prox(\com{f},d_H)$ is dense in $(\Kc(X)^2,(d_H)_{(2)})$. To do so let $K,L \in \Kc(X)$ and $\eps>0$ be arbitrary but fixed, and let us find $K',L' \in \Kc(X)$ fulfilling the properties
	\begin{equation}\label{eq:proximal}
		\max\{ d_H(K,K') , d_H(L,L') \} < \eps \quad \text{ and } \quad (K',L') \text{ is proximal for $\com{f}$ and $d_H$}.
	\end{equation}
	Indeed, by assumption there exists a pair $(u,v) \in \Fc(X)\times\Fc(X)$ fulfilling that
	\[
	(u,v) \in \Bc_{\rho}(\chi_{K},\eps)\times\Bc_{\rho}(\chi_{L},\eps) \cap \Prox(\fuz{f},\rho).
	\]
	Let $K':=u_0$ and $L':=v_0$. Since $d_H(w_0,w_0') \leq \min\{ d_{S}(w,w') , d_{0}(w,w') , d_{\infty}(w,w') \} \leq \rho(w,w')$ for every pair of normal fuzzy sets $w,w' \in \Fc(X)$ by Proposition~\ref{Pro:fuzzy.metrics}, we have that
	\[
	\max\{ d_H(K,K') , d_H(L,L') \} \leq \max\{ \rho(\chi_{K},u) , \rho(\chi_{L},v) \} < \eps,
	\]
	but also that
	\[
	d_H\left(\com{f}^n(K'),\com{f}^n(L')\right) = d_H\left(\com{f}^n(u_0),\com{f}^n(v_0)\right) \leq \rho\left(\fuz{f}^n(u),\fuz{f}^n(v)\right) \quad \text{ for every } n \in \NN.
	\]
	As $(u,v) \in \Prox(\fuz{f},\rho)$, it follows that $(K',L') \in \Prox(\com{f},d_H)$ and \eqref{eq:proximal} is fulfilled.
\end{proof}

\subsection{Sensitivity and strong sensitivity}\label{SubSec_4.2:sensitivity}

Following \cite{AbrahamBiauCa2002_JMAA_chaotic,JardonSan2021_IJFS_sensitivity,SharmaNa2010_TA_inducing,WangWeiCamp2009_TA_sensitive}, given a continuous map $f:X\longrightarrow X$ acting on a metric space $(X,d)$:
\begin{enumerate}[--]
	\item We say that the system $(X,f)$ is {\em sensitive} (usually called {\em sensitive on initial conditions}), for the metric~$d$, if there exists some constant $\eps>0$ (called {\em sensitivity constant}) such that for each $x \in X$ and $\delta>0$ there are $y \in X$ and $n \in \NN$ fulfilling that $d(x,y)<\delta$ and that $d(f^n(x),f^n(y))>\eps$.
	
	\item We say that the dynamical system $(X,f)$ is {\em collectively sensitive}, for the metric~$d$, if there exists some constant $\eps>0$ (called {\em collective sensitivity constant}) such that given any finite sequence of distinct points $x_1,x_2,...,x_N \in X$ and any $\delta>0$ there are $y_1,y_2,...,y_N \in X$ and $n \in \NN$ for which:
	\begin{enumerate}[(CS1)]
		\item we have that $d(x_j,y_j)<\delta$ for all $1 \leq j \leq N$;
		
		\item there exists some $j_0 \in \{1,2,...,N\}$ such that either $d(f^n(x_{j_0}),f^n(y_j))>\eps$ for all $1 \leq j \leq N$ or $d(f^n(x_j),f^n(y_{j_0}))>\eps$ for all $1 \leq j \leq N$.
	\end{enumerate}
	
	\item We say that the dynamical system $(X,f)$ is {\em strongly sensitive}, for the metric $d$, if there exists some constant $\eps>0$ (called {\em strong sensitivity constant}) such that for each $x \in X$ and $\delta>0$ there is $n_0 \in \NN$ fulfilling that given any $n \geq n_0$ we can find $y_n \in X$ for which $d(x,y_n)<\delta$ and $d(f^n(x),f^n(y_n))>\eps$.
\end{enumerate}
The notion of sensitivity was included in the original definition of Devaney chaos and it is sometimes referred to as the ``butterfly effect'', meaning that small changes in initial conditions may produce large variations after some time. The concept of strong sensitivity was introduced in \cite{AbrahamBiauCa2002_JMAA_chaotic} and has been called \textit{cofinite sensitivity} in other references (see \cite{YaoZhu2023_MAT_syndetic}). In its turn, collective sensitivity was considered for the first time in \cite{WangWeiCamp2009_TA_sensitive}. The reader should note that we have the implications
\[
\textit{strong sensitivity $\Rightarrow$ collective sensitivity $\Rightarrow$ sensitivity}.
\]
Actually, that collective sensitivity implies sensitivity is direct, while the fact that the ``strong'' version implies the ``collective'' one can be deduced from \cite[Theorem~2.3]{WangWeiCamp2009_TA_sensitive} and \cite[Proposition~2.8]{SharmaNa2010_TA_inducing}, or from Theorem~\ref{The:sensitivity} below. The converse implications are false (see \cite[Example~2.11]{SharmaNa2010_TA_inducing} and \cite[Example~1]{YaoZhu2023_MAT_syndetic}).

In the context of dynamics on hyperspaces, these properties were studied for the system $(\Kc(X),\com{f})$ in \cite{SharmaNa2010_TA_inducing,WangWeiCamp2009_TA_sensitive}, and later for the system $(\Fc(X),\fuz{f})$ in \cite{JardonSan2021_IJFS_sensitivity}. The next theorem was essentially proved in such references, if one combines properly their results. However, since one of the proofs given in \cite{JardonSan2021_IJFS_sensitivity} is not completely clear, and because we include an additional result concerning the endograph metric, we find it convenient to restate the theorem in full and to re-prove the unclear part:

\begin{theorem}\label{The:sensitivity}
	Let $f:X\longrightarrow X$ be a continuous map acting on a metric space $(X,d)$. Hence:
	\begin{enumerate}[{\em(a)}]
		\item The following statements are equivalent:
		\begin{enumerate}[{\em(i)}]
			\item the system $(X,f)$ is collectively sensitive for the metric $d$;
			
			\item the system $(\Kc(X),\com{f})$ is sensitive for the metric $d_H$;
			
			\item the system $(\Fc_{\infty}(X),\fuz{f})$ is sensitive;
			
			\item the system $(\Fc_{0}(X),\fuz{f})$ is sensitive;
			
			\item the system $(\Fc_{S}(X),\fuz{f})$ is sensitive.
		\end{enumerate}
		
		\item If the system $(\Kc(X),\com{f})$ is (resp.\ strongly) sensitive for $d_H$, then so is $(X,f)$ for the metric $d$.
		
		\item The following statements are equivalent:
		\begin{enumerate}[{\em(i)}]
			\item the system $(X,f)$ is strongly sensitive for the metric $d$;
			
			\item the system $(\Kc(X),\com{f})$ is strongly sensitive for the metric $d_H$;
			
			\item the system $(\Fc_{\infty}(X),\fuz{f})$ is strongly sensitive;
			
			\item the system $(\Fc_{0}(X),\fuz{f})$ is strongly sensitive;
			
			\item the system $(\Fc_{S}(X),\fuz{f})$ is strongly sensitive.
		\end{enumerate}
		
		\item If the system $(\Fc_{E}(X),\fuz{f})$ is (resp.\ strongly) sensitive, then so is $(\Kc(X),\com{f})$ for the metric $d_H$.
	\end{enumerate}
\end{theorem}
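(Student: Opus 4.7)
The plan is to observe that parts~(a), (b), (c) follow by combining the arguments of \cite{WangWeiCamp2009_TA_sensitive,SharmaNa2010_TA_inducing,JardonSan2021_IJFS_sensitivity} with the tools of Section~\ref{Sec_2:notation}, while part~(d) is the genuinely new contribution and should be proved directly using Lemmas~\ref{Lem:key} and~\ref{Lem:key2}. I would give a concise sketch of parts~(a)--(c) and then focus on a detailed proof of part~(d).

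For part~(b), if $(L,n)$ is a (strong) sensitivity witness for $(\Kc(X),\com{f})$ applied at the singleton $K=\{x\}$, then $d_H(\{x\},L)<\delta$ forces $L\subset\Bc_d(x,\delta)$ and $d_H(\com{f}^n(\{x\}),\com{f}^n(L))>\eps$ produces some $y\in L$ with $d(f^n(x),f^n(y))>\eps$; the same $n_0$ works in the strong version. For parts~(a) and~(c), the converse direction (i) $\Rightarrow$ (ii) covers $K\in\Kc(X)$ with a $\delta/3$-net $\{x_1,\dots,x_N\}\subset K$, invokes collective (resp.\ strong) sensitivity to produce $y_1,\dots,y_N$ and $n$ satisfying (CS1)--(CS2), sets $L:=\{y_1,\dots,y_N\}$, and reads off $d_H(\com{f}^n(K),\com{f}^n(L))>\eps$ from the (CS2) witness $j_0$; the reverse (ii) $\Rightarrow$ (i) specialises sensitivity to $K=\{x_1,\dots,x_N\}$ with radius below one third of the pairwise separation, and uses the witness point in $\com{f}^n(K)$ or $\com{f}^n(L)$ to pin down $j_0$ (possibly after relabeling). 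The equivalences with $(\Fc_\rho(X),\fuz{f})$ for $\rho\in\{d_\infty,d_0,d_S\}$ follow in one direction from the isometric embedding $\iota_2\colon K\mapsto\chi_K$ (Proposition~\ref{Pro:fuzzy.metrics}(d)), and in the other by approximating any $u\in\Fc(X)$ via Lemma~\ref{Lem:eps.pisos} by a piecewise-constant fuzzy set $\max_l\alpha_l\chi_{K_l}$ and importing sensitivity from $\Kc(X)$; this is the argument of \cite{JardonSan2021_IJFS_sensitivity} that would be rewritten in full.

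For part~(d), assume $(\Fc_E(X),\fuz{f})$ is (resp.\ strongly) sensitive with constant $\eps$; after shrinking $\eps$ if necessary we may suppose $\eps<\tfrac{1}{2}$. Given $K\in\Kc(X)$ and $\delta>0$, set
\[
\delta'':=\min\bigl\{\delta,\tfrac{\eps}{2},\tfrac{1}{4}\bigr\}
\]
and apply sensitivity at $\chi_K$ with radius $\delta''$, obtaining $u\in\Fc(X)$ (resp.\ $n_0\in\NN$ together with, for each $n\geq n_0$, some $u=u_n\in\Fc(X)$) such that $d_{E}(\chi_K,u)<\delta''$ and $d_{E}(\fuz{f}^n(\chi_K),\fuz{f}^n(u))>\eps$. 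Apply Lemma~\ref{Lem:key2}(c) to the pair $(\com{f}^n(K),\fuz{f}^n(u))$ with threshold $\tfrac{\eps}{2}<\min\bigl\{d_E(\fuz{f}^n(\chi_K),\fuz{f}^n(u)),\tfrac{1}{2}\bigr\}$: there exists $\alpha\in\,]\tfrac{\eps}{2},1-\tfrac{\eps}{2}]$ with
\[
d_H\bigl(\com{f}^n(K),[\fuz{f}^n(u)]_\alpha\bigr)>\tfrac{\eps}{2},
\]
and Proposition~\ref{Pro:fuz{f}}(a) rewrites $[\fuz{f}^n(u)]_\alpha$ as $\com{f}^n(u_\alpha)$. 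Setting $L:=u_\alpha\in\Kc(X)$, the inequalities $\delta''\leq\tfrac{\eps}{2}<\alpha\leq 1-\tfrac{\eps}{2}\leq 1-\delta''$ place $\alpha$ inside $]\delta'',1-\delta'']$, so Lemma~\ref{Lem:key} yields $d_H(K,L)\leq\delta''\leq\delta$. Therefore $(L,n)$ witnesses (strong) sensitivity of $(\Kc(X),\com{f})$ with constant $\tfrac{\eps}{2}$, and in the strong variant with the same $n_0$.

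The main obstacle I anticipate is the level-range bookkeeping in part~(d): Lemma~\ref{Lem:key} controls $d_H(K,u_\alpha)$ only on $]\delta'',1-\delta'']$, whereas Lemma~\ref{Lem:key2}(c) locates a large-$d_H$ level only on $]\tfrac{\eps}{2},1-\tfrac{\eps}{2}]$; forcing $\delta''\leq\tfrac{\eps}{2}$ is exactly what makes these two intervals compatible and simultaneously motivates the preliminary reduction to $\eps<\tfrac{1}{2}$. The remaining technical burden lies in cleanly rewriting the piecewise-level transfer argument for parts~(a) and~(c), where Lemma~\ref{Lem:eps.pisos} must be invoked to approximate an arbitrary $u\in\Fc(X)$ by a finite tower of level sets before the sensitivity of $(\Kc(X),\com{f})$ can be imported into the fuzzy setting.
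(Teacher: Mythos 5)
Your part~(d) is essentially the paper's own argument (the paper runs the same level-set extraction, applying Lemma~\ref{Lem:key2}(c) with threshold $\eps$ itself after insisting $\delta<\eps$, so it keeps the constant $\eps$ where you get $\eps/2$; both are fine), and your direct singleton argument for part~(b) is correct, if more self-contained than the paper's citation of \cite{SharmaNa2010_TA_inducing}. The overall structure --- defer (a) and (c) largely to \cite{WangWeiCamp2009_TA_sensitive,SharmaNa2010_TA_inducing,JardonSan2021_IJFS_sensitivity} and give a new Lemma-\ref{Lem:key}/\ref{Lem:key2}-based argument for the endograph metric --- also matches the paper.

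There is, however, one genuine gap, and it sits exactly at the point the paper singles out. In part~(a) you dispose of the implications (iii),(iv),(v) $\Rightarrow$ (ii) by saying they ``follow from the isometric embedding $\iota_2\colon K\mapsto\chi_K$.'' They do not: sensitivity of $(\Fc_\rho(X),\fuz{f})$ at $\chi_K$ produces a perturbation $v\in\Fc(X)$ with $\rho(\chi_K,v)<\delta$ and $\rho(\fuz{f}^n(\chi_K),\fuz{f}^n(v))>\eps$, and this $v$ need not be a characteristic function, so the embedding gives you nothing --- you must still manufacture a compact set that is $d_H$-close to $K$ and whose $n$-th image is $d_H$-far from $\com{f}^n(K)$. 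This is precisely the justification given in \cite[Theorem~3.11]{JardonSan2021_IJFS_sensitivity} that the paper explicitly flags as ``not clear'' and re-proves; the whole point of the paper's proof of (a) is to replace it by the level-set extraction you yourself carry out for $d_E$ in part~(d). The repair is available to you with no new ideas: for $\rho\in\{d_\infty,d_0\}$ use Proposition~\ref{Pro:fuzzy.metrics}(c) and the definition of $d_\infty$ to find a level $\alpha$ with $d_H(\com{f}^n(K),\com{f}^n(v_\alpha))>\eps$ while $d_H(K,v_\alpha)<\delta$ for every $\alpha$; for $\rho=d_S$ use Lemma~\ref{Lem:key2}(b) to locate such an $\alpha\in[0,1-\eps]$ and Lemma~\ref{Lem:key2}(a) to bound $d_H(K,v_\alpha)$; then set $L:=v_\alpha$. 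A secondary, smaller issue: in part~(c) your sketch of (i) $\Rightarrow$ (ii) reuses the (CS1)--(CS2) collective-sensitivity template, but strong sensitivity has no (CS2)-type clause, so that implication needs the separate argument of \cite[Propositions~2.3 and 2.8]{SharmaNa2010_TA_inducing} rather than the net-plus-(CS2) scheme.
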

\begin{proof}
	(a): The equivalence (i) $\Leftrightarrow$ (ii) is \cite[Theorem~2.3]{WangWeiCamp2009_TA_sensitive}. The implications (ii) $\Rightarrow$ (iii), (iv), (v) were proved in \cite[Theorem~3.10]{JardonSan2021_IJFS_sensitivity} with a correct proof. However, for the implications (iii), (iv), (v) $\Rightarrow$ (ii), the authors in \cite[Theorem~3.11]{JardonSan2021_IJFS_sensitivity} stated that such implications are a consequence of the equalities
	\[
	d_{S}(\chi_{\{x\}},u) = d_{0}(\chi_{\{x\}},u) = d_{\infty}(\chi_{\{x\}},u) = d_H(\{x\},u_0) \quad \text{ for all } x \in X \text{ and } u \in \Fc(X).
	\]
	Although these equalities are true (see Proposition~\ref{Pro:fuzzy.metrics}), the fact that (iii), (iv), (v) $\Rightarrow$ (ii) follow from them is not clear. Indeed, recall that for the system $(\Kc(X),\com{f})$ to be sensitive, it is not enough that the system $(X,f)$ is sensitive (see \cite[Example~2.11]{SharmaNa2010_TA_inducing}). Thus, we consider convenient to include a proof of the following \textit{fact} (which uses both Lemmas~\ref{Lem:key}~and~\ref{Lem:key2} in an essential way):
	\begin{enumerate}[--]
		\item \textit{If any of the extended dynamical systems $(\Fc_{\infty}(X),\fuz{f})$, $(\Fc_{0}(X),\fuz{f})$, $(\Fc_{S}(X),\fuz{f})$ or $(\Fc_{E}(X),\fuz{f})$ is sensitive, then so is $(\Kc(X),\com{f})$ for the metric $d_H$.}
	\end{enumerate}
	\textit{Proof of the fact}: Fix $\rho \in \{ d_{\infty} , d_{0} , d_{S} , d_{E} \}$, assume that $(\Fc(X),\fuz{f})$ is sensitive for $\rho$, and let $\eps>0$ be a sensitivity constant, i.e.\ we are assuming that for any $u \in \Fc(X)$ and any $\delta>0$ there are $v \in \Fc(X)$ and $n \in \NN$ fulfilling that $\rho(u,v)<\delta$ and that $\rho(\fuz{f}^n(u),\fuz{f}^n(v))>\eps$. Without loss of generality assume that $0<\eps<\tfrac{1}{2}$. We claim that $\eps$ is a sensitivity constant for $(\Kc(X),\com{f})$ and $d_H$. Indeed, given any compact set $K \in \Kc(X)$ and any positive value $0<\delta<\eps$, if we set $u:= \chi_K$, by assumption there exist a normal fuzzy set $v \in \Fc(X)$ and a positive integer $n \in \NN$ fulfilling that
	\[
	\rho(\chi_K,v)<\delta \quad \text{ and } \quad \rho(\chi_{\com{f}^n(K)},\fuz{f}^n(v)) = \rho(\fuz{f}^n(\chi_K),\fuz{f}^n(v))>\eps.
	\]
	Recalling that $\com{f}^n(v_{\alpha})=f^n(v_{\alpha})=[\fuz{f}^n(v)]_{\alpha}$ for every $\alpha \in \II$, we have three cases:
	\begin{enumerate}[--]
		\item \textbf{Case 1}: \textit{We have that $\rho \in \{d_{\infty},d_{0}\}$}. In this case, by statement (c) of Proposition~\ref{Pro:fuzzy.metrics} we have that $d_{0}(\chi_K,v) = d_{\infty}(\chi_K,v)<\delta$ and that $d_{0}(\chi_{\com{f}^n(K)},\fuz{f}^n(v)) = d_{\infty}(\chi_{\com{f}^n(K)},\fuz{f}^n(v))>\eps$. It follows that
		\[
		d_H(\com{f}^n(K),\com{f}^n(v_{\alpha}))>\eps \quad \text{ for some } \alpha \in \II,
		\]
		and that $d_H(K,v_{\alpha})<\delta$ for such $\alpha \in \II$ by the definition of $d_{\infty}$. The result follows since $v_{\alpha} \in \Kc(X)$.
					
		\item \textbf{Case 2}: \textit{We have that $\rho = d_{S}$}. In this case, part (b) of Lemma~\ref{Lem:key2} shows that there exists some level $\alpha \in [0,1-\eps] \subset [0,1-\delta]$ such that $d_H(\com{f}^n(K),\com{f}^n(v_{\alpha}))>\eps$. In its turn, part (a) of Lemma~\ref{Lem:key2} shows that $d_H(K,v_{\alpha})<\delta$. The result follows since $v_{\alpha} \in \Kc(X)$.
			
		\item \textbf{Case 3}: \textit{We have that $\rho = d_{E}$}. In this case, part (c) of Lemma~\ref{Lem:key2} shows that there exists some level $\alpha \in \ ]\eps,1-\eps] \subset \ ]\delta,1-\delta]$ such that $d_H(\com{f}^n(K),\com{f}^n(v_{\alpha}))>\eps$. An application of Lemma~\ref{Lem:key} shows now that $d_H(K,v_{\alpha})<\delta$. The result follows since $v_{\alpha} \in \Kc(X)$.
	\end{enumerate}
	We deduce that $(\Kc(X),\com{f})$ admits $\eps$ as a sensitivity constant for the metric $d_H$, as required. With these arguments, the equivalences stated in (a) are now proved in full generality.
	
	(b): A proof for this statement can be found in \cite[Propositions~2.1~and~2.8]{SharmaNa2010_TA_inducing}.
	
	(c): The equivalence (i) $\Leftrightarrow$ (ii) is \cite[Propositions~2.3~and~2.8]{SharmaNa2010_TA_inducing}. That (i) $\Rightarrow$ (iii), (iv), (v) was proved in \cite[Theorem~3.7]{JardonSan2021_IJFS_sensitivity}, and the implications (iii), (iv), (v) $\Rightarrow$ (i) were proved in \cite[Proposition~3.3]{JardonSan2021_IJFS_sensitivity}.
	
	(d): We have already proved the sensitivity part in statement (a). For strong sensitivity one can fix a general integer $n_0 \in \NN$, and repeat the arguments above (see \textbf{Case 3}) for each $n \geq n_0$.
\end{proof}

With Theorems~\ref{The:proximality} and~\ref{The:sensitivity} at hand, we are now ready to proceed to Section~\ref{Sec_5:Cantor}.

\section{Cantor-dense Li-Yorke chaos}\label{Sec_5:Cantor}

In this section we show that, although Question~\ref{Ques:Li-Yorke.transfer} admits a positive answer by Example~\ref{Exa_1:main}, under certain conditions on the system $(X,f)$ one can guarantee that some Li-Yorke chaotic-type properties transfer from $(\Fc(X),\fuz{f})$ to $(\Kc(X),\com{f})$. In particular, following the theory from \cite{JiangLi2025_JMAA_chaos}, we introduce the concept of {\em Cantor-dense Li-Yorke chaos} and we prove that it transfers from $(\Fc(X),\fuz{f})$ to $(\Kc(X),\com{f})$ whenever $(X,d)$ is a complete metric space; see Theorem~\ref{The:cantor}. By adding further assumptions, such as the linearity of the underlying dynamical system, we even obtain certain equivalences with the original system; see Theorem~\ref{The:operators}. Throughout this section, Theorems~\ref{The:proximality} and~\ref{The:sensitivity} will play a crucial role.

\subsection{Another strengthened variant of Li-Yorke chaos}

We will use a dynamical notion stronger than U-LYC but that has been recently considered in the context of dynamical systems on completely metrizable spaces (see \cite{JiangLi2025_JMAA_chaos}). From now on, a {\em Cantor set} will be a set homeomorphic to the classical Cantor ternary set of the unit interval.

\begin{definition}\label{Def:Cantor-dense}
	Let $f:X\longrightarrow X$ be a continuous map acting on a metric space $(X,d)$. We say that the dynamical system $(X,f)$ is {\em Cantor-densely Li-Yorke chaotic}, for the metric $d$, if there exists some positive value $\eps>0$ such that for any sequence $(U_j)_{j\in\NN}$ of non-empty open subsets of $X$ there is a sequence of Cantor sets $(K_j)_{j\in\NN}$ in $X$ fulfilling the properties
	\[
	K_j \subset U_j \text{ for all } j \in \NN \quad \text{ and } \quad \bigcup_{j\in\NN} K_j \text{ is a Li-Yorke $\eps$-scrambled set for $f$~and~$d$.}
	\]
	For short, we will write that $(X,f)$ is {\em CD-LYC}.
\end{definition}

Since every Cantor set is uncountable, it is clear that CD-LYC $\Rightarrow$ U-LYC $\Rightarrow$ LYC. Moreover, it has been recently proved, using the celebrated {\em Mycielski theorem} (see \cite[Theorem~2.2]{JiangLi2025_JMAA_chaos}), that
\begin{enumerate}[--]
	\item \cite[Proposition~3.5]{JiangLi2025_JMAA_chaos}: \textit{Given a continuous map $f:X\longrightarrow X$ acting on a \textbf{completely metrizable} space $(X,d)$, then the dynamical system $(X,f)$ is CD-LYC for the metric $d$ if and only if $(X,f)$ has a dense proximal relation and it is a sensitive dynamical system for the metric $d$}.
\end{enumerate}
Using the results obtained in Section~\ref{Sec_4:sensitivity} on proximality and sensitivity we obtain that: 

\begin{theorem}\label{The:cantor}
	Let $f:X\longrightarrow X$ be a continuous map on a complete metric space $(X,d)$. If any of the systems $(\Fc_{\infty}(X),\fuz{f})$, $(\Fc_{0}(X),\fuz{f})$ or $(\Fc_{S}(X),\fuz{f})$ is CD-LYC, then so is $(\Kc(X),\com{f})$ for the metric $d_H$.
\end{theorem}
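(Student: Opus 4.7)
The plan is to apply the Jiang--Li characterization \cite[Proposition~3.5]{JiangLi2025_JMAA_chaos} (recalled in the excerpt just above the theorem) twice, bridging the two sides via Theorems~\ref{The:proximality}(c) and~\ref{The:sensitivity}(a). Fix $\rho \in \{d_{\infty}, d_{0}, d_{S}\}$ and assume that $(\Fc_\rho(X), \fuz{f})$ is CD-LYC. Since $(X,d)$ is complete, the hyperspace $(\Kc(X), d_H)$ is complete (classical Hausdorff-metric fact) and the fuzzy metric space $(\Fc_\rho(X), \rho)$ is also complete (standard result from the theory of Spaces of Fuzzy Sets; see the references in Section~\ref{Sec_2:notation}). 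Thus the Jiang--Li criterion can be invoked on both sides.

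Applying the criterion to $(\Fc_\rho(X), \fuz{f})$, we obtain simultaneously that (i) $(\Fc_\rho(X), \fuz{f})$ has a dense proximal relation for $\rho$, and that (ii) $(\Fc_\rho(X), \fuz{f})$ is sensitive for $\rho$. From (i) and statement (c) of Theorem~\ref{The:proximality} it follows that $(\Kc(X), \com{f})$ has a dense proximal relation for $d_H$. From (ii) and the equivalences in statement (a) of Theorem~\ref{The:sensitivity}, we obtain that $(\Kc(X), \com{f})$ is sensitive for $d_H$. Therefore $(\Kc(X), \com{f})$ satisfies both hypotheses of the Jiang--Li characterization on the complete metric space $(\Kc(X), d_H)$, and a second application of that characterization yields that $(\Kc(X), \com{f})$ is CD-LYC, as required.

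The proof is essentially a clean assembly of three ingredients: the Jiang--Li criterion on completely metrizable spaces, Theorem~\ref{The:proximality}(c), and the equivalence in Theorem~\ref{The:sensitivity}(a). The only point that requires care is the completeness of the fuzzy metric spaces $(\Fc_\rho(X), \rho)$ for $\rho \in \{d_{\infty}, d_{0}, d_{S}\}$, which is the technical prerequisite for using the Jiang--Li criterion on the fuzzy side; this is well documented in the literature already cited in the excerpt, so it amounts to invoking standard references. Note also why the endograph metric $d_{E}$ is not included in the hypothesis: Theorem~\ref{The:proximality}(c) explicitly covers only $d_{\infty}, d_{0}, d_{S}$, and likewise the equivalence (a) of Theorem~\ref{The:sensitivity} does not extend to $d_{E}$ (only the one-way implication in (d) is available), so the argument above does not carry through verbatim in the $d_{E}$ case.
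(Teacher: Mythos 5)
Your overall architecture coincides with the paper's on the $\Kc(X)$ side: extract a dense proximal relation and sensitivity for the fuzzy system, push them down to $(\Kc(X),\com{f})$ via Theorem~\ref{The:proximality}(c) and Theorem~\ref{The:sensitivity}(a), and then conclude by applying \cite[Proposition~3.5]{JiangLi2025_JMAA_chaos} on the complete space $(\Kc(X),d_H)$. The difference, and the gap, is in how you obtain the two ingredients on the fuzzy side. You invoke the Jiang--Li characterization there as well, which forces you to assert that $(\Fc(X),\rho)$ is completely metrizable for every $\rho\in\{d_{\infty},d_{0},d_{S}\}$ and to dismiss this as ``standard''. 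For $d_{\infty}$ and $d_{0}$ this is indeed documented (the paper cites \cite[Theorem~3.8]{JooKim2000_FSS_the} and \cite[Proposition~4.6]{Huang2022_FSS_some} in the proof of Theorem~\ref{The:operators}), but for the sendograph metric $d_{S}$ no such reference is given anywhere in the paper, and the authors visibly avoid relying on it: in Theorem~\ref{The:operators}(b) the conclusions for $(\Fc_{S}(X),\fuz{T})$ and $(\Fc_{E}(X),\fuz{T})$ are deliberately downgraded from CD-LYC to U-LYC precisely because complete metrizability is only invoked for $(\Kc(X),d_H)$, $\Fc_{\infty}(X)$ and $\Fc_{0}(X)$. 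So as written, your argument does not cover the $d_{S}$ case of the statement.

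The gap is easily repaired, and the repair is what the paper actually does: the implication you need on the fuzzy side is the \emph{trivial} direction, which holds in any metric space without completeness. If $(Y,g)$ is CD-LYC with constant $\eps$, then given non-empty open sets $U_1,U_2$ one obtains Cantor sets $K_1\subset U_1$, $K_2\subset U_2$ whose union is LY $\eps$-scrambled; picking distinct points $x'\in K_1$, $y'\in K_2$ gives a proximal pair, so the proximal relation is dense, and taking $U_1=U_2=\Bc_d(x,\delta)$ gives two $\delta$-close points whose orbits separate by almost $\eps$ infinitely often, so the system is sensitive. Only the converse direction of \cite[Proposition~3.5]{JiangLi2025_JMAA_chaos} (the one used on $(\Kc(X),d_H)$, where completeness is available) requires the Mycielski machinery. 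Replacing your first application of the Jiang--Li criterion by this direct observation removes the dependence on the completeness of $(\Fc(X),d_{S})$ and makes the proof valid for all three metrics.
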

\begin{proof}
	It is almost trivial to check that, if a dynamical system is CD-LYC, then it has a dense proximal relation and it is sensitive. Thus, the hypothesis of this result together with statement~(c) of~Theorem~\ref{The:proximality} and statement (a) of~Theorem~\ref{The:sensitivity} imply that the system $(\Kc(X),\com{f})$ has a dense proximal relation and that it is sensitive for the metric $d_H$. Finally, since $(\Kc(X),d_H)$ is complete when $(X,d)$ is complete (see for instance \cite[Exercise~2.15]{IllanesNad1999_book_hyperspaces}), the result follows from~\cite[Proposition~3.5]{JiangLi2025_JMAA_chaos}.
\end{proof}

From Theorem~\ref{The:cantor} we deduce that Question~\ref{Ques:Li-Yorke.transfer} may have a general negative answer for certain strengthened versions of LYC. Let us show that similar arguments apply to linear dynamical systems.

\subsection{The case of infinite-dimensional linear operators}

As a subfield of Topological Dynamics one finds the so-called {\em Linear Dynamics}, where the dynamical systems under consideration are usually denoted by $(X,T)$ and consist of a {\em Fr\'echet space} $(X,d)$ and a {\em continuous linear operator}, or just {\em operator} for short, $T:X\longrightarrow X$. Recall that a Fr\'echet space is a locally convex topological vector space $X$, admitting a translation-invariant metric $d$ for which $(X,d)$ is complete. Throughout this section, given a Fr\'echet space $(X,d)$ we denote by~$0_X$ the zero-vector in~$X$, and given any subset $E \subset X$ we denote by $\lspan(E)$ its linear span. In addition, given any operator $T:X\longrightarrow X$ and any vector $x \in X$ we will denote its {\em $T$-orbit} by $\orb_T(x) := \{ T^n(x) \ ; \ n \in \NN_0 \}$, and we will denote the {\em set of vectors with a Null Sub-$T$-orbit} by
\[
NS(T) := \{ x \in X \ ; \ (T^n(x))_n \text{ admits a $0_X$-convergent subsequence} \}.
\]
The linear system $(X,T)$ is called {\em equicontinuous} if for each $\eps>0$ there exists some $\delta>0$ fulfilling that $d(x,y)<\delta$ implies $d(T^n(x),T^n(y))<\eps$ for all $n \in \NN$. Since the metric $d$ is translation-invariant, the previous condition is equivalent to the fact that for each $\eps>0$ there exists some $\delta>0$ fulfilling that $d(x,0_X)<\delta$ implies $d(T^n(x),0_X)<\eps$ for all $n \in \NN$. In Functional Analysis, this property is usually called {\em power-boundedness}, but we will keep the word ``{\em equicontinuous}'' for the rest of the paper.

When the Fr\'echet space $X$ is finite-dimensional, then the dynamics of any operator $T:X\longrightarrow X$ are completely determined by the Jordan canonical form of $T$ and, in particular, Li-Yorke chaos can not occur. However, when $X$ is infinite-dimensional the situation becomes substantially richer, and there exist several works (such as \cite{BernardesBoMuPe2015_ETDS_Li-Yorke,JiangLi2025_JMAA_chaos,Lopez2025_MJOM_frequently}) in which chaos has been studied for infinite-dimensional linear systems. In the context of dynamics on hyperspaces, various Li-Yorke chaotic-type properties for the respective extensions $(\Kc(X),\com{T})$ and $(\Fc(X),\fuz{T})$ were explored in \cite{BernardesPeRo2017_IEOT_set-valued,MartinezPeRo2021_MAT_chaos}. In this subsection we extend \cite[Theorem~3.2]{BernardesPeRo2017_IEOT_set-valued} and \cite[Theorem~3]{MartinezPeRo2021_MAT_chaos} by considering the metrics $d_{S}$~and~$d_{E}$, but also including the notion of CD-LYC in the obtained equivalences; see Theorem~\ref{The:operators}. To do so we strongly rely on the theory developed so far in this paper, together with the next two auxiliary (possibly folklore) lemmas:

\begin{lemma}\label{Lem:equicontinuous}
	Let $T:X\longrightarrow X$ be an operator on a Fr\'echet space $(X,d)$. Assume that $\lspan(NS(T))$ is dense in $X$ and that $(X,T)$ is equicontinuous. Then, for all $K \in \Kc(X)$ and all $u \in \Fc(X)$ we have
	\[
	\lim_{n\to\infty} d_H\left(\com{T}^n(K),\{0_X\}\right) = 0 \quad \text{ and } \quad \lim_{n\to\infty} d_{\infty}\left(\fuz{T}^n(u),\chi_{\{0_X\}}\right) = 0.
	\]
\end{lemma}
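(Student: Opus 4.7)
The plan is to first prove that the full orbit of every vector converges to $0_X$, then upgrade pointwise convergence to uniform convergence on compact sets, and finally reduce the fuzzy statement to the compact one via the level-set description of $\fuz{T}^n(u)$.

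First I would show that $T^n(y) \to 0_X$ for every $y \in NS(T)$, not merely along a subsequence. Given $y \in NS(T)$ there is a subsequence $(T^{n_k}(y))_k$ converging to $0_X$; given $\eps>0$ pick $\delta>0$ from the equicontinuity hypothesis and $k$ large so that $d(T^{n_k}(y),0_X)<\delta$. Using translation invariance of $d$ together with equicontinuity applied to the vector $T^{n_k}(y)$, we get $d(T^{n_k+m}(y),0_X)<\eps$ for every $m\in\NN_0$, so $T^n(y)\to 0_X$. By linearity this extends to every $y \in \lspan(NS(T))$. Next, fix an arbitrary $x\in X$ and $\eps>0$, take the $\delta>0$ from equicontinuity, and use density to choose $y \in \lspan(NS(T))$ with $d(x-y,0_X)=d(x,y)<\delta$. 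Then $d(T^n(x),T^n(y))=d(T^n(x-y),0_X)<\eps$ for every $n\in\NN$, while $T^n(y)\to0_X$, so $d(T^n(x),0_X)<2\eps$ for all sufficiently large $n$. Hence $T^n(x)\to 0_X$ for every $x\in X$.

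Next I would promote this pointwise convergence to uniform convergence on any $K\in\Kc(X)$. By equicontinuity choose $\delta>0$ so that $d(x,x')<\delta$ implies $d(T^n(x),T^n(x'))<\eps$ for all $n$. Using compactness, cover $K$ by finitely many balls $\Bc_d(x_j,\delta)$ with $x_1,\dots,x_N\in K$; by the previous step there exists $n_0$ with $d(T^n(x_j),0_X)<\eps$ for all $j$ and all $n\geq n_0$. For an arbitrary $x\in K$, choosing $j$ with $d(x,x_j)<\delta$ gives $d(T^n(x),0_X)\leq d(T^n(x),T^n(x_j))+d(T^n(x_j),0_X)<2\eps$ for all $n\geq n_0$. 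Thus $\sup_{x\in K}d(T^n(x),0_X)\to 0$, which coincides with $d_H(\com{T}^n(K),\{0_X\})$ (the reverse Hausdorff term $\inf_{y\in\com{T}^n(K)}d(0_X,y)$ is dominated by the forward one), proving the first limit.

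Finally, for the fuzzy statement, recall from Proposition~\ref{Pro:fuz{f}}(a) that $[\fuz{T}^n(u)]_\alpha=\com{T}^n(u_\alpha)$ for every $\alpha\in\II$, and that $[\chi_{\{0_X\}}]_\alpha=\{0_X\}$ for every $\alpha\in\II$. Since $u_\alpha\subset u_0$ for every $\alpha$, we have $\com{T}^n(u_\alpha)\subset \com{T}^n(u_0)$ and therefore
\[
d_H\bigl(\com{T}^n(u_\alpha),\{0_X\}\bigr)=\sup_{y\in u_\alpha}d(T^n(y),0_X)\leq \sup_{y\in u_0}d(T^n(y),0_X)=d_H\bigl(\com{T}^n(u_0),\{0_X\}\bigr).
\]
Taking the supremum over $\alpha\in\II$ yields $d_{\infty}(\fuz{T}^n(u),\chi_{\{0_X\}})\leq d_H(\com{T}^n(u_0),\{0_X\})$, and since $u_0\in\Kc(X)$ the right-hand side tends to $0$ by the first part of the lemma. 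The main subtlety, and the step most worth double-checking, is the bootstrap from ``null subsequence at a vector'' to ``orbit converges for every vector''; once equicontinuity is combined with density and translation invariance this goes through cleanly, and everything else reduces to standard compactness arguments and the level-set formula for $\fuz{T}$.
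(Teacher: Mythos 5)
Your proof is correct and follows essentially the same route as the paper: upgrade subsequential null convergence to full orbit convergence via equicontinuity (the paper cites \cite{JiangLi2025_JMAA_chaos} for this step, which you prove directly), use density plus translation invariance to get $NS(T)=X$, pass to uniform convergence on compacta, and reduce the fuzzy claim to the compact case through the level sets of $\fuz{T}^n(u)$. The only cosmetic difference is that the paper invokes part (b) of Proposition~\ref{Pro:fuzzy.metrics} for the last step where you use the level-set formula of Proposition~\ref{Pro:fuz{f}} directly; these are equivalent.
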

\begin{proof}
	When $(X,T)$ is equicontinuous, using the arguments in \cite[Lemma~3.2]{JiangLi2025_JMAA_chaos} it almost trivial to check that every orbit with a $0_X$-convergent subsequence is indeed full $0_X$-convergent, that is
	\[
	NS(T) = \left\{ x \in X \ ; \ \lim_{n\to\infty} d(T^n(x),0_X) = 0 \right\}.
	\]
	Such a set is a linear subspace, so that $NS(T)=\lspan(NS(T))$ and $NS(T)$ is dense in $X$. We claim that $NS(T)=X$: actually, note that given any $x \in X$ and any $y \in NS(T)$ we have that
	\[
	0 \leq \limsup_{n\to\infty} d(T^n(x),0_X) \leq \limsup_{n\to\infty} \left( d(T^n(x),T^n(y)) + d(T^n(y),0_X) \right) = \limsup_{n\to\infty} d(T^n(x),T^n(y)),
	\]
	which is arbitrarily small by the equicontinuity of $(X,T)$ and the density of $NS(T)$. Now, let us fix any $K \in \Kc(X)$ and let $\eps>0$. Using that $(X,T)$ is equicontinuous, that $K \subset NS(T)$, and that $K$~is compact, it is not hard to find $n_{K,\eps} \in \NN$ such that $d(T^n(x),0_X) < \eps$ for all $x \in K$ and $n \geq n_{K,\eps}$.~It follows that $d_H(\com{T}^n(K),\{0_X\})<\eps$ for all $n \geq n_{K,\eps}$, so that $d_H(\com{T}^n(K),\{0_X\})$ converges to $0$. In its turn, given any $u \in \Fc(X)$ we have that $u_0 \in \Kc(X)$. Therefore, repeating the argument for the compact set $K=u_0$ together with statement (b) of Proposition~\ref{Pro:fuzzy.metrics} completes the proof.
\end{proof}

\begin{lemma}\label{Lem:collective}
	Let $T:X\longrightarrow X$ be an operator acting on a Fr\'echet space $(X,d)$. Then, the following statements are equivalent:
	\begin{enumerate}[{\em(i)}]
		\item the system $(X,T)$ is not equicontinuous;
		
		\item the system $(X,T)$ is sensitive for the metric $d$;
		
		\item the system $(X,T)$ is collectively sensitive for the metric $d$. 
	\end{enumerate}
\end{lemma}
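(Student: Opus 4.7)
The plan is as follows. The implications (iii) $\Rightarrow$ (ii) and (ii) $\Rightarrow$ (i) are essentially immediate: specializing the definition of collective sensitivity to $N=1$ reduces (CS1) and (CS2) to exactly the defining condition of sensitivity, and the contrapositive of (ii) $\Rightarrow$ (i) is that if $(X,T)$ is equicontinuous, then for every $\varepsilon>0$ one finds $\delta>0$ such that $d(x,y)<\delta$ forces $d(T^n x,T^n y)<\varepsilon$ uniformly in $n$, precluding any sensitivity constant. The real content of the lemma is therefore (i) $\Rightarrow$ (iii).

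For (i) $\Rightarrow$ (iii), my plan is to exploit linearity together with translation invariance of $d$ to produce a single perturbation vector $z\in X$, of arbitrarily small $d$-norm, that simultaneously witnesses collective sensitivity for any prescribed finite configuration $x_1,\ldots,x_N$. By translation invariance and linearity, non-equicontinuity at any point is equivalent to non-equicontinuity at $0$: there exists $\varepsilon_0>0$ such that for every $\delta>0$ one can find $z\in X$ with $d(z,0)<\delta$ and $n\in\NN$ with $d(T^n z,0)>\varepsilon_0$. I would then invoke the Banach--Steinhaus theorem in the Fr\'echet space $(X,d)$ to extract a vector $x^*\in X$ and a continuous seminorm $p$ from a generating family for the topology of $X$ such that $\sup_n p(T^n x^*)=+\infty$; this upgrade from the fixed threshold $\varepsilon_0$ to an unbounded amplification along a sub-sequence of indices is the crucial ingredient.

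Given distinct $x_1,\ldots,x_N\in X$ and $\delta>0$, I would set $y_j:=x_j+z$ for each $j$, where $z:=\alpha x^*$ and both $\alpha>0$ and $n\in\NN$ are chosen so that $d(\alpha x^*,0)<\delta$ while $|\alpha|\,p(T^n x^*)>M+\max_{1\le j\le N}p\bigl(T^n(x_1-x_j)\bigr)$ for a prescribed margin $M>0$. Property (CS1) follows at once from translation invariance, since $d(x_j,y_j)=d(z,0)<\delta$. For (CS2) in case (A) with $j_0:=1$, translation invariance and the seminorm triangle inequality give
\[
p\bigl(T^n x_1 - T^n y_j\bigr) \;=\; p\bigl(T^n(x_1-x_j) - T^n z\bigr) \;\ge\; p(T^n z) - p\bigl(T^n(x_1-x_j)\bigr) \;>\; M,
\]
and transferring this bound back to $d$ through the standard lower estimate $d(v,0)\ge c\,p(v)/(1+p(v))$ inherent to the generating family of seminorms for $d$ (for some constant $c>0$) then yields $d(T^n x_1, T^n y_j)>\varepsilon'$ uniformly in $j$, with a universal constant $\varepsilon'>0$ depending only on the Fr\'echet-space structure of $X$ and not on the configuration.

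The main obstacle I expect is reconciling ``$d(z,0)<\delta$'' with ``$p(T^n z)$ arbitrarily large'': the bare non-equicontinuity bound $d(T^n z,0)>\varepsilon_0$ fixes a single threshold, whereas the orbit values $p(T^n(x_1-x_j))$ of the given $x_j$'s could a priori grow unboundedly. The resolution rests on the Banach--Steinhaus amplification above: since the set of vectors with unbounded $p$-orbit is residual by Baire category, one has enough flexibility in choosing $x^*$ and then in scaling $\alpha\to 0$ so that $|\alpha|\,p(T^n x^*)$ outpaces the fixed sequence $p(T^n(x_1-x_j))$ along a suitable subsequence of $n$'s while still keeping $d(\alpha x^*,0)$ below $\delta$. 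Linearity is indispensable here, since for a general continuous nonlinear map non-equicontinuity is strictly weaker than collective sensitivity, as Theorem~\ref{The:sensitivity}(a) implicitly reveals.
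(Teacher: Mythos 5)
Your reduction of the lemma to the implication (i) $\Rightarrow$ (iii) is legitimate, and the Banach--Steinhaus step (non-equicontinuity of $\{T^n\}_{n\in\NN}$ yields a vector $x^*$ and a continuous seminorm $p$ with $\sup_n p(T^nx^*)=\infty$) is correct. The gap is in how you use it. You perturb \emph{every} $x_j$ by the same vector $z=\alpha x^*$ and then need, for some admissible $\alpha$ and $n$, the inequality $|\alpha|\,p(T^nx^*)>M+\max_{1\le j\le N}p\bigl(T^n(x_1-x_j)\bigr)$. This cannot be achieved in general: take $x_1=0_X$ and $x_2=x^*$, so that $p\bigl(T^n(x_1-x_2)\bigr)=p(T^nx^*)$ and the required inequality reads $(|\alpha|-1)\,p(T^nx^*)>M$, forcing $|\alpha|>1$, which is incompatible with $d(\alpha x^*,0_X)<\delta$ for small $\delta$. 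The appeal to residuality of the set of vectors with unbounded $p$-orbit does not repair this: the configuration $x_1,\dots,x_N$ may be handed to you after $x^*$ is fixed, and in any case nothing guarantees the existence of an $x^*$ whose orbit, after the scaling $\alpha\to 0$ needed for (CS1), still dominates the orbits of all the differences $x_1-x_j$.

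The paper's proof sidesteps the domination problem entirely, and the fix is its selective perturbation: with $\eps$ a sensitivity constant and $\tilde x$, $n$ chosen so that $d(\tilde x,0_X)<\delta$ and $d(T^n\tilde x,0_X)>\eps$, set $y_j:=x_j+\tilde x$ \emph{only} for those $j$ with $d(T^nx_1,T^nx_j)\le\tfrac{\eps}{2}$, and $y_j:=x_j$ otherwise. For the perturbed indices, translation invariance and the triangle inequality give $d(T^nx_1,T^ny_j)\ge d(T^n\tilde x,0_X)-d(T^nx_1,T^nx_j)>\tfrac{\eps}{2}$; for the unperturbed ones the bound $>\tfrac{\eps}{2}$ holds by the case assumption. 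Thus $\tfrac{\eps}{2}$ is a collective sensitivity constant with $j_0=1$, no unbounded amplification (hence no Banach--Steinhaus and no passage through seminorms) is needed, and the argument naturally proves (ii) $\Rightarrow$ (iii), with (i) $\Leftrightarrow$ (ii) handled separately as a known fact for operators on Fr\'echet spaces.
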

\begin{proof}
	It is well-known that (i)~$\Leftrightarrow$~(ii); see for instance \cite[Theorem~3.11]{JiangLi2025_JMAA_chaos}. In addition, by the definition of sensitivity and collective sensitivity, it is clear that (iii) $\Rightarrow$ (ii). To complete the proof we only need to check that (ii) $\Rightarrow$ (iii). Assume (ii), let $\eps>0$ be a sensitivity constant for~$(X,T)$, and let us check that $\tfrac{\eps}{2}$ is a collective sensitivity constant for $(X,T)$. Actually, given any finite sequence of distinct points $x_1,x_2,...,x_N \in X$ and any $\delta>0$, by the sensitivity of $(X,T)$ we can find $\tilde{x} \in X$ and some $n \in \NN$ fulfilling that $d(\tilde{x},0_X)<\delta$ and that $d(T^n(\tilde{x}),0_X)>\eps$. Let
	\[
	y_j :=
	\begin{cases}
		x_j+\tilde{x} & \text{ if } d(T^n(x_1),T^n(x_j)) \leq \tfrac{\eps}{2}, \\[5pt]
		x_j & \text{ if } d(T^n(x_1),T^n(x_j)) > \tfrac{\eps}{2},
	\end{cases}
	\quad \text{ for each } 1 \leq j \leq N.
	\]
	For the points $y_1,y_2,...,y_N \in X$, note that $d(x_j,y_j) \leq \max\{ d(x_j,x_j) , d(x_j,x_j+\tilde{x}) \} = d(\tilde{x},0_X) < \delta$ for all $1 \leq j \leq N$. Moreover, for each $1 \leq j \leq N$ we have that
	\[
	d(T^n(x_1),T^n(y_j)) =
	\begin{cases}
		d(T^n(x_1),T^n(x_j+\tilde{x})) & \text{ if } d(T^n(x_1),T^n(x_j)) \leq \tfrac{\eps}{2}, \\[5pt]
		d(T^n(x_1),T^n(x_j)) & \text{ if } d(T^n(x_1),T^n(x_j)) > \tfrac{\eps}{2}.
	\end{cases}
	\]
	Since $d(T^n(x_1),T^n(x_j+\tilde{x})) \geq d(T^n(\tilde{x}),0_X)-d(T^n(x_1),T^n(x_j))$ by translation-invariance and the triangle inequality, we deduce that $d(T^n(x_1),T^n(y_j)) > \tfrac{\eps}{2}$ for all $1 \leq j \leq N$.
\end{proof}

We finally obtain the previously announced generalization of \cite[Theorem~3.2]{BernardesPeRo2017_IEOT_set-valued} and \cite[Theorem~3]{MartinezPeRo2021_MAT_chaos}:

\begin{theorem}\label{The:operators}
	Let $T:X\longrightarrow X$ be an operator acting on a Fr\'echet space $(X,d)$. Hence:
	\begin{enumerate}[{\em(a)}]
		\item If $\lspan(NS(T))$ is dense in $X$, then the following statements are equivalent:
		\begin{enumerate}[{\em(i)}]
			\item the system $(X,T)$ is not equicontinuous;
			
			\item the system $(X,T)$ is U-LYC for the metric $d$;
			
			\item the system $(\Kc(X),\com{T})$ is U-LYC for the metric $d_H$;
			
			\item the system $(\Fc_{\infty}(X),\fuz{T})$ is U-LYC;
			
			\item the system $(\Fc_{0}(X),\fuz{T})$ is U-LYC;
			
			\item the system $(\Fc_{S}(X),\fuz{T})$ is U-LYC;
			
			\item the system $(\Fc_{E}(X),\fuz{T})$ is U-LYC.
		\end{enumerate}
		
		\item If the set $X_0 := \{ x \in X \ ; \ T^{n_k}x \to 0_X \text{ as } k \to \infty \}$ is dense in $X$ for some increasing sequence of positive integers $(n_k)_{k\in\NN} \in \NN^{\NN}$, then the following statements are equivalent:
		\begin{enumerate}[{\em(i)}]
			\item the system $(X,T)$ is not equicontinuous;
			
			\item the system $(X,T)$ is CD-LYC for the metric $d$;
			
			\item the system $(\Kc(X),\com{T})$ is CD-LYC for the metric $d_H$;
			
			\item the system $(\Fc_{\infty}(X),\fuz{T})$ is CD-LYC;
			
			\item the system $(\Fc_{0}(X),\fuz{T})$ is CD-LYC;
			
			\item the system $(\Fc_{S}(X),\fuz{T})$ is U-LYC;
			
			\item the system $(\Fc_{E}(X),\fuz{T})$ is U-LYC.
		\end{enumerate}
	\end{enumerate}
\end{theorem}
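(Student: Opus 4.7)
The plan is to prove both parts of Theorem~\ref{The:operators} via the common cycle (i) $\Rightarrow$ (ii) $\Rightarrow$ (iii) $\Rightarrow$ (iv), (v), (vi), (vii) $\Rightarrow$ (i), handling the reverse implications to (i) uniformly. For the reverse direction in both parts, suppose $(X,T)$ is equicontinuous. Since each hypothesis implies $\lspan(NS(T))$ is dense, Lemma~\ref{Lem:equicontinuous} forces every orbit of $\fuz{T}$ to converge to $\chi_{\{0_X\}}$ in $d_{\infty}$, and hence also in the weaker metrics $d_0, d_S, d_E$ by Proposition~\ref{Pro:fuzzy.metrics}; analogously every orbit of $\com{T}$ converges to $\{0_X\}$ in $d_H$ and every orbit of $T$ converges to $0_X$ in $d$. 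The triangle inequality then collapses the $\limsup$ of distances between any two orbits to zero in each of the seven spaces, precluding the existence of any Li-Yorke pair and therefore any form of U-LYC or CD-LYC. This settles (ii), (iii), (iv), (v), (vi), (vii) $\Rightarrow$ (i) in both parts.

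For the forward direction in part (a), the transfers (ii) $\Rightarrow$ (iii) $\Rightarrow$ (iv), (v), (vi), (vii) are immediate from Theorem~\ref{The:scrambled}: parts (f) and (g) propagate U-LYC from $(X,T)$ to $(\Kc(X),\com{T})$ and then into $(\Fc_{\infty}(X),\fuz{T})$, $(\Fc_{0}(X),\fuz{T})$, $(\Fc_{S}(X),\fuz{T})$, while part (h) pushes U-LYC into $(\Fc_{E}(X),\fuz{T})$, since U-LYC is explicitly in the list of properties it transfers for the endograph metric. The substantive step is (i) $\Rightarrow$ (ii), which I would prove by standard linear-dynamics arguments in the spirit of \cite{BernardesBoMuPe2015_ETDS_Li-Yorke}: non-equicontinuity combined with density of $\lspan(NS(T))$ allows one to construct via a diagonal argument a vector $z \in X$ with $\liminf_n d(T^n z, 0_X) = 0$ and $\limsup_n d(T^n z, 0_X) \geq \eps$ for some $\eps>0$; translation invariance of $d$ and the linearity of $T$ then yield an uncountable LY $\eps$-scrambled set, for instance by a Mycielski-type selection applied to a suitable symmetric $G_\delta$ subrelation of $X \times X$.

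For part (b) the strengthened hypothesis---a common sequence $(n_k)$ witnessing density of $X_0$---is precisely what upgrades U-LYC to CD-LYC. For every $N \in \NN$, coordinatewise approximation by elements of $X_0$ produces a dense proximal relation in $(X^N, T_{(N)})$ along this common sequence. Theorem~\ref{The:proximality}(a) transfers this to $(\Kc(X)^N, \com{T}_{(N)})$ for every $N$, and Theorem~\ref{The:proximality}(b) transfers it to each of the four fuzzy systems. Coupling these dense proximal relations with sensitivity (Lemma~\ref{Lem:collective} together with Theorem~\ref{The:sensitivity}(a)) and with the completeness of $(X,d)$, $(\Kc(X),d_H)$, $(\Fc_{\infty}(X),d_{\infty})$ and $(\Fc_{0}(X),d_0)$, \cite[Proposition~3.5]{JiangLi2025_JMAA_chaos} delivers CD-LYC in (ii), (iii), (iv), (v). The remaining U-LYC assertions (vi), (vii) follow from (iii) via parts (g), (h) of Theorem~\ref{The:scrambled}.

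The main obstacle is the construction of the vector $z$ in (i) $\Rightarrow$ (ii) of part (a). Under the weak hypothesis that only $\lspan(NS(T))$ is dense, the proximal subsequences realizing $T^{n_k} w \to 0_X$ depend on the chosen $w \in NS(T)$, so a careful diagonal argument along a fast-growing auxiliary sequence of $NS$-times must reconcile the asynchronous proximal behavior coming from different $NS$-vectors with the escape-from-zero behavior provided by non-equicontinuity; this is precisely the technical heart of classical linear-dynamics chaos constructions. In part (b) the existence of a common sequence $(n_k)$ bypasses this diagonal difficulty entirely, which is exactly why the conclusion can be upgraded from U-LYC to CD-LYC.
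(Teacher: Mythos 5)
Your proposal is correct and follows essentially the same route as the paper: reverse implications via Lemma~\ref{Lem:equicontinuous}, forward transfers via Theorem~\ref{The:scrambled}, and part (b) via $X_0^{2N}\subset\Prox(T_{(N)},d_{(N)})$, Theorem~\ref{The:proximality}, Lemma~\ref{Lem:collective}, Theorem~\ref{The:sensitivity}(a), completeness of the hyperspaces, and \cite[Proposition~3.5]{JiangLi2025_JMAA_chaos}. The only real difference is the step you flag as the ``technical heart'' of part (a): the paper does not carry out any diagonal or Mycielski construction there, but simply invokes \cite[Theorems~9~and~15]{BernardesBoMuPe2015_ETDS_Li-Yorke} to obtain an irregular vector $x$ and then takes the scrambled set to be the scalar multiples $\{\lambda x \ ; \ \lambda\in\ ]0,1[\,\}$, using translation invariance of $d$; so the difficulty you anticipate is already resolved in the cited reference.
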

\begin{proof}
	(a): Assume (i). Using \cite[Theorem~15]{BernardesBoMuPe2015_ETDS_Li-Yorke} we have that $(X,T)$ is LYC. Thus, by \cite[Theorem~9]{BernardesBoMuPe2015_ETDS_Li-Yorke} we have that $(X,T)$ admits an irregular vector, i.e.\ there exists some $x \in X$ fulfilling that
	\[
	(T^n(x))_{n\in\NN} \text{ is Fr\'echet-unbounded} \quad \text{ while } \quad \liminf_{n\to\infty} d(T^n(x),0_X) = 0.
	\]
	It follows that $S := \{ \lambda x \ ; \ \lambda \in \ ]0,1[ \}$ is an uncountable LY $\eps$-scrambled set for $f$ and $d$, whenever we choose a small enough $\eps>0$ to verify that $d$ is not bounded by $\eps$. We have obtained that (i)~$\Rightarrow$~(ii).~The implications (ii) $\Rightarrow$ (iii), (iv), (v), (vi), (vii) follow from parts (f), (g) and (h) of Theorem~\ref{The:scrambled}. Finally, the implications (iii), (iv), (v), (vi), (vii) $\Rightarrow$ (i) follow from Lemma~\ref{Lem:equicontinuous}. Actually, assume that either the system $(\Kc(X),\com{f})$ or the system $(\Fc(X),\fuz{T})$ admits a LY pair for $d_H$ or for some $\rho \in \{ d_{\infty} , d_{0} , d_{S} , d_{E} \}$, and denote such LY pair by $(K,L) \in \Kc(X)\times\Kc(X)$ or by $(u,v) \in \Fc(X)\times\Fc(X)$ respectively. Thus, if the system $(X,T)$ was equicontinuous, an application of Lemma~\ref{Lem:equicontinuous} would imply the contradiction
	\[
	0 < \limsup_{n\to\infty} d_H(\com{T}^n(K),\com{T}^n(L)) \leq \limsup_{n\to\infty} \left( d_H(\com{T}^n(K),\{0_X\}) + d_H(\com{T}^n(L),\{0_X\}) \right) = 0,
	\]
	or, by statement (a) of Proposition~\ref{Pro:fuzzy.metrics}, the contradiction
	\[
	0 < \limsup_{n\to\infty} \rho(\fuz{T}^n(u),\fuz{T}^n(v)) \leq \limsup_{n\to\infty} \left( d_{\infty}\left(\fuz{T}^n(u),\chi_{\{0_X\}}\right) + d_{\infty}\left(\fuz{T}^n(v),\chi_{\{0_X\}}\right) \right) = 0.
	\]
	
	(b): Since we have that $X_0 \subset NS(T)$, the set $\lspan(NS(T))$ is also dense in this part (b). Thus, the equivalences (i) $\Leftrightarrow$ (vi) $\Leftrightarrow$ (vii) follow from the already proved part (a) of this Theorem~\ref{The:operators}, but also the implications (iii), (iv), (v) $\Rightarrow$ (i) because the notion of CD-LYC is stronger than U-LYC.
	
	In its turn, since it can be easily checked that $X_0^2 \subset \Prox(f,d)$, the system $(X,T)$ has a dense proximal relation and (i)~$\Leftrightarrow$~(ii) holds by Lemma~\ref{Lem:collective} and \cite[Proposition~3.5]{JiangLi2025_JMAA_chaos}. To conclude we will prove the implications (i) $\Rightarrow$ (iii), (iv), (v). Actually, if~(i) holds:
	\begin{enumerate}[(1)]
		\item It can be checked that $X_0^{2N} \subset \Prox(f_{(N)},d_{(N)})$ for all $N \in \NN$, so that $(X^N,f_{(N)})$ has a dense proximal relation for every $N \in \NN$. Using statements (a) and (b) of Theorem~\ref{The:proximality} we deduce that $(\Kc(X),\com{T})$ for $d_H$, and the systems $(\Fc_{\infty}(X),\fuz{T})$ and $(\Fc_{0}(X),\fuz{T})$, have a dense proximal relation.
		
		\item We have that $(X,T)$ is collectively sensitive for $d$ by Lemma~\ref{Lem:collective} so that $(\Kc(X),\com{T})$ for $d_H$, and the systems $(\Fc_{\infty}(X),\fuz{T})$ and $(\Fc_{0}(X),\fuz{T})$, are sensitive by statement (a) of Theorem~\ref{The:sensitivity}.
	\end{enumerate}
	Statements (iii), (iv) and (vi) follow now, because the metric spaces $(\Kc(X),d_H)$, $\Fc_{\infty}(X)$ and $\Fc_{0}(X)$ are completely metrizable (see \cite[Exercise~2.15]{IllanesNad1999_book_hyperspaces}, \cite[Theorem~3.8]{JooKim2000_FSS_the} and \cite[Proposition~4.6]{Huang2022_FSS_some}), by applying \cite[Proposition~3.5]{JiangLi2025_JMAA_chaos} to each of the systems $(\Kc(X),\com{T})$, $(\Fc_{\infty}(X),\fuz{T})$ and $(\Fc_{0}(X),\fuz{T})$.
\end{proof}

From the proof of Theorem~\ref{The:operators} one should note that, in both parts (a) and (b), we could have added the equivalent condition ``{\em the system $(X,T)$ is LYC for the metric $d$}''. Thus, Theorem~\ref{The:operators} is a generalization of \cite[Theorem~3.2]{BernardesPeRo2017_IEOT_set-valued} and \cite[Theorem~3]{MartinezPeRo2021_MAT_chaos}. In particular, Theorem~\ref{The:operators} applies to the examples of weighted shifts considered in \cite[Section~3]{BernardesPeRo2017_IEOT_set-valued}. As a new application, let us recall that an operator $T:X\longrightarrow X$ is called {\em hypercyclic} when it admits a dense $T$-orbit. We have the next:

\begin{corollary}
	Let $T:X\longrightarrow X$ be an operator acting on a Fr\'echet space $(X,d)$. If the operator $T$ is hypercyclic, then the dynamical systems $(\Kc(X),\com{T})$ for $d_H$, $(\Fc_{\infty}(X),\fuz{T})$ and $(\Fc_{0}(X),\fuz{T})$, are CD-LYC, and the dynamical systems $(\Fc_{S}(X),\fuz{T})$ and $(\Fc_{E}(X),\fuz{T})$ are U-LYC. 
\end{corollary}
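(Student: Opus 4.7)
The plan is to apply both parts of Theorem~\ref{The:operators}. Since $T$ is hypercyclic, the set $HC(T)\subseteq X$ of hypercyclic vectors is a dense $G_\delta$, and for any $x\in HC(T)$ the density of $\orb_T(x)$ places $0_X$ in its closure, which yields an increasing subsequence $(n_k^x)\subseteq \NN$ with $T^{n_k^x}x\to 0_X$. Hence $HC(T)\subseteq NS(T)$, so $\lspan(NS(T))$ is dense in $X$. Moreover, any dense orbit in the infinite-dimensional Fr\'echet space $X$ must be unbounded, which rules out the equicontinuity of $(X,T)$. An application of Theorem~\ref{The:operators}(a) then forces U-LYC for each of the five extensions $(\Kc(X),\com T)$, $(\Fc_{\infty}(X),\fuz T)$, $(\Fc_{0}(X),\fuz T)$, $(\Fc_{S}(X),\fuz T)$ and $(\Fc_{E}(X),\fuz T)$, which already settles the corollary's U-LYC claim for $(\Fc_{S}(X),\fuz T)$ and $(\Fc_{E}(X),\fuz T)$.

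To strengthen the conclusion to CD-LYC on the remaining three systems, I would invoke Theorem~\ref{The:operators}(b), whose extra hypothesis demands a single increasing sequence $(n_k)\in\NN^{\NN}$ for which the set $X_0:=\{x\in X:T^{n_k}x\to 0_X\}$ is dense in $X$. My plan is to fix any hypercyclic vector $x_0\in HC(T)$ and to select $(n_k)$ so that $T^{n_k}x_0\to 0_X$. The crucial observation is that each iterate $T^m$ commutes with $T^{n_k}$ and is continuous, so that for every $m\in\NN_0$ one has
\[
T^{n_k}(T^m x_0)=T^m(T^{n_k}x_0)\xrightarrow[k\to\infty]{}T^m(0_X)=0_X,
\]
forcing $\orb_T(x_0)\subseteq X_0$. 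Since $\orb_T(x_0)$ is itself dense in $X$ by hypercyclicity of $x_0$, so is $X_0$, and Theorem~\ref{The:operators}(b) then delivers CD-LYC for $(\Kc(X),\com T)$, $(\Fc_{\infty}(X),\fuz T)$ and $(\Fc_{0}(X),\fuz T)$, completing the proof.

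The only genuinely non-trivial step is producing a single sequence $(n_k)$ along which a dense set of vectors is null; once one notices that the density of $\orb_T(x_0)$ can be transferred into $X_0$ by pushing forward through the continuous iterates $T^m$, the rest of the argument reduces to a mechanical invocation of Theorem~\ref{The:operators}.
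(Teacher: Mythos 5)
Your proposal is correct and follows essentially the same route as the paper: fix a hypercyclic vector $x_0$, extract $(n_k)$ with $T^{n_k}x_0\to 0_X$, push this convergence through the continuous iterates $T^m$ to see that the dense orbit $\orb_T(x_0)$ lies in $X_0$, note that hypercyclicity excludes equicontinuity, and invoke Theorem~\ref{The:operators}(b). The preliminary detour through part (a) is redundant (part (b) already yields the U-LYC conclusions for $(\Fc_{S}(X),\fuz{T})$ and $(\Fc_{E}(X),\fuz{T})$) but harmless.
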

\begin{proof}
	Let $x_0 \in X$ be a vector with dense $T$-orbit. Let $(n_k)_{k\in\NN}$ such that $T^{n_k}(x_0) \to 0_X$ as $k \to \infty$ and note that, by the continuity of $T$, we have that $X_0 := \{ T^n(x_0) \ ; \ n \in \NN \}$ is a dense set in $X$ fulfilling that $T^{n_k}(x) \to 0_X$ as $k \to \infty$ for every $x \in X_0$. Since hypercyclicity implies that $(X,T)$ is not equicontinuous, this corollary follows automatically from statement (b) of Theorem~\ref{The:operators}.
\end{proof}

Using the theory developed in \cite{JiangLi2025_JMAA_chaos}, similar results may be obtained when $(X,T)$ is only assumed to be an endomorphism of a completely metrizable group. We leave the details to the reader.

\section{Conclusions}\label{Sec_6:conclusions}

In this paper we have carried out a detailed study of the main variants of Li-Yorke chaos for the fuzzy extension $(\Fc(X),\fuz{f})$. Along the paper we have shown that these chaotic-type properties transfer naturally from $(X,f)$ to $(\Kc(X),\com{f})$ and from $(\Kc(X),\com{f})$ to $(\Fc(X),\fuz{f})$, while the converse directions are either false or require particular conditions (such as completeness or linearity). Combining the results obtained here with those in \cite{Lopez2026_IJFS_topological-I,Lopez2026_JIA_topological-II}, we have now achieved a systematic description of the dynamical behaviour of the fuzzy extension $(\Fc(X),\fuz{f})$ as defined in this framework, that is, with $\Fc(X)$ denoting the {\em space of normal fuzzy sets}. These three works together offer a coherent theory unifying most of the dynamical properties that have been considered for these fuzzy systems in the literature.

More precisely, let us recall that the main notions of chaos studied in the literature for the fuzzy dynamical system $(\Fc(X),\fuz{f})$ are \textit{Devaney chaos}, \textit{topological $\Ac$-transitivity}, the \textit{specification property}, the \textit{shadowing property} and the notions of \textit{Li-Yorke chaos}. Until \cite{Lopez2026_IJFS_topological-I,Lopez2026_JIA_topological-II} and this paper, these notions have always been studied for the metrics $d_{\infty}$ and $d_{0}$ (see \cite{BartollMaPeRo2022_AXI_orbit,MartinezPeRo2021_MAT_chaos}). As a summary:
\begin{enumerate}[--]
	\item In \cite{Lopez2026_IJFS_topological-I} we proved that \textit{being Devaney chaotic} is equivalent for all the systems $(\Kc(X),\com{f})$, $(\Fc_{\infty}(X),\fuz{f})$, $(\Fc_{0}(X),\fuz{f})$, $(\Fc_{S}(X),\fuz{f})$, and $(\Fc_{E}(X),\fuz{f})$.
	
	\item In \cite{Lopez2026_IJFS_topological-I} we also proved that \textit{being topologically $\Ac$-transitive} is equivalent for all the systems $(\Kc(X),\com{f})$, $(\Fc_{\infty}(X),\fuz{f})$, $(\Fc_{0}(X),\fuz{f})$, $(\Fc_{S}(X),\fuz{f})$, and $(\Fc_{E}(X),\fuz{f})$;
	
	\item In \cite{Lopez2026_IJFS_topological-I} we further proved that \textit{having the specification property} is equivalent for all the systems $(\Kc(X),\com{f})$, $(\Fc_{\infty}(X),\fuz{f})$, $(\Fc_{0}(X),\fuz{f})$, $(\Fc_{S}(X),\fuz{f})$, and $(\Fc_{E}(X),\fuz{f})$.
	
	\item In \cite{Lopez2026_JIA_topological-II} we proved that \textit{having the finite shadowing property} is equivalent for $(X,d)$, $(\Kc(X),\com{f})$, and $(\Fc_{\infty}(X),\fuz{f})$, \textbf{but not for the systems} $(\Fc_{0}(X),\fuz{f})$, $(\Fc_{S}(X),\fuz{f})$, and $(\Fc_{E}(X),\fuz{f})$.
	
	\item In this paper we proved that Li-Yorke chaos and several variants (such as DC1, \DC{1}, DC2, \DC{2} and DC3) transfer from $(X,f)$ to $(\Kc(X),\com{f})$, and from $(\Kc(X),\com{f})$ to $(\Fc(X),\fuz{f})$ endowed with the metrics $d_{\infty}$, $d_{0}$, $d_{S}$ and $d_{E}$, but that the converse implications are not always true.
\end{enumerate}
Hence, our research has led us to reveal new phenomena such as the existence of strong Li-Yorke-type chaotic behaviour for $(\Fc(X),\fuz{f})$ under very weak assumptions on the original system $(X,f)$ or the hyperextension $(\Kc(X),\com{f})$. It follows that the results obtained in this paper significantly extend those in \cite{MartinezPeRo2021_MAT_chaos}, completing the metric framework initiated in \cite{Lopez2026_IJFS_topological-I,Lopez2026_JIA_topological-II}.

As for possible directions for future research, several natural possibilities arise:
\begin{enumerate}[--]
	\item One could investigate other dynamical properties beyond those considered in \cite{Lopez2026_IJFS_topological-I,Lopez2026_JIA_topological-II} or here (such as the existence of invariant measures or other notions, see for instance \cite{AlvarezLoPe2025_FSS_recurrence,Lopez2024_RIMA_invariant}).
	
	\item One could study alternative metrics on $\Fc(X)$, as suggested in \cite[Section~5]{Lopez2026_IJFS_topological-I} and \cite[Section~6]{Lopez2026_JIA_topological-II}, and analyze whether the results obtained here remain valid or need to be reformulated.
	
	\item One could extend the setting to uniform (and not necessarily metrizable) spaces; see \cite{JardonSanSan2023_IJFS_fuzzy,JardonSanSan2026_TA_transitivity}.
	
	\item One could explore other spaces of fuzzy sets, in which the elements included are not necessarily normal, as discussed in \cite[Section~5]{Lopez2026_IJFS_topological-I}.
\end{enumerate}

In addition, we leave here a list of open problems coming from \cite[Pages 74, 75 and 77]{JardonSan2021_IJFS_sensitivity} and from the results that we have obtained in Section~\ref{Sec_4:sensitivity}:

\begin{problem}\label{Prob:last}
	Let $f:X\longrightarrow X$ be a continuous map acting on a metric space $(X,d)$:
	\begin{enumerate}[(a)]
		\item Suppose that the system $(X,f)$ is strongly sensitive for $d$. Is $(\Fc_{E}(X),\fuz{f})$ strongly sensitive?
		
		\item Suppose that the system $(\Kc(X),\com{f})$ is sensitive for $d_H$. Is $(\Fc_{E}(X),\fuz{f})$ sensitive?
		
		\item Suppose that the system $(\Fc_{E}(X),\fuz{f})$ has a dense proximal relation. Has the system $(\Kc(X),\com{f})$ a dense proximal relation for the metric $d_H$?
	\end{enumerate}
\end{problem}

The relevance of Problem~\ref{Prob:last} lies in the possibility of further clarifying the interplay between sensitivity-type properties and the proximal behaviour across the different systems considered in this work. In particular, a positive answer to parts~(a) and~(b) would allow us to strengthen Theorem~\ref{The:sensitivity} by obtaining a more complete and satisfactory equivalence between the notions of sensitivity for the original system $(X,f)$, the system $(\Kc(X),\com{f})$, and the fuzzy system $(\Fc_{E}(X),\fuz{f})$. On the other hand, a positive solution to part~(c) would make it possible to extend Theorems~\ref{The:proximality}~and~\ref{The:cantor} by replacing its current hypotheses with conditions formulated only on the fuzzy system. Notice that, the extension of Theorem~\ref{The:cantor} would be particularly significant, as it would show that checking dynamical behaviour in the weaker endograph metric $d_{E}$ is sufficient to guarantee strong forms of chaos in the hyperspace system $(\Kc(X),\com{f})$. Therefore, resolving these problems would contribute to a deeper understanding of how dynamical properties are transferred and detected among these associated systems, and would enhance the applicability of the fuzzy framework as a tool to study complex dynamics.

\section*{Funding}

The second author was partially supported by
\[
\text{MCIN/AEI/10.13039/501100011033/FEDER, UE, Project PID2022-139449NB-I00.}
\]

\section*{Acknowledgments}

The authors want to thank Salud Bartoll, Nilson Bernardes Jr., F\'elix Mart\'inez-Jim\'enez, Alfred Peris, and Francisco Rodenas, for numerous readings of the manuscript. The second author is also grateful to \'Angel Calder\'on-Villalobos and Manuel Sanchis for insightful discussions on the topic.

{\footnotesize

}

{\footnotesize
$\ $\\
	
\textsc{Illych Alvarez}: Escuela Superior Polit\'ecnica del Litoral, Facultad de Ciencias Naturales y Matem\'aticas, Km.\ 30.5 V\'ia Perimetral, Guayaquil, Ecuador. e-mail: ialvarez@espol.edu.ec
	
\textsc{Antoni L\'opez-Mart\'inez}: Universitat Polit\`ecnica de Val\`encia, Institut Universitari de Matem\`atica Pura i Aplicada, Edifici 8E, 4a planta, 46022 Val\`encia, Spain. e-mail: alopezmartinez@mat.upv.es
}

\end{document}